\DeclareSymbolFont{extraup}{U}{zavm}{m}{n}
\DeclareMathSymbol{\vardiamond}{\mathalpha}{extraup}{87}
\numberwithin{equation}{section}
\declaretheorem[numberwithin = section]{theorem}
\declaretheorem[sibling = theorem]{lemma}
\declaretheorem[sibling = theorem]{proposition}
\declaretheorem[sibling = theorem,
                style = definition,
                qed = $\vardiamond$]{example}
\declaretheorem[sibling = theorem, style = remark, qed = $\spadesuit$]{remark}
\declaretheorem[style = remark, numbered = no]{acknowledgements}
\newcommand{\locus}[1]{rank-\(#1\)-locus}
\newcommand{\loci}[1]{rank-\(#1\)-loci}
\newcommand{\point}[1]{rank-\(#1\)-point}
\newcommand{\points}[1]{rank-\(#1\)-points}
\newcommand{\node}[1]{rank-\(#1\)-node}
\newcommand{\nodes}[1]{rank-\(#1\)-nodes}
\newcommand{\quadric}[1]{rank-\(#1\)-quadric}
\newcommand{\quadrics}[1]{rank-\(#1\)-quadrics}
\newcommand{\singularity}[1]{rank-\(#1\)-singularity}
\newcommand{\C}{\mathbb{C}}
\newcommand{\PP}{\mathbb{P}}
\newcommand{\GG}{\mathbb{G}}
\newcommand{\V}{\mathcal{V}\mkern-1mu}
\newcommand{\ybf}{\mathbf{y}}
\newcommand{\df}{\mathfrak{d}}
\newcommand{\dash}{\textthreequartersemdash\xspace}
\DeclareMathOperator{\Sing}{Sing}
\DeclareMathOperator{\rank}{rank}
\DeclareMathOperator{\corank}{corank}
    \renewcommand{\@secnumfont}{\bfseries}
    \renewcommand\section{\@startsection{section}{1}
        \z@{.7\linespacing\@plus\linespacing}{0.5\linespacing}
        {\normalfont\scshape\bfseries\centering}}
\title{Rational Quartic Symmetroids}
\author{Martin Hels{\o}}
\address{Martin Hels{\o}, University of Oslo, Postboks 1053, Blindern, 0316 Oslo, Norway}
\email{\href{mailto:martibhe@math.uio.no}{martibhe@math.uio.no}}
\subjclass[2010]{14M12,14J26}
\keywords{Rational surfaces, determinantal varieties, linear systems of quadrics}
\begin{document}

\begin{abstract}
    We classify rational, irreducible quartic symmetroids in projective 3-space. They are either singular along a line or a smooth conic section, or they have a triple point or a tacnode.
\end{abstract}

\maketitle

\section{Introduction}

\noindent A \emph{symmetroid} $S \subset \C\PP^n$ is a hypersurface $\V(F)$, whose defining polynomial~$F$ can be written as the determinant of a symmetric matrix of linear forms.  That is, $F = \det(A)$, where
\begin{align}
    \label{eq:matrix-representation}
    A \coloneqq A(x) \coloneqq A_0x_0 + \cdots + A_nx_n,
\end{align}
and the $A_i$ are symmetric $(d \times d)$-matrices with entries in $\C$. The degree of $S$ is then $d$. In this paper, we characterise families of rational symmetroids of degree $4$ in $\C\PP^3\mkern-4mu$.

Since $S$ is identified with the symmetrix matrix $A$, we are able to talk about the rank of $S$ at a point. By evaluating, every point $x \in \PP^n$ is associated the symmetric matrix~$A(x)$. The \emph{rank} and \emph{corank} of $x$ are defined as $\rank A(x)$ and $\corank A(x)$, respectively. The symmetroid $S$ consists of the points with corank at least $1$. The \emph{\locus{k}} of~$S$ is the set of points with rank less than or equal to $k$. This is precisely the zero locus of the $(k + 1) \times (k + 1)$-minors of $A$. A \point{(d - 2)} is singular on $S$ and generically it is a node. The \locus{(d - 2)} is not necessarily equal to the singular locus.

A generic quartic symmetroid in $\PP^3$ has ten \points{2}, which are nodes, and no further singularities. If a quartic surface has a finite set of nodes \dash or more generally, rational double points \dash it is birationally equivalent to a K3-surface, and is therefore irrational. Hence, a rational quartic symmetroid has either infinitely many rational double points or a more complicated singularity. In fact, the rational quartic surfaces are either double along a curve, or have a triple point or an elliptic double point \cites[Article~96]{Jes16}{Noe89}.

The generic case was first studied by Cayley in \cite{Cay69}. An account of of this work can also be found in \cite[Chapter~IX]{Jes16}. For a real, generic quartic symmetroid with a non-empty spectrahedron, the possible arrangements of the ten nodes are described by Degtyarev and Itenberg in \cite{DI11} and by Ottem et al.\ in \cite{Ott+14}. However, the rational quartic symmetroids have not been studied systematically before. We prove the following:

\newcommand{\textmain}
{
    A rational, irreducible quartic symmetroid in $\PP^3$ is either singular along a line \dash consisting either of \points{3} or \points{2} \dash or a smooth conic section, or it has a triple point or a tacnode.
    
    For each of the families of symmetroids having precisely one of these singular loci, a general member~$S$ satisfies the following:
    \begin{enumerate}
        \item
        $S$ is singular along a line of \points{3} and has four isolated nodes,

        \item
        $S$ is singular along a line of \points{2} and has six isolated nodes,

        \item
        $S$ is singular along a smooth conic section and has four isolated nodes,

        \item
        $S$ has a triple point and six isolated nodes,

        \item
        $S$ has a tacnode and six isolated nodes.
    \end{enumerate}
    All of the isolated nodes are \points{2}. For the cases $(1)\mkern-2mu$--$(5)$, the families have dimensions $21$, $19$, $17$, $21$ and $20$, respectively.
}

\begin{theorem}
    \label{thm:main}
    \textmain
\end{theorem}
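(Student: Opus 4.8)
The plan is to combine the classification of rational quartic surfaces recalled above with the reinterpretation of a quartic symmetroid as a web of quadrics, and to treat each candidate singular locus separately. By that classification, an irreducible rational quartic surface in $\PP^3$ is singular along a curve, has a triple point, or has an elliptic double point; so it suffices, within each of these regimes, to decide which surfaces occur as symmetroids, to pin down the exact singular locus, and to establish the ``general member'' and dimension statements. The organising dictionary is this: writing $A(x) = \sum_i A_i x_i$ as in~\eqref{eq:matrix-representation}, the $A_i$ span a web $W$ of quadrics in a dual $\PP^3_{\ybf}$; writing $Q_p$ for the quadric with matrix $A(p)$, the surface $S = \V(\det A)$ is the discriminant of $W$, the \locus{k} of $S$ is $\{\, p : \rank Q_p \le k \,\}$, and $\Sing S \supseteq \locus{2}$. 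Two infinitesimal facts drive everything. At a \point{3} $p$ the adjugate of $A(p)$ is a nonzero multiple of $vv^{\mathsf{T}}$ with $[v]$ spanning $\ker A(p)$, so by Jacobi's formula $d(\det A)_p$ is, up to a scalar, the functional $p' \mapsto Q_{p'}(v)$; hence $S$ is singular at $p$ exactly when the vertex $[v] \in \PP^3_{\ybf}$ of the cone $\V(Q_p)$ is a base point of $W$. At a \point{1} the form $\det A$ vanishes to order $\ge 3$, whereas at a point of corank $\le 2$ it vanishes to order at most $2$ (otherwise a Schur complement would exhibit a $2 \times 2$ matrix of linear forms of identically vanishing determinant, forcing $\rank A \le 3$ on $\PP^3$ and $S = \PP^3$); hence the triple points of $S$ are exactly its \points{1}. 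It follows that a curve of singular points of $S$ is either a curve of \points{2} (so the \locus{2} has jumped to dimension $1$, i.e.\ $W$ meets the locus of rank-$\le 2$ quadrics, of codimension $3$ in the $\PP^9$ of all quadrics, in excess dimension) or a curve of singular \points{3} (whose cone-vertices then sweep out a curve contained in the base locus of $W$); a triple point is a \point{1}, i.e.\ a double-plane member of $W$; and an elliptic double point sits at a \point{2}, or at a singular \point{3}, at which the finer local structure of $S$ degenerates.

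For the curve cases one first bounds the singular curve $C$. If $C$ lies in the base locus of $W$, then every member of $W$ contains $C$, so $h^0(\mathcal{I}_C(2)) \ge 4$; for $C$ irreducible and nondegenerate this forces $C$ to be a line or a conic, excluding twisted cubics, quartic elliptic curves and the like. A parallel but more delicate analysis, based on the canonical form under congruence of the symmetric matrix pencil obtained by restricting $A$ to a line, and to a conic, inside $\Sing S$, both disposes of the case in which $C$ consists of singular \points{3} and rules out reducible or plane-curve alternatives when $S$ is irreducible; what remains is precisely a line of \points{3}, a line of \points{2}, or a smooth conic. One then checks, for each admissible $C$, that a general web with the corresponding structure yields an irreducible quartic symmetroid singular exactly along $C$ (hence rational, by the cited classification), carrying the asserted number of residual isolated nodal \points{2}; this count follows from the generic count of ten \points{2} by conservation of number.

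The triple-point and tacnode cases run along the same lines. For the triple point, $W$ must contain a rank-$1$ quadric; choosing that double plane and completing it to a web produces an irreducible quartic symmetroid with a triple \point{1} and, again by conservation of number, six residual nodal \points{2}, rational by the cited classification (projection from the triple point is birational). For the elliptic double point, one shows that a quartic symmetroid carries no elliptic double point heavier than a tacnode: a worse isolated double point would force $W$ to meet the rank-$1$ locus, or the rank-$\le 2$ locus, in dimension larger than expected, or would make $S$ reducible, and so would throw $S$ into an earlier case; the $20$-dimensional family with a tacnode (at a \point{2}) and six residual nodes is then produced explicitly.

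Finally, the dimensions. From the $40$-dimensional space of matrix pencils $A$, the surface $S = \V(\det A)$ determines $A$ up to the $\operatorname{GL}_4$-congruence action $A \mapsto M^{\mathsf{T}} A M$ together with scaling (jointly $16$-dimensional) and a finite ambiguity, so the family of all quartic symmetroids has dimension $40 - 16 = 24$. Inside it, each of the five families is cut out by an explicit incidence or degeneracy condition on $W$: respectively, tangency to the discriminant along a line of \points{3}; meeting the rank-$\le 2$ locus in a line; meeting it in a conic; meeting the rank-$1$ locus; a tacnodal contact with the rank-$\le 2$ locus. One computes these conditions to have codimension $3$, $5$, $7$, $3$ and $4$, which gives the dimensions $21$, $19$, $17$, $21$ and $20$. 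The main obstacle is the curve case: reliably excluding every curve other than a line or a smooth conic as the singular locus of an irreducible quartic symmetroid, and separating the rank-$3$ from the rank-$2$ singular line, genuinely needs the matrix-pencil analysis rather than general quartic-surface geometry; a secondary but necessary ingredient is a Bertini-type verification, over the relevant parameter spaces, that a general member of each family has no singularities beyond those listed.
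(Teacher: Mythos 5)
Your outline follows the same skeleton as the paper \textemdash{} Noether's classification of rational quartics combined with the dictionary between a symmetroid and its web of quadrics \textemdash{} but at the two places where the real work happens, the mechanism you propose is either missing or would not succeed. The most serious is the exclusion of the elliptic double points of types $S^{(2)}_4$ and $S^{(3)}_4$. You assert that such a singularity ``would force $W$ to meet the rank-$1$ locus, or the rank-$\le 2$ locus, in dimension larger than expected, or would make $S$ reducible.'' That is not the case: the paper first shows (\cref{lem:elliptic-rank2}) that an elliptic double point on a general such symmetroid would be an ordinary \point{2}, and a web through a single \point{2} imposes no excess-intersection condition whatsoever. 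The actual exclusion goes through Cayley's theorem (\cref{thm:ramification}) that the ramification sextic of the projection from a \point{2} splits into two cubics, together with \cref{lem:not-on-a-conic} (the contact scheme $Z$ with $\V(F_4)$ is not on a conic), and then a page of explicit factorisation of the ramification sextic of \eqref{eq:type2} and \eqref{eq:type3} to show these constraints force a tacnode. Nothing in your sketch supplies a substitute for this step, and without it the statement ``tacnode'' (rather than ``elliptic double point'') in the theorem is unproved. Similarly, the exclusion of a double twisted cubic and the proof that a smooth double conic consists only of \points{2} are asserted via an unspecified ``matrix-pencil analysis''; the paper's arguments (\cref{thm:double-conics-have-rank2}, \cref{prop:no-twisted-cubic}) hinge on the specific lemmas that a corank-$1$ node's vertex is a base point and that a curve in the base locus forces reducibility (\cref{lem:singularity-is-a-base-point}, \cref{lem:curve-in-base-locus}), and these need to be run, not named.

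The second gap is the count of residual nodes ``by conservation of number.'' The total intersection of $W$ with the degree-$10$ \locus{2} of the full $\PP^9$ is indeed $10$, but to conclude that $4$ or $6$ isolated nodes remain you must compute the multiplicity with which the non-isolated part absorbs the rest, and this is exactly where naive counting fails. For the line of \points{3} the paper has to check that each of the three \points{2} on the line lies in $\Sing(X_2)$ and is therefore counted with multiplicity at least $2$ (\cref{prop:general-rank3-line}); for the tacnode the relevant components $X_1, X_2$ of the \locus{2} are non-normal, the distinguished point counts with multiplicity $3$ rather than $2$, and the paper must pass to the Segre varieties $\Sigma_{3,1}$ and $\Sigma_{2,2}$ to get the correct answer (\cref{prop:general-tacnode} and the remark following it). A conservation-of-number argument applied directly would give the wrong residual count in that case. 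The dimension counts have the same character: the codimensions $3,5,7,3,4$ you quote are the answers, not computations; the paper derives them from explicit incidence constructions (four coplanar base points plus $\GG(3,5)$, the formula \eqref{eq:dim-fixed-intersection} for subspaces meeting $X_p$ or $\Sing(X_1)\cap\Sing(X_2)$, etc.). Your identification of triple points with \points{1} and of the differential of $\det A$ at a \point{3} with evaluation at the vertex are both correct and consistent with the paper, but they are the easy part of the argument.
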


\begin{remark}
    \cref{thm:main} is not an exhaustive list of rational quartic symmetroids. Such a symmetroid can be singular along a curve with more than one component, or some of the isolated \points{2} may coincide and form more complicated singularities, or the symmetroid may have additional isolated \nodes{3}.

    A few examples are given in \cref{sec:two-lines,sec:examples}. Among them are notably the Steiner surface and Pl{\"u}cker's surface.
\end{remark}

\section{Linear Systems of Quadrics}

\noindent
Having identified the symmetroid $S$ with the symmetric matrix $A$, we have the notions of the associated quadratic form and quadric of $S$ at a point.

If $S$ has degree~$d$, let $\ybf \coloneqq [y_0, \ldots, y_{d-1}]$. The point $x \in \PP^n$ is associated the quadratic form $q(x) \coloneqq \ybf^T\mkern-7mu A(x) \ybf$. Then $Q(x) \coloneqq \V(q(x)) \subset \PP^{d-1}$ is the \emph{associated quadric} at $x$. The quadrics in the set
\begin{align*}
    W(S) \coloneqq \big\{ Q(x) \mid x \in \PP^n \big\}
\end{align*}
form the \emph{associated linear system of quadrics} of $S$. The point in $W(S)$ corresponding to the quadric $Q \subset \PP^{d-1}$ is denoted by $[Q]$. A symmetroid defined as the set of singular quadrics in a space $W$ of quadrics, is often called the \emph{discriminant hypersurface} of the space. The discriminant $D$ is given by $\det(A) = 0$, where $A$ is the matrix parametrising $W\mkern-4mu$. We have that $W = W(D)$.

\begin{remark}
    The choice of representation~\eqref{eq:matrix-representation} does not appear in the notation~$W(S)$. This is abuse of notation, because the associated system is in general not unique. The uniqueness holds in special cases. In particular, this is true for $10$-nodal quartic symmetroids in $\PP^3$ \cite[Proposition~11]{Ble+12}.

    The greatest discrepancy among the associated systems occurs for cones. Indeed, suppose that $S$ is defined by a matrix $A$ involving a variable $x_i$ that does not appear in $\det(A)$. Then $S$ is also given by the matrix $A'$  defined as $A$ with $x_i = 0$. The linear system of quadrics induced by $A'\mkern-3mu$, has lower dimension than the system induced by $A$.

    Different matrix representations can also give rise to different systems that have the same dimension. For instance, \cref{ex:2-representations} shows a symmetroid with two representations, where the \locus{2} of the symmetroid differs for the two representations.

    Our abuse of notation is justified by the fact that we only use properties that hold for all associated linear systems of quadrics that have the same \loci{k}.
\end{remark}

\noindent
Consider $\PP^9$ as the space of all quadrics in $\PP^3\mkern-4mu$. By the above, quartic symmetroids in $\PP^3$ correspond to linear subspaces $W \subset \PP^9$ with $\dim(W) \leqslant 3$. Recall that the dimension of the Grassmannian $\GG(k, n)$ of $k$-dimensional, linear subspaces of $\PP^n$ is given by
\begin{align}
    \label{eq:dim-Grassmannian}
    \dim \GG(k, n) = (k + 1)(n-k).
\end{align}
Since $\dim \GG(3, 9) = 24$, the symmetroids form a $24$-dimensional variety in the $\PP^{34}$ of all quartic surfaces. 

Let $D$ be the discriminant of the $\PP^9$ of all quadrics. The \locus{2}~$X_2$ of~$D$ is a sixfold of degree~$10$, and the \locus{1}~$X_1$ is a threefold of degree~$8$. B{\'e}zout's theorem implies that a generic linear $3$-space in $\PP^9$ intersects $X_2$ in ten points and avoids $X_1$. A generic quartic symmetroid has therefore ten \points{2}, which are nodes, and no \points{1}. Moreover, $\Sing(D) = X_2$, so it has no other singularities.

Next, we have collected a few results about spaces of quadrics. For a proof of the first lemma, see \cite[Lemma~1.13]{Il+16}.

\begin{lemma}
    \label{lem:singular-at-base-point}
    Let $\PP^n$ be a linear space of quadrics in a projective space $\PP^d$ and let $B \subset \PP^d$ be the base locus of the quadrics in $\PP^n\mkern-4mu$. Let $D \subset \PP^n$ be the discriminant. If $[Q] \in \PP^n$ is a point such that $Q$ is a singular quadric with a singularity at $p \in B$, then the discriminant $D$ is singular at $[Q]$.
\end{lemma}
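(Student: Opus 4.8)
The plan is to fix a matrix representation $A(x) = A_0 x_0 + \cdots + A_n x_n$ of the system, with the $A_i$ symmetric $(d+1)\times(d+1)$-matrices, so that $D = \V\big(\det A(x)\big)$, and then to check directly that $\det A(x)$ together with all of its first partial derivatives vanishes at $[Q]$. Choose representative vectors $x^{\ast}$ for $[Q] \in \PP^n$ and $p$ for the point $p \in \PP^d$, and put $A^{\ast} \coloneqq A(x^{\ast})$. The assumption that $p$ lies in the base locus says precisely that $p^{T} A_i\, p = 0$ for every $i$, while the assumption that $Q$ is singular at $p$ says that $A^{\ast} p = 0$; in particular $\corank A^{\ast} \geqslant 1$, so $\det A^{\ast} = 0$ and $[Q] \in D$. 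It thus remains to kill the gradient of $\det A(x)$ at $x^{\ast}$.

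For this I would invoke Jacobi's formula for the derivative of a determinant, $\partial_{x_i}\det A(x) = \operatorname{tr}\!\big(\operatorname{adj}(A(x))\,A_i\big)$, where $\operatorname{adj}$ is the adjugate; this is a polynomial identity and so remains valid where $A(x)$ is singular, and at $x = x^{\ast}$ it reads $\operatorname{tr}\!\big(\operatorname{adj}(A^{\ast})\,A_i\big)$. Now split on the corank. If $\corank A^{\ast} \geqslant 2$, then $\rank A^{\ast} \leqslant d-1$, so all $d\times d$ minors of $A^{\ast}$ vanish, $\operatorname{adj}(A^{\ast}) = 0$, and every partial is zero. If $\corank A^{\ast} = 1$, then $A^{\ast}\operatorname{adj}(A^{\ast}) = \det(A^{\ast})\,I = 0$ forces every column of $\operatorname{adj}(A^{\ast})$ into $\ker A^{\ast} = \langle p \rangle$; since $\operatorname{adj}(A^{\ast})$ is symmetric, being the adjugate of a symmetric matrix, this yields $\operatorname{adj}(A^{\ast}) = \lambda\, p\, p^{T}$ for some scalar $\lambda$, whence $\partial_{x_i}\det A(x)\,|_{x^{\ast}} = \lambda\operatorname{tr}\!\big(p\,p^{T}A_i\big) = \lambda\, p^{T} A_i\, p = 0$ because $p$ is a base point.

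In both cases all first partials of $\det A(x)$ vanish at $x^{\ast}$, so $D$ is singular at $[Q]$, as claimed. The only step needing a word of care is the normal form $\operatorname{adj}(A^{\ast}) = \lambda\, p\, p^{T}$ in the corank-one case, which I would derive from $A^{\ast}\operatorname{adj}(A^{\ast}) = 0$ together with $\operatorname{adj}(A^{\ast})^{T} = \operatorname{adj}\big((A^{\ast})^{T}\big) = \operatorname{adj}(A^{\ast})$; this is routine, and I do not anticipate any genuine obstacle — once Jacobi's formula is in hand the argument is essentially a one-liner, with the base-locus hypothesis entering precisely through the identity $p^{T}A_i\,p = 0$.
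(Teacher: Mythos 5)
Your argument is correct and complete. The paper itself gives no proof of this lemma \dash it simply cites \cite[Lemma~1.13]{Il+16} \dash so there is nothing internal to compare against; what you have written is the standard self-contained computation. The two hypotheses enter exactly where they should: $Q$ singular at $p$ gives $A^{\ast}p = 0$, hence $\det A^{\ast} = 0$, and $p \in B$ gives $p^{T}A_i\,p = 0$ for every $i$, which via Jacobi's formula $\partial_{x_i}\det A(x) = \operatorname{tr}\bigl(\operatorname{adj}(A(x))A_i\bigr)$ kills the whole gradient once you know $\operatorname{adj}(A^{\ast})$ is either zero (corank~$\geqslant 2$) or $\lambda\,p\,p^{T}$ (corank~$1$). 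The normal form in the corank-one case is justified as you say: the columns of $\operatorname{adj}(A^{\ast})$ lie in $\ker A^{\ast} = \langle p\rangle$, so $\operatorname{adj}(A^{\ast}) = p\,c^{T}$ for some vector $c$, and symmetry forces $c \in \langle p\rangle$. One cosmetic point: you correctly sized the matrices as $(d+1)\times(d+1)$ for quadrics in $\PP^d$, whereas the paper's running convention elsewhere pairs $d\times d$ matrices with quadrics in $\PP^{d-1}$; just make sure the indexing is stated consistently if this is spliced in.
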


\noindent It is not true in general that if $[Q] \in \Sing(D)$, then $\Sing(Q) \cap B \neq \varnothing$. Below is a counterexample to the converse of \cref{lem:singular-at-base-point}:

\begin{example}
    \label{ex:counterexample-converse}
    Consider the symmetroid $S \subset \PP^3$ defined by the determinant of the matrix
    \begin{align*}
        \begin{bmatrix}
            0 & 
            x_0 + x_1 - x_2 - x_3 &
            -2x_1 - x_2 + x_3 &
            x_3 
            \\
            x_0 + x_1 - x_2 - x_3 &
            0 &
            x_0 - x_1 - 2x_2 &
            x_0 
            \\
            -2x_1 - x_2 + x_3 &
            x_0 - x_1 - 2x_2 &
            0 &
            x_1 
            \\
            x_3 &
            x_0 &
            x_1 &
            x_2
        \end{bmatrix}
        \mkern-7mu.
    \end{align*}
    It has rank $2$ along a line and in six additional points.
    The associated quadrics $Q_i$ at the points 
    \begin{align*}
        [Q_1] \coloneqq [1 : 0 : 0 : 0],
        \
        [Q_2] \coloneqq [0 : 1 : 0 : 0], 
        \
        [Q_3] \coloneqq [0 : 0 : 1 : 0],
        \
        [Q_4] \coloneqq [0 : 0 : 0 : 1]
    \end{align*}
    are
    \begin{equation*}
        \begin{aligned}
            Q_1 &= \V(y_1(y_0 + y_2 + y_3)), 
            \\[0.3ex]
            Q_2 &= \V(y_0y_1 - 2y_0y_2 - y_1y_2 + y_2y_3),
        \end{aligned}
        \qquad
        \begin{aligned}
            Q_3 &= 
            \V\big(2y_0y_1 + 2y_0y_2 + 4y_1y_2 - y_3^2\big), 
            \\[0.3ex] 
            Q_4 &= \V(y_0(y_1 - y_2 - y_3)).
        \end{aligned}    
    \end{equation*}
    These four span the associated web of quadrics. The intersection $Q_1 \cap Q_2 \cap Q_3 \cap Q_4$ consists of the four coplanar points
    \begin{align*}
        p_1 \coloneqq [1 : 0 : 0 : 0],
        \quad
        p_2 \coloneqq [0 : 1 : 0 : 0], 
        \quad
        p_3 \coloneqq [0 : 0 : 1 : 0],
        \quad
        p_4 \coloneqq [-1 : 1 : 1 : 0].
    \end{align*}    
    Note that $[Q_1] \in \Sing(S)$ is \point{2}. The quadric $Q_1$ is singular along the line $L \coloneqq \V(y_1, y_0 + y_2 + y_3)$. None of the base points $p_i$ lie on $L$.
\end{example}

\noindent The key to \cref{ex:counterexample-converse} is that the node $[Q_1]$ has low rank. We prove a strengthened version of \cref{lem:singular-at-base-point} for nodes with corank $1$:

{
    \pretolerance = 228
    \begin{lemma}[{\cite[Lemma~1.1]{Wal81}}]
        \label{lem:singularity-is-a-base-point}
        Let $\PP^n$ be a linear space of quadrics in a projective space $\PP^d$ and let $B \subset \PP^d$ be the base locus of the quadrics in $\PP^n\mkern-4mu$. Let $D \subset \PP^n$ be the discriminant. Then $D$ has degree~$d+1$. If $[Q] \in D$ is a \point{d}, let $p$ be the singular point of $Q$. Then $p \in B$ if and only if $D$ is singular at $[Q]$.
    \end{lemma}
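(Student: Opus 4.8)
The plan is to reduce the whole statement to one computation with the gradient of $\det A$. Write $A(x) = A_0 x_0 + \cdots + A_n x_n$ with the $A_i$ symmetric $(d+1)\times(d+1)$ matrices over $\C$, and let $F \coloneqq \det A(x)$, the defining polynomial of $D$. Then $F$ is homogeneous of degree $d+1$ in $x$, and it is not the zero polynomial exactly because a general quadric of the system is nonsingular; hence $\deg D = d+1$. For the second part I would invoke Jacobi's formula: since $A(x)$ depends linearly on $x$,
\[
    \frac{\partial F}{\partial x_i}(x) = \operatorname{tr}\bigl(\operatorname{adj} A(x) \cdot A_i\bigr),
\]
where $\operatorname{adj}$ denotes the adjugate (classical adjoint) matrix.

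Next I would fix a \point{d} $[Q] = [a] \in D$ and put $M \coloneqq A(a)$, so $\rank M = d$ and $\corank M = 1$. Choose $0 \neq v \in \ker M$. Because $M$ is symmetric, $v$ also spans the cokernel, and $[v] \in \PP^d$ is precisely the unique singular point $p$ of $Q = \V(\ybf^T M \ybf)$ (its singular locus is $\V(M\ybf)$, the projectivisation of $\ker M$). From $M\operatorname{adj} M = \operatorname{adj} M \cdot M = (\det M) I = 0$, together with $\operatorname{adj} M \neq 0$ (some $d\times d$ minor of $M$ is nonzero), every column and every row of $\operatorname{adj} M$ lies in the line $\langle v\rangle$; hence $\operatorname{adj} M = c\, v v^T$ for some $c \in \C^\times$.

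Substituting into Jacobi's formula gives $(\partial F/\partial x_i)(a) = c\,(v^T A_i v)$ for every $i$. Therefore $D$ is singular at $[Q]$ if and only if $v^T A_i v = 0$ for all $i$. On the other hand, $p = [v]$ lies in the base locus $B$ if and only if $v$ lies on every member of the system, i.e.\ $v^T A(x) v = \sum_i (v^T A_i v)\, x_i$ vanishes identically in $x$, which is again equivalent to $v^T A_i v = 0$ for all $i$. The two conditions are literally the same, so $p \in B$ if and only if $[Q] \in \Sing D$; in particular this re-proves \cref{lem:singular-at-base-point} in the corank-$1$ case.

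I expect the only genuinely delicate point to be the description of the adjugate at a corank-$1$ symmetric matrix \dash that it is a \emph{nonzero} scalar multiple of $v v^T$, and that this same $v$ is the singular point of $Q$; once that is in place, everything else is bookkeeping with the trace identity $\operatorname{tr}(v v^T A_i) = v^T A_i v$. The degenerate possibility $\det A \equiv 0$ (where $D$ would be all of $\PP^n$) must be excluded at the outset, but this is covered by the standing assumption that $D$ is a genuine discriminant hypersurface.
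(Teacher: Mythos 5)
Your proof is correct, and it takes a genuinely different route from the one in the paper. The paper splits the equivalence into two implications: the direction ``$p \in B \Rightarrow [Q] \in \Sing(D)$'' is delegated to \cref{lem:singular-at-base-point}, while the converse is proved by normalising coordinates so that $A_0 = \operatorname{diag}(a_0, \ldots, a_{d-1}, 0)$ and reading off from the expansion of $\det(A_0x_0 + [l_{ij}])$ that the vanishing of all $x_0^d x_i$ coefficients forces $l_{dd} = 0$, i.e.\ forces $p = [0 : \cdots : 0 : 1]$ to be a base point. You instead compute the full gradient of $F = \det A(x)$ at $[Q]$ via Jacobi's formula and the identity $\operatorname{adj} A(a) = c\,vv^T$ with $c \neq 0$ (which is where the corank-$1$ hypothesis enters, exactly as the nonvanishing of the $a_i$ does in the paper), obtaining $\nabla F(a) = c\,(v^TA_0v, \ldots, v^TA_nv)$; both implications then drop out of the single observation that this vector vanishes precisely when $[v] \in B$. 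Your argument is coordinate-free, does not rely on \cref{lem:singular-at-base-point}, and recovers that lemma in the corank-$1$ case as a by-product; the paper's argument is more elementary in that it avoids the adjugate, at the cost of a coordinate choice and an appeal to the earlier lemma. One small remark: your closing caveat about $\det A \equiv 0$ is only needed for the degree assertion; the gradient computation itself, and hence the equivalence, goes through verbatim when $F$ is the zero polynomial (every point of $D$ is then singular and every corank-$1$ apex is a base point), which is in fact how the lemma is applied in the proof of \cref{lem:rank3-pencil}.
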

}

\begin{proof}
    By \cref{lem:singular-at-base-point}, the `only if' direction is true regardless of $\corank [Q]$. For the `if' direction, choose coordinates such that $[Q] \coloneqq [1 : 0 : \ldots : 0] \in \PP^n\mkern-4mu$. It is always possible to conjugate $A(x)$ in such a way that one of the matrices $A_i$ in \eqref{eq:matrix-representation} is diagonal. We may therefore assume that the matrix defining $Q$ is
    \begin{equation*}
        \setlength{\arraycolsep}{1pt}
        A_0
        \coloneqq
        \begin{bmatrix}
            a_0 & &
            \multicolumn{2}{c}
            {
                \text{\kern1.8em\smash{\raisebox{-2ex}{\huge 0}}}
            } 
            \\
            & \ddots & &
            \\
            & & a_{d-1} &
            \\
            \multicolumn{2}{c}
            {
                \text{\kern-1.3em\smash{\raisebox{0.2ex}{\huge 0}}}
            }
            & & 0
        \end{bmatrix}
        \mkern-7mu.
    \end{equation*}
    The $a_i$ are non-zero since the rank of $[Q]$ is $d$. Hence $Q = \V\big(a_0x^2_0 + \cdots + a_{d-2}x^2_{d-1}\big)$, so $p = [0 : \ldots : 0 : 1] \in \PP^d\mkern-4mu$.
    
    Denote the entries in $A_1x_1 + \cdots + A_{n}x_{n}$ by $l_{ij} \in \C[x_1,\ldots,x_n]_1$, for $0 \leqslant i \leqslant j \leqslant d$. The discriminant~$D$ is given by the determinant of $A(x) \coloneqq A_0x_0 + [l_{ij}]$. Since $[Q]$ is a singular point on $D$, then $\det(A(x))$ cannot contain any $x_0^d x_i$ terms for $i = 0, \ldots, n$. It follows that $l_{dd} = 0$. This implies that $p$ is in the base locus $B$.
\end{proof}

\noindent We will need some special properties of $1$-dimensional linear systems of quadrics:

\begin{lemma}
    \label{lem:rank2-pencil}
    Let $P$ be a pencil of \quadrics{2} in $\PP^n\mkern-4mu$, with $n \geqslant 2$. The base locus of~$P$ consists of a hyperplane $H$ and a linear subspace $L \not\subset H$ of codimension~$2$.
\end{lemma}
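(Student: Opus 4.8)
The plan is to reduce the statement to one claim: \emph{there is a fixed hyperplane $H = \V(\ell)$ that is a component of every member of $P$}. Granting this, we are done. Over $\C$ a rank-$2$ quadric is a union of two distinct hyperplanes, so once $\ell$ divides every member we may write $P = \bigl\{ \V\bigl( \ell\,(sm_0 + tm_1) \bigr) : [s:t] \in \PP^1 \bigr\}$ for linearly independent linear forms $m_0, m_1$ (they cannot be proportional, or $P$ would be a single point). With $L \coloneqq \V(m_0, m_1)$, a linear space of codimension~$2$, the base locus of $P$ is
\begin{equation*}
    \bigcap_{[s:t] \in \PP^1} \bigl( \V(\ell) \cup \V(sm_0 + tm_1) \bigr) = \V(\ell) \cup \V(m_0, m_1) = H \cup L .
\end{equation*}
Moreover $L \not\subset H$: otherwise $\ell$ vanishes on $\V(m_0, m_1)$, so $\ell = \alpha m_0 + \beta m_1$ for some $[\alpha:\beta]$, and the member of $P$ at $[s:t] = [\alpha:\beta]$ is $\V(\ell^2)$, of rank~$1$ \dash contrary to hypothesis.

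So the content is the common hyperplane. To produce it I would pass to the multiplication morphism
\begin{equation*}
    \mu \colon (\PP^n)^\vee \times (\PP^n)^\vee \to \PP\bigl( \C[x_0, \ldots, x_n]_2 \bigr) , \qquad \bigl( \V(\ell), \V(m) \bigr) \longmapsto \V(\ell m) ,
\end{equation*}
whose image is the locus of quadrics of rank at most~$2$. Over a quadric $\V(\ell m)$ of rank exactly~$2$ the fibre is the pair of distinct points $\bigl( \V(\ell), \V(m) \bigr)$, $\bigl( \V(m), \V(\ell) \bigr)$, so $\mu$ is étale over the rank-$2$ locus (there it is a free quotient by the involution that swaps the factors). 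Since every member of $P$ has rank~$2$, the pencil is a line $\ell_P \cong \PP^1$ inside the rank-$2$ locus, so $\mu^{-1}(\ell_P) \to \ell_P$ is an étale double cover of $\PP^1$, hence trivial: $\mu^{-1}(\ell_P) = C_1 \sqcup C_2$ with each $C_i \cong \PP^1$ mapping isomorphically onto $\ell_P$. Geometrically this says the two hyperplane components of the quadrics in $P$ can be chosen consistently along the whole pencil, precisely because no member drops to rank~$1$.

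Fix one component $C \coloneqq C_1$, with projections $p, q \colon C \to (\PP^n)^\vee$, so that each member of $P$ is $\V\bigl( p(c)\,q(c) \bigr)$ for the corresponding $c \in C$. Since $\mu^* \mathcal{O}(1) = \mathcal{O}(1,1)$ and $\mu$ restricts to an isomorphism $C \xrightarrow{\sim} \ell_P$, pulling line bundles back to $C$ gives $\deg p^* \mathcal{O}(1) + \deg q^* \mathcal{O}(1) = 1$. Both summands are non-negative, so one of them vanishes; say the first, which forces $p$ to be constant, with value a single hyperplane $H = \V(\ell)$. Then $\ell$ divides every member of $P$, while $q$ is an isomorphism onto a line in $(\PP^n)^\vee$ and so traces out the pencil of second components \dash exactly the input used above. (The hypothesis $n \geqslant 2$ is needed throughout; for $n = 1$ the statement is vacuous, as every pencil of binary quadratic forms contains a square, so $\PP^1$ carries no pencil of rank-$2$ quadrics.)

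The main obstacle is this existence step; everything else is formal. Its only non-elementary ingredient is the triviality of étale double covers of $\PP^1$ \dash equivalently, the absence of monodromy interchanging the two hyperplane components of the members. A more hands-on alternative analyses the base locus directly: in the contrary case, two coprime generators $q_0 = \ell_0 \ell_0'$ and $q_1 = \ell_1 \ell_1'$ of $P$ make $\V(q_0, q_1)$ a degree-$4$ complete intersection splitting into the four codimension-$2$ spaces $\V(\ell_0, \ell_1)$, $\V(\ell_0, \ell_1')$, $\V(\ell_0', \ell_1)$, $\V(\ell_0', \ell_1')$, and demanding that all four lie on every rank-$2$ member of $P$ forces, by a pigeonhole argument on this $2 \times 2$ pattern, either a common component (contradicting coprimality) or $\langle \ell_0, \ell_1 \rangle = \langle \ell_0', \ell_1' \rangle$ (contradicting that $\ell_0, \ell_0', \ell_1, \ell_1'$ span at least three dimensions). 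I would write the covering-space argument, as it is shorter; the pigeonhole version requires more careful case analysis.
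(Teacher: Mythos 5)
Your proof is correct, but it takes a genuinely different route from the paper's. The paper argues directly from two generators $Q_1 = H_1 \cup H_1'$ and $Q_2 = H_2 \cup H_2'$: it rules out the case where the four hyperplanes are unrelated (the base locus would then be four general codimension-$2$ linear spaces, through which quadrics of rank greater than $2$ pass), and then eliminates the remaining configurations because they would force a \quadric{1} into $P$ \dash essentially the ``pigeonhole'' alternative you sketch at the end and decline to write out. Your chosen argument instead establishes the common hyperplane globally: the multiplication map $\mu \colon (\PP^n)^\vee \times (\PP^n)^\vee \to \PP(\C[x_0,\ldots,x_n]_2)$ restricts to an \'etale double cover of the locus of quadrics of rank exactly $2$, the pencil lifts trivially because $\PP^1$ is simply connected, and the bidegree computation $\deg p^*\mathcal{O}(1) + \deg q^*\mathcal{O}(1) = 1$ forces one factor to be constant. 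All the steps check out (including the endgame identifying the base locus as $H \cup L$ and the rank-$1$ contradiction when $L \subset H$), though to call the cover \'etale rather than merely finite and unramified you should note that the rank-exactly-$2$ locus is smooth, so the bijection from the free quotient onto it is an isomorphism. What your route buys is a case-free, conceptual explanation \dash the common component exists because there is no monodromy swapping the two planes and because $(1,1)$ only splits as $(1,0)+(0,1)$ \dash at the cost of invoking the secant variety of the Veronese and covering-space theory; the paper's argument is longer-winded but entirely elementary and matches the synthetic style used throughout the rest of the text.
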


\begin{proof}
    The singular locus of a \quadric{k} is a linear space with codimension~$k$. A \quadric{2} is therefore the union of two hyperplanes. Let $Q_1 \coloneqq H_1 \cup H'_1$ and $Q_2 \coloneqq H_2 \cup H'_2$ be two generators for $P\mkern-3mu$. If there is no relation between the hyperplanes $H_1$ and $H'_1$ and the hyperplanes $H_2$ and $H'_2$, then the intersection $Q_1 \cap Q_2$ consists of four general linear, $(n-2)$-dimensional varieties. Thus there exist quadrics with rank greater than $2$ passing through $Q_1 \cap Q_2$. This contradicts the fact that $P$ consists of \quadrics{2}.

    Hence, there are two possibilities: Either $H_2$ and $H'_2$ are linear combinations of $H_1$ and $H'_1$, or $Q_1$ and $Q_2$ have a hyperplane $H$ in common. If $H_2$ and $H'_2$ are linear combinations of $H_1$ and $H'_1$, then $P$ contains two \quadrics{1}, unless $Q_1$ and $Q_2$ have a hyperplane in common. Suppose that $H_1 = H_2 \eqqcolon H\mkern-2mu$. Then the base locus of $P$ consists of $H$ and $L \coloneqq H'_1 \cap H'_2$. If $L \subset H\mkern-2mu$, then $P$ contains the double hyperplane $H\mkern-2mu$, which is a \quadric{1}. The claim follows by elimination.
\end{proof}

\begin{lemma}
    \label{lem:rank3-pencil}
    Let $P$ be a pencil of quadrics in $\PP^3\mkern-4mu$. Assume that a general quadric in $P$ has rank $3$. Let $Q_1, Q_2 \in P$ be two \quadrics{3}. Then there are no smooth quadrics in $P$ and one of the following is true:
    \begin{enumerate}
        \setlength{\itemsep}{1ex}

        \item
        The quadrics in $P$ have a common singular point $\Sing(Q_1) = \Sing(Q_2)$, and the \locus{2} of $P$ is a scheme of length $3$.

        \item
        The quadrics in $P$ have a common tangent plane along the line $L$ spanned by $\Sing(Q_1)$ and $\Sing(Q_2)$. In this case, $P$ contains a single \quadric{2}.
    \end{enumerate}
\end{lemma}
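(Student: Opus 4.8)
The plan is to dispose of ``$P$ contains no smooth quadrics'' first, and then split according to whether $\Sing(Q_1) = \Sing(Q_2)$. If some member of $P$ were smooth, a general member would be smooth too, by openness of the maximal-rank locus, contradicting the hypothesis; so every member is singular, and the discriminant of $P$ --- a binary quartic by \cref{lem:singularity-is-a-base-point} --- vanishes identically. Fix symmetric matrices $A_1, A_2$ representing $Q_1, Q_2$, set $A(s,t) \coloneqq s A_1 + t A_2$, and let $v_i$ span the one-dimensional kernel $\ker A_i$, so $\Sing(Q_i) = [v_i] \eqqcolon p_i$.

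Suppose first $p_1 = p_2$. Choosing coordinates on $\PP^3$ with $p_1 = p_2 = [0 : 0 : 0 : 1]$, the matrices $A_1, A_2$, hence $A(s,t)$, have vanishing last row and column, so $A(s,t)$ is a $3 \times 3$ symmetric pencil $M(s,t)$ bordered by a zero row and column. Then $p_1 \in \Sing(Q)$ for every $Q \in P$ --- the common singular point of $(1)$. The \locus{2} of $P$ is the vanishing of the $3 \times 3$ minors of $A(s,t)$, and the only one not identically zero is $\det M(s,t)$, so the \locus{2} equals $\V(\det M)$. As a general member has rank $3$, the conic $M(s,t)$ is generically smooth, so $\det M$ is a nonzero binary cubic and $\V(\det M)$ has length $3$ on $\PP^1$.

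Now suppose $p_1 \neq p_2$, and let $L \coloneqq \overline{p_1 p_2}$. The core step is to show $L$ lies in the base locus of $P$ and every vertex lies on $L$. First, $p_1 \in Q_2$: expanding $\operatorname{adj}(A(s,t))\, A(s,t) = \det(A(s,t))\, I \equiv 0$ in $(s,t)$, the coefficient of $s^3 t$ reads $\operatorname{adj}(A_1)\, A_2 = -C A_1$ for some $C$; since $\corank A_1 = 1$ we have $\operatorname{adj}(A_1) = \rho\, v_1 v_1^{T}$ with $\rho \neq 0$, and right-multiplication by $v_1$ gives $v_1^{T} A_2 v_1 = 0$. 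Symmetrically $p_2 \in Q_1$. (One can instead differentiate a section of $\ker A(s,t)$ at $[Q_1]$, using that the image of $A_1$ equals $v_1^{\perp}$.) Now $p_1, p_2$ are two distinct points of the cone $Q_1$ on $L$, so $L \subset Q_1$; likewise $L \subset Q_2$, and thus $L \subset Q$ for every $Q \in P$. Were a rank-$3$ member $Q$ to have its vertex off $L$, the plane spanned by that vertex and $L$ would lie on $Q$ --- impossible for a \quadric{3}; so every vertex lies on $L$.

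Finally, take coordinates with $p_1 = [1 : 0 : 0 : 0]$ and $p_2 = [0 : 1 : 0 : 0]$, so $L = \V(y_2, y_3)$. Then $A(s,t)\, e_0 = t A_2 e_0$ and $A(s,t)\, e_1 = s A_1 e_1$ with $A_2 e_0 \neq 0 \neq A_1 e_1$, while for any rank-$3$ member the restriction of $A(s,t)$ to $\langle e_0, e_1 \rangle$ has rank exactly $1$: its kernel contains the vertex, and its image lies in $\langle e_0, e_1 \rangle^{\perp}$ since $L \subset Q$. Hence $A_1 e_1$ and $A_2 e_0$ are proportional and span the conormal line of a fixed hyperplane $\Pi \supset L$, which is then the tangent plane of every member of $P$ along $L$ --- the common tangent plane of $(2)$. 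After a further change of coordinates making $\Pi = \V(y_3)$, the first two columns of $A(s,t)$ become proportional to $e_3$, and a short computation shows every $3 \times 3$ minor of $A(s,t)$ carries a common linear factor $a s + a' t$, with $a, a'$ both nonzero because $\rank Q_1 = \rank Q_2 = 3$; so the \locus{2} of $P$ is the single reduced point $\V(a s + a' t)$, and $P$ contains exactly one \quadric{2}. I expect the step ``every vertex lies on $L$'', which rests on $p_1 \in Q_2$ --- the one genuinely non-formal use of $\det A(s,t) \equiv 0$ --- to be the main obstacle; the rest is coordinate bookkeeping.
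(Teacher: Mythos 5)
Your proof is correct, and its skeleton coincides with the paper's: the discriminant of $P$ is a binary quartic that vanishes identically because the general member is singular (hence no smooth quadrics), the vertex of each rank-$3$ member is a base point of $P$, and the dichotomy is governed by whether $\Sing(Q_1)$ and $\Sing(Q_2)$ coincide. The differences lie in how the substeps are carried out. Where the paper places the vertices in the base locus by applying \cref{lem:singularity-is-a-base-point} to the everywhere-singular discriminant of the pencil, you reprove exactly that special case inline by extracting the $s^3t$-coefficient of $\operatorname{adj}(A(s,t))\,A(s,t) = \det(A(s,t))\,I \equiv 0$ \dash a clean, self-contained substitute. And where the paper counts the rank-$2$ locus synthetically \dash four concurrent lines paired into three \quadrics{2} in case $(1)$, the line $L$ with multiplicity $2$ plus a residual conic in case $(2)$ \dash you compute the $(3 \times 3)$-minors explicitly in adapted coordinates. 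Your route is arguably tighter in case $(1)$: the paper's assertion that ``the length of the \locus{2} remains $3$'' under degenerations of the four lines is precisely what your observation that the only nonvanishing minor is the binary cubic $\det M(s,t)$ establishes uniformly. In case $(2)$, your minor computation pins down the unique \quadric{2} at $\V(as + a't)$ with $a, a'$ both nonzero (so it is distinct from $[Q_1]$ and $[Q_2]$) without describing the residual conic; I verified that the nonzero minors are $s^2u$, $stu$ and $t^2u$ up to constants, with $u = as + a't$ the $(2,2)$-entry of $A(s,t)$, so your claimed common linear factor is exactly right.
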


\begin{proof}
    The discriminant of a pencil of quadrics in $\PP^3$ is either a scheme of length~$4$ or the whole line. Since a general quadric in $P$ is of rank $3$, the discriminant~$D$ equals $P\mkern-3mu$. Hence there are no smooth quadrics in $P\mkern-3mu$. Moreover, since $D$ is given by the zero polynomial, it is singular at all points. By \cref{lem:singularity-is-a-base-point}, the singular point of each \quadric{3} in $P$ is a base point. It follows that the quadrics either have a common singular point $(1)$ or the singular points form a line $(2)$.

    \begin{enumerate}
        \setlength{\itemsep}{1ex}

        \item
        The intersection $Q_1 \cap Q_2$ consists of four concurrent lines, $L_1$, $L_2$, $L_3$ and $L_4$. Let $H_{ij}$ be the plane spanned by $L_i$ and $L_j$, and let $H_{kl}$ be the plane spanned by the remaining two lines, $L_k$ and $L_l$. The union $Q_{ij} \coloneqq H_{ij} \cup H_{kl}$ is a \quadric{2} contained in $P\mkern-3mu$. There are three such \quadrics{2}, $Q_{12}$, $Q_{13}$ and $Q_{14}$. If some of the $L_i$ coincide, then some of the $Q_{ij}$ coincide and in some cases become \quadrics{1}, but the length of the \locus{2} remains $3$.

        \item
        Both $Q_1$ and $Q_2$ contain the line $L$, and they have apexes along this line. Thus they share a tangent plane along $L$. The intersection $Q_1 \cap Q_2$ consists of $L$, counted with multiplicity~$2$, and a conic section $C$. The union of the tangent plane along $L$ and the plane spanned by $C$, is the only \quadric{2} in $P\mkern-3mu$. \qedhere
    \end{enumerate} 
\end{proof}

\noindent
The following simple observation is useful for excluding possible symmetroids:

\begin{lemma}
    \label{lem:curve-in-base-locus}
    Let $S \subset \PP^3$ be a quartic symmetroid and assume that the base locus of $W(S)$ contains a curve $C$. Then $S$ is reducible.
\end{lemma}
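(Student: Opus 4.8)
Write $A = A(x)$ for a symmetric matrix of linear forms with $S = \V(\det A)$, so $F \coloneqq \det A$ is a nonzero quartic; the goal is to show $F$ is reducible. Every quadric in $W \coloneqq W(S)$ contains $C$, and, replacing $C$ by one of its irreducible components, I may assume $C$ is irreducible. The plan is to dispose first of small $\dim W$, then to bound $\deg C$ by restricting to a general smooth member of $W$, and finally to inspect the matrix $A$ in each of the few surviving cases.

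If $\dim W \leqslant 1$, the linear map $x \mapsto Q(x)$ has a kernel of dimension at least $2$, so after a coordinate change on $\PP^3$ one has $A = A_0x_0 + A_1x_1$, and $F = \det(A_0x_0 + A_1x_1)$ is a nonzero binary quartic, hence a product of four linear forms. So assume $\dim W \geqslant 2$. The quadrics in $W$ are then not all singular (otherwise $F \equiv 0$), so a general $Q_0 \in W$ is smooth; identify $Q_0$ with $\PP^1 \times \PP^1$ and let $(a, b)$ be the bidegree of $C$ on it, so $a + b = \deg C$. For every $Q \in W$ with $Q \neq Q_0$, the intersection $Q \cap Q_0$ is a $(2,2)$-curve containing $C$, hence $a, b \leqslant 2$ and $Q \cap Q_0$ lies in the linear system $C + |\mathcal{O}_{Q_0}(2 - a, 2 - b)|$, of dimension $(3 - a)(3 - b) - 1$. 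As $[Q] \mapsto [Q \cap Q_0]$ is the linear projection of $W$ from $[Q_0]$, this system has dimension $\dim W - 1$, so $\dim W \leqslant (3 - a)(3 - b)$. This forces $\deg C \leqslant 3$, with $\deg C \leqslant 2$ if $\dim W = 3$; in particular $\dim W = 2$ whenever $\deg C = 3$.

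It remains to treat $\deg C \in \{1, 2, 3\}$. If $\deg C = 1$, then $C$ is a line $\V(y_2, y_3)$, the upper-left $2 \times 2$ block of $A$ vanishes, and writing $A = \left[\begin{smallmatrix} 0 & B \\ B^T & D\end{smallmatrix}\right]$ one gets $F = (\det B)^2$. If $\deg C = 2$, then $C$ is a smooth conic $\V(y_3, q_0)$, the degree-two part of $\mathcal{I}_C$ equals $\langle q_0 \rangle \oplus y_3 \cdot \C[y_0, \ldots, y_3]_1$, so every $Q(x) \in W$ has the form $q(x) = \lambda(x) q_0 + y_3\, \ell(x)$ with $\lambda$ linear in $x$; expanding the resulting bordered matrix gives $\lambda^2 \mid F$, and $\lambda \not\equiv 0$ since $F \not\equiv 0$. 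If $\deg C = 3$, then $C$ is nondegenerate \dash a plane cubic would force every quadric of $W$ to contain a common plane, whence $F \equiv 0$ \dash so $C$ is a twisted cubic; then $W \subseteq |\mathcal{I}_C(2)|$, and as both are nets (dimension $2$) they agree, so $W$ is spanned by the $2 \times 2$ minors of a $2 \times 3$ matrix of linear forms, and a direct computation of the determinant of the associated symmetric matrix yields $F = c\, g^2$ for a nonzero constant $c$ and a quadratic form $g$. In every case $F$ is reducible, which proves the lemma.

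The heart of the argument \dash and the step that needs the most care \dash is this final trichotomy: one must verify that the degree bound, together with the two elementary observations that neither a planar curve of degree $\geqslant 3$ nor a plane common to all of $W$ can lie in the base locus (both would give $F \equiv 0$), really does restrict $C$ to a line, a smooth conic, or a twisted cubic, and then one must carry out the two short determinant computations. It is worth stressing that the twisted-cubic computation is valid precisely because there $\dim W$ is forced to be $2$, pinning $W$ down completely.
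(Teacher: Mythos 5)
Your proof is correct and follows essentially the same route as the paper: reduce to an irreducible base curve, show it must be a line, a smooth conic, or a twisted cubic (with the plane-cubic, quartic and degenerate cases disposed of separately, the latter two via $\dim W \leqslant 1$), and then exhibit the determinant as a square, or as $\lambda^2$ times a quadric, using the same explicit matrices. The only real difference is that you derive the degree bound on $C$ from scratch by restricting to a smooth member of $W$ and counting bidegrees, where the paper simply invokes the standard classification of irreducible curves lying on an intersection of quadrics.
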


\begin{proof}
    The base locus of $W(S)$ is an intersection of quadrics in $\PP^3\mkern-4mu$. Hence, $C$ has either a line, a smooth conic section, a twisted cubic curve or an irreducible quartic curve as a component. Suppose that $C$ contains a line~$L$. 
    Choose coordinates such that $L \coloneqq \V(x_2, x_3)$. The quadrics containing $L$ are parametrised by the matrix
    \begin{align*}
        \setlength{\arraycolsep}{6pt}
        A_1
        \coloneqq
        \begin{bmatrix}
                0  &      0 & x_{02} & x_{03} \\
                0  &      0 & x_{12} & x_{13} \\
            x_{02} & x_{12} & x_{22} & x_{23} \\
            x_{03} & x_{13} & x_{23} & x_{33}
        \end{bmatrix}
        \mkern-7mu.
    \end{align*}
    The determinant of $A_1$ is $(x_{02}x_{13} - x_{03}x_{12})^2\mkern-4mu$, so $S$ is a double quadric.
    
    Suppose that $C$ contains a smooth conic section~$K$. Choose coordinates such that $K$ is the intersection of the plane $\V(x_0)$ and the quadric $\V \big( x_1^2 + x_2^2 + x_3^2 \big)\mkern-3mu$. The space of quadrics passing through $K$ is then parametrised by 
    \begin{align*}
        \setlength{\arraycolsep}{6pt}
        A_2
        \coloneqq
        \begin{bmatrix}
            x_{00} & x_{01} & x_{02} & x_{03} \\
            x_{01} & x_{22} &   0    &   0    \\
            x_{02} &   0    & x_{22} &   0    \\
            x_{03} &   0    &   0    & x_{22}
        \end{bmatrix}
        \mkern-7mu.
    \end{align*}
    Since $\det(A_2) = \big( x_{00} x_{22} - x_{01}^2 - x_{02}^2 - x_{03}^2 \big) x_{22}^2$, it follows that $S$ is the union of a quadric and a double plane.

    The quadrics that contain a twisted cubic curve~$T$ only form a net. Thus, if $C$ contains $T\mkern-4mu$, then $S$ is a cone. Moreover, choose coordinates such that $T$ is given as the intersection of $\V \big( x_0x_2 - x_1^2 \big)$, $\V \big( x_0x_3 - x_1x_2 \big)$ and $\V \big( x_1x_3 - x_2^2 \big)\mkern-3mu$. Then $S$ is defined by
    \begin{align*}
        \setlength{\arraycolsep}{6pt}
        A_3
        \coloneqq
        \begin{bmatrix}
              0    &      \phantom{-}0     &   \phantom{-2}x_{02}  & x_{03} \\
              0    &             -2x_{02}  &  \phantom{2}{-x_{03}} & x_{13} \\
            x_{02} &  \phantom{2}{-x_{03}} &             -2x_{13}  &   0    \\
            x_{03} &   \phantom{-2}x_{13}  &      \phantom{-}0     &   0
        \end{bmatrix}
        \mkern-7mu.
    \end{align*}
    The determinant of $A_3$ is $\big( x_{02}x_{13} - x_{03}^2 \big)^{\mkern-4mu2}\mkern-4mu$, so $S$ is a double quadratic cone.
    
    Finally, a quartic curve~$Q$ is the intersection of two quadrics. If $C$ contains~$Q$, then the equation for $S$ is defined by only two variables. It follows that $S$ is the union of four planes.
\end{proof}

\noindent
We isolate the result from the first part of the proof of \cref{lem:curve-in-base-locus} for easy reference:

\begin{lemma}
    \label{lem:square-discriminant}
    Let $P$ be a linear space of quadrics and suppose that the base locus of~$P$ contains a line. Then the discriminant of $P$ is a square.
\end{lemma}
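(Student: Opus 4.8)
The plan is to move the line into standard position and then read the discriminant off an explicit matrix — essentially the computation already carried out in the first part of the proof of \cref{lem:curve-in-base-locus}, which is why the result is phrased as a separate lemma.

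First I would choose homogeneous coordinates on the ambient $\PP^3$ so that the line contained in the base locus of $P$ is $L = \V(x_2, x_3)$. A quadric $\V(q)$ contains $L$ exactly when $q(x_0, x_1, 0, 0) \equiv 0$, i.e.\ when the coefficients of $x_0^2$, $x_0x_1$ and $x_1^2$ vanish, which says that the upper-left $2 \times 2$ block of the symmetric matrix of $q$ is zero. Hence every quadric in $P$ is a specialisation of the matrix $A_1$ of \cref{lem:curve-in-base-locus}, and a matrix $A$ parametrising $P$ arises from $A_1$ by substituting linear forms in the coordinates of $P$ for the entries $x_{ij}$. Writing $A$ in block form $\left[\begin{smallmatrix} 0 & B \\ B^T & C\end{smallmatrix}\right]$ with all blocks of size $2 \times 2$, the discriminant of $P$ is given by $\det A$.

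It then remains to show $\det\left[\begin{smallmatrix} 0 & B \\ B^T & C\end{smallmatrix}\right] = \det(B)^2$. For invertible $B$ this follows from the factorisation $\left[\begin{smallmatrix} 0 & B \\ B^T & C\end{smallmatrix}\right] = \left[\begin{smallmatrix} I & 0 \\ CB^{-1} & I\end{smallmatrix}\right]\left[\begin{smallmatrix} 0 & B \\ B^T & 0\end{smallmatrix}\right]$ together with the observation that interchanging the two $2 \times 2$ row-blocks (an even permutation of the rows) turns $\left[\begin{smallmatrix} 0 & B \\ B^T & 0\end{smallmatrix}\right]$ into $\left[\begin{smallmatrix} B^T & 0 \\ 0 & B\end{smallmatrix}\right]$, whose determinant is $\det(B^T)\det(B)$. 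Since both sides are polynomials in the entries of $B$ and $C$, the identity holds for all $B$; in the present coordinates it is precisely the computation $\det(A_1) = (x_{02}x_{13} - x_{03}x_{12})^2$ from \cref{lem:curve-in-base-locus}. Substituting the linear forms back in, the discriminant of $P$ is the square of the quadratic form $\det(B)$ in the coordinates of $P$, which is the claim.

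I do not expect a genuine obstacle here: the only two points needing a line of justification are the identification of quadrics through $L$ with symmetric matrices having a vanishing $2 \times 2$ corner, and the elementary block-determinant identity. In fact the cleanest write-up may simply cite the displayed computation $\det(A_1) = (x_{02}x_{13} - x_{03}x_{12})^2$ from the proof of \cref{lem:curve-in-base-locus} and note that passing to the linear subspace $P$ merely substitutes linear forms for the $x_{ij}$, so the discriminant remains a perfect square.
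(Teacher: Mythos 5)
Your proof is correct and is essentially the paper's own argument: the lemma is explicitly isolated from the first part of the proof of \cref{lem:curve-in-base-locus}, where coordinates are normalised so that the line is $\V(x_2, x_3)$, the quadrics through it are parametrised by the matrix $A_1$ with vanishing upper-left $2 \times 2$ block, and $\det(A_1) = (x_{02}x_{13} - x_{03}x_{12})^2$ is computed directly. Your block-determinant identity $\det\left[\begin{smallmatrix} 0 & B \\ B^T & C\end{smallmatrix}\right] = \det(B)^2$ is just a harmless elaboration of that explicit computation.
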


\section{Quartic Symmetroids with a Double Line}

\noindent
Let $p_1, \ldots, p_9 \in \PP^2$ be nine points that are not the complete intersection of two cubic curves. Consider the linear system $\df$ of quartic curves passing twice through $p_1$ and once through each of the points $p_2, \ldots, p_9$. Let $\varphi \colon \PP^2 \dashrightarrow \PP^3$ be the map induced by $\df$. The image $\varphi \big( \PP^2 \big) \subset \PP^3$ is a quartic surface with a double line. Any quartic surface $S \subset \PP^3$ with a double line arises this way \cite[Article~79]{Jes16}. Consequently, $S$ is rational.

For a quartic symmetroid $S$ with a double line $L$, there are two possibilities: The points along $L$ are either generically \points{3}, or they are all \points{2}. We show that if $S$ is a generic symmetroid with a double line containing \points{3}, then $S$ has four \points{2} outside of $L$ and no further singularities. The family of such symmetroids is $21$-dimensional. Likewise, we show that if $S$ is a generic symmetroid with a line of \points{2}, then it has six \points{2} outside of $L$. Symmetroids of this type form a $19$-dimensional family.

\begin{lemma}
    \label{lem:rank2-line-implies-base-points}
    Let $S \subset \PP^3$ be a general, irreducible, quartic symmetroid with a line~$L$ of \points{2}. Then $W(S)$ has four general, coplanar base points.
\end{lemma}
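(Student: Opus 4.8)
The plan is to analyse $W(S)$ by restricting it to the double line $L$. First I would observe that for a general symmetroid $S$, which is in particular not a cone, the associated web $W(S) \subset \PP^9$ has dimension $3$, and that if $Q_0, Q_1 \in W(S)$ denote the associated quadrics at two distinct points of $L$, then $L$ is carried onto the pencil $P = \langle Q_0, Q_1 \rangle$. By hypothesis every quadric in $P$ has rank $2$, so \cref{lem:rank2-pencil} applies: writing $Q_0 = H \cup H_0'$ and $Q_1 = H \cup H_1'$ for the shared plane $H$ and the residual planes, the base locus of $P$ is $Q_0 \cap Q_1 = H \cup \ell$, where $\ell := H_0' \cap H_1'$ is a line not contained in $H$. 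Since the base locus $B$ of $W(S)$ is contained in that of the subpencil $P$, we get $B \subseteq H \cup \ell$; and since $S$ is irreducible, \cref{lem:curve-in-base-locus} prevents $B$ from containing a curve, so $B$ is finite.

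Next I would pick two further quadrics $Q_2, Q_3$ so that $Q_0, Q_1, Q_2, Q_3$ span $W(S)$, which gives
\[
    B = Q_0 \cap Q_1 \cap Q_2 \cap Q_3 = (H \cup \ell) \cap Q_2 \cap Q_3 = \big(H \cap Q_2 \cap Q_3\big) \cup \big(\ell \cap Q_2 \cap Q_3\big).
\]
On $H \cong \PP^2$ the restrictions $Q_2|_H$ and $Q_3|_H$ are conics; the only constraint imposed so far is the rank-$2$ behaviour along $L$, which involves $A_0$ and $A_1$ alone, so $A_2$ and $A_3$ are free and for general $S$ these two conics form a general pencil. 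They thus meet transversally in four points, and those four points lie in general position, since three of them on a line $\lambda$ would force $\lambda \subset Q_2 \cap Q_3$ and hence a curve in $B$. On $\ell \cong \PP^1$ the two restrictions are binary quadratic forms, which for general $S$ are coprime, so $\ell$ contributes no point to $B$. Therefore $B$ consists of exactly four coplanar points in general position, as claimed.

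The point that calls for genuine care is the genericity input above: one must check that the family of irreducible quartic symmetroids singular along a line of \points{2} is nonempty and that, for its general member, the conics $Q_2|_H$ and $Q_3|_H$ are smooth and transverse while the binary forms $Q_2|_\ell$ and $Q_3|_\ell$ are coprime. These are open conditions, and their non-vacuity can be verified by prescribing $Q_2|_H$ and $Q_3|_H$ to be a general pencil of conics and then filling in $A_2$ and $A_3$ so that $\det A$ stays irreducible; moreover a completely explicit witness is already available in \cref{ex:counterexample-converse}, a symmetroid with rank $2$ along a line and precisely four coplanar base points, none of them on the residual line $\ell$. By contrast, the reduction to the pencil $P$ and the base-locus computation are immediate once \cref{lem:rank2-pencil} and \cref{lem:curve-in-base-locus} are available, so the write-up should be short.
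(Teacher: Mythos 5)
Your argument is correct and follows essentially the same route as the paper: restrict $W(S)$ to the pencil corresponding to $L$, apply \cref{lem:rank2-pencil} to get the base locus $H \cup \ell$, and then intersect with two further spanning quadrics, finding four general points on $H$ and none on $\ell$. Your extra remarks on why the four points are in general position and on the non-vacuity of the genericity conditions (via \cref{ex:counterexample-converse}) only make explicit what the paper leaves to the word ``generically.''
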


\begin{proof}
    The line $L$ corresponds to a pencil $P \subset W(S)$ of \quadrics{2}. By \cref{lem:rank2-pencil}, the base locus of $P$ consists of a plane $H$ and a line~$l \not\subset H$.

    Let $Q_1, Q_2 \in W(S)$ be such that $Q_1$, $Q_2$ and $P$ generate $W(S)$. We may assume that $[Q_1]$ and $[Q_2]$ are not in the \locus{2} of $S$, so the plane $H$ is not contained in either of the $Q_i$. By B{\'e}zout's theorem, each $Q_i$ intersects $H$ in a conic~$C_i$. Similarly, $Q_i$ intersects the line $l$ in two points, $p_i$ and $p'_i$, each. Generically, none of the points $p_1$, $p'_1$, $p_2$ and $p'_2$ coincide. However, B{\'e}zout's theorem implies that $C_1$ and $C_2$ generically intersect in four general points.
\end{proof}

\noindent
The connection between symmetroids with a line of \points{2} and webs of quadrics with four coplanar base points, is also true in the other direction:

\begin{lemma}
    \label{lem:base-points-imply-rank2-line}
    Let $W$ be a web of quadrics in $\PP^3$ with four general, coplanar base points. Generically, the discriminant $D \subset \PP^3$ of $W\mkern-4mu$, has a line of \points{2} and six additional \points{2}.
\end{lemma}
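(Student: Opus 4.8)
The plan is to determine every \quadric{2} contained in $W$ and count them. Choose coordinates on $\PP^3$ so that the plane spanned by the four general base points $p_1,\dots,p_4$ is $H=\V(x_0)$. The quadrics through $p_1,\dots,p_4$ form a $\PP^5\subset\PP^9$, since four general points impose independent conditions. The quadrics of the form $H\cup H'=\V(x_0\ell)$, as $\ell$ ranges over all linear forms, fill out a linear subspace $\PP^3_H\subset\PP^5$, namely the image of the injection $\ell\mapsto x_0\ell$. As $W$ is a general $3$-plane in this $\PP^5$, the intersection $W\cap\PP^3_H$ is a line $L$. A general line of $\PP^3_H$ avoids the point $[\V(x_0^2)]$, so every quadric parametrised by $L$ is a genuine \quadric{2}; hence $D$ is singular along the line $L$ of \points{2}.

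Next I would locate the \points{2} lying off $L$. Any \quadric{2} in $W$ is a union $H_1\cup H_2$ of two distinct planes with $\{p_1,\dots,p_4\}\subset H_1\cup H_2$. If one of the two planes contains three of the base points, then it equals $H$ (three general points of $H$ span $H$) and the corresponding point lies on $L$; otherwise the base points split as $2+2$ between the planes, and there are exactly three such splittings. For the splitting $\{p_1p_2\mid p_3p_4\}$, write $\overline{p_1p_2}=\V(u_1,u_2)$ and $\overline{p_3p_4}=\V(v_1,v_2)$; then the \quadrics{2} with this splitting are $\V\big((\alpha u_1+\beta u_2)(\gamma v_1+\delta v_2)\big)$, and, the four forms $u_iv_j$ being independent for general base points, they form a Segre quadric surface $\Sigma_{12\mid 34}$ spanning a $3$-plane in $\PP^5$. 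Two $3$-planes in $\PP^5$ meet in at least a line, so $W\cap\langle\Sigma_{12\mid 34}\rangle$ contains a line, which for general $W$ is not contained in $\Sigma_{12\mid 34}$ and therefore cuts it in two reduced points; moreover these two points avoid the two rulings of $\Sigma_{12\mid 34}$ of the form $\{[H\cup H'']\}$, which lie on $L$. Running over the three splittings produces $3\times 2=6$ \points{2} off $L$, and the \quadrics{2} attached to two different splittings are easily seen to coincide only along $L$, so these six points are distinct.

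The step I expect to be the real work is the genericity bookkeeping behind the word ``generically'': for a general web with four general coplanar base points one must check that $W\cap\PP^3_H$ and the $W\cap\langle\Sigma_{ij\mid kl}\rangle$ have the expected dimension, that the six points produced are reduced, pairwise distinct and disjoint from $L$, that they have rank exactly $2$ (rank $1$ would force the relevant plane to be $H$ and hence put the point on $L$), and that the rank-$2$-locus of $D$ has no further component. Each of these is the failure of a proper closed condition on the Grassmannian of webs through four general coplanar points, so it suffices to exhibit a single web for which all of them hold simultaneously; \cref{ex:counterexample-converse} is exactly such a web, its discriminant having rank $2$ along a line and in six additional points. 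Apart from this step, the argument is the linear-algebra count above.
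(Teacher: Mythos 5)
Your proof is correct and follows essentially the same route as the paper's: you decompose the \locus{2} of the $\PP^5$ of quadrics through the four points into the linear $\PP^3$ of quadrics containing $H$ and the three Segre quadric surfaces coming from the $2+2$ splittings of the base points, then intersect with a general $3$-space to get a line plus $3\times 2=6$ points. The extra genericity bookkeeping and the appeal to a witness web are sensible refinements of the same argument, not a different approach.
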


\begin{proof}
    Consider the $\PP^5$ of quadrics through the four coplanar points $p_1, p_2, p_3, p_4 \in \PP^3\mkern-4mu$. We will now describe the \points{2} in $\PP^5$ by finding the \quadrics{2} passing through $p_1$, $p_2$, $p_3$ and $p_4$. First, let $H$ be the plane spanned by $p_1$, $p_2$, $p_3$ and $p_4$. Then the union of $H$ and any plane $H' \subset \PP^3$ is a \quadric{2} containing the base points. The set $X$ of all such unions forms a $\PP^3 \subset \PP^5\mkern-4mu$.

    Next, let $H_{ij}$ be a plane containing the line $L_{ij}$ spanned by the points $p_i$ and $p_j$. Let $H_{kl}$ be a plane containing the line $L_{kl}$ spanned by the remaining two points, $p_k$ and $p_l$. Then the union of $H_{ij}$ and $H_{kl}$ is a \quadric{2} containing $p_1$, $p_2$, $p_3$ and $p_4$. The set $X_{ij}$ of all such unions forms a quadratic surface in $\PP^5\mkern-4mu$. Since the points are in general position, there are in total three such surfaces of \points{2}, namely $X_{12}$, $X_{13}$ and $X_{14}$.

    By B{\'e}zout's theorem, a generic, linear $3$-space $W \subset \PP^5$ intersects $X$ in a line and the three surfaces $X_{ij}$ in two points each. This proves the claim.
\end{proof}

\noindent
In \cref{lem:base-points-imply-rank2-line}, if we omit the assumption that the coplanar base points are general, then three of them can lie on a line~$L$. In that case, the base locus contains $L$ and \cref{lem:curve-in-base-locus} states that $D$ is reducible.

The next result is immediate from \cref{lem:rank2-line-implies-base-points,lem:base-points-imply-rank2-line}:

\begin{proposition}
    \label{prop:general-rank2-line}
    Let $S \subset \PP^3$ be a general quartic symmetroid with a line of \points{2}. Then $S$ has six additional \points{2}.
\end{proposition}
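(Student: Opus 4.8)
The plan is simply to compose \cref{lem:rank2-line-implies-base-points,lem:base-points-imply-rank2-line}, the only work being to check that the word ``general'' matches up on the two sides of the bridge. First I would let $S \subset \PP^3$ be a general quartic symmetroid carrying a line $L$ of \points{2}. A general such $S$ is irreducible: reducible quartics form a proper, hence lower-dimensional, subfamily, and in any case \cref{lem:curve-in-base-locus} shows that reducibility would force a curve into the base locus of $W(S)$, which the construction below rules out generically. Thus \cref{lem:rank2-line-implies-base-points} applies to $S$ and shows that $W(S)$ has four coplanar base points $p_1, p_2, p_3, p_4$ in general position in their common plane; in particular no three of them are collinear.

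Next I would run the implication in the opposite direction. Since $S$ is exactly the discriminant of the web $W \coloneqq W(S)$, and the passage $W \mapsto D(W)$ maps the family of webs with four general coplanar base points onto the family of quartic symmetroids with a line of \points{2} (this is the matching-of-generality point: one checks that the relevant parameter counts agree, so that a general $S$ corresponds to a general such $W$), \cref{lem:base-points-imply-rank2-line} tells us that $D = S$ has a line of \points{2} together with six further \points{2}. The line produced by that lemma is $W \cap X$, the intersection of $W$ with the $\PP^3$ of quadrics of the form $H \cup H'$ with $H$ the plane of the $p_i$; on the other hand, the pencil $P \subset W(S)$ cut out by $L$ consists precisely of the quadrics $H \cup H'$ with $H' \supset l$, so $P = W \cap X$ and the line of \points{2} coincides with $L$. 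Hence the six extra \points{2} lie off $L$, which is the assertion.

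The only genuine obstacle is the bookkeeping about genericity and multiplicities: one must ensure that, for general $S$, the six points coming from the surfaces $X_{12}, X_{13}, X_{14}$ in \cref{lem:base-points-imply-rank2-line} are distinct from one another and from $L$, and that no degeneration occurs (three base points collinear, the base locus acquiring a curve, or some $X_{ij}$ meeting $W$ non-transversally). Each of these is a closed condition on the appropriate parameter space, so it is violated only on a proper closed subset; restricting to the complement, which still dominates the family of symmetroids with a line of \points{2}, yields the claim. No computation beyond what is already contained in the two cited lemmas is required.
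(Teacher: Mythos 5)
Your proof is correct and follows exactly the route the paper takes: the paper states that the proposition is immediate from \cref{lem:rank2-line-implies-base-points,lem:base-points-imply-rank2-line}, and your argument is precisely that composition. The extra care you take in matching the genericity hypotheses and in identifying the line $W \cap X$ with $L$ is sound and only makes explicit what the paper leaves implicit.
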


\noindent
%Note that the surface $S$ in \cref{ex:counterexample-converse} is a generic quartic symmetroid with a line of \points{2}.
%
The construction indicated by \cref{lem:rank2-line-implies-base-points,lem:base-points-imply-rank2-line} allows us to count the number of quartic symmetroids with a line of \points{2}:

\begin{proposition}
    \label{prop:dimension-rank2-line}
    The family of quartic symmetroids with a line of \points{2} is $19$-dimensional.
\end{proposition}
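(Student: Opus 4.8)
The plan is to count parameters via the correspondence established in the previous two lemmas: such symmetroids arise (generically) as discriminants of webs $W \subset \PP^5$ of quadrics through four general coplanar base points, so I would compute the dimension of the space of such data and then subtract the dimension of the fibres of the map sending the data to the symmetroid in $\PP^{34}$ — or, more cleanly, I would parametrise the whole family directly and count. First I would fix four general coplanar points $p_1, \dots, p_4 \in \PP^3$: choosing a plane $H \cong \PP^2$ costs $3$ parameters, and choosing four general points in that plane costs $8$ more, for a total of $11$. Given such a configuration, the quadrics through $p_1, \dots, p_4$ form a $\PP^5$, and the webs $W$ inside it are parametrised by $\GG(3,5)$, which by \eqref{eq:dim-Grassmannian} has dimension $(3+1)(5-3) = 8$. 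This gives $11 + 8 = 19$ parameters for the incidence data $(p_1, \dots, p_4, W)$.

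Next I would argue that the rational map from this $19$-dimensional parameter space to $\PP^{34}$, sending $(p_1, \dots, p_4, W)$ to the discriminant $D = S$, is generically finite onto its image — indeed, by \cref{lem:rank2-line-implies-base-points} a general symmetroid $S$ with a line of \points{2} determines, via $W(S)$, a web with four coplanar base points, and by \cref{lem:base-points-imply-rank2-line} this web reproduces $S$. One must check that $W(S)$ recovers $W$ up to the finite ambiguity of matrix representations; here I would invoke the standing convention that we only use properties common to all associated linear systems with the same \loci{k}, and note that for these symmetroids the relevant configuration (the line of \points{2} forcing a pencil of \quadrics{2} with base locus a plane and a line, by \cref{lem:rank2-pencil}) is rigid enough that the base points are determined by $S$. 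Hence the generic fibre is finite (in fact the action of $\mathrm{PGL}$-type changes of coordinates on $\ybf$ is already quotiented out by passing from the matrix to the surface), and the family has dimension $19$.

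The main obstacle I anticipate is the finiteness-of-fibres step: showing that distinct incidence data $(p_1,\dots,p_4, W)$ generically give distinct symmetroids, equivalently that $S$ determines its associated web up to finite ambiguity. The cheap way around a delicate representation-theoretic argument is a dimension squeeze: \cref{prop:general-rank2-line} already tells us such an $S$ carries a line of \points{2} plus six isolated \points{2}, and one checks the family cannot have dimension $> 19$ because the construction above surjects onto it, while it cannot be strictly smaller without the generic fibre of the parametrisation being positive-dimensional — which would mean a one-parameter family of genuinely different webs all having the same discriminant, contradicting that the base locus (a plane together with a transversal line, cut out by the \quadrics{2} in the pencil $P$) is read off from $S$ together with the \locus{2}. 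I would phrase the final count as: $\dim(\text{family}) = \underbrace{3}_{H} + \underbrace{8}_{p_i \in H} + \underbrace{8}_{\GG(3,5)} = 19$, with the generic finiteness of $(p_1,\dots,p_4,W) \mapsto S$ supplying the reverse inequality.
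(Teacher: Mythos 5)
Your proposal is correct and follows essentially the same route as the paper: count the four general coplanar base points ($11$ parameters, whether decomposed as $3+3+3+2$ or as $3$ for the plane plus $8$ for the points in it) and then the choice of a web in $\GG(3,5)$ ($8$ parameters by \eqref{eq:dim-Grassmannian}), giving $11+8=19$. The only difference is that you explicitly address the generic finiteness of the map from incidence data to symmetroids, a point the paper leaves implicit after \cref{lem:rank2-line-implies-base-points,lem:base-points-imply-rank2-line}; your extra care there is reasonable but not a departure in method.
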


\begin{proof}
    \cref{lem:rank2-line-implies-base-points,lem:base-points-imply-rank2-line} imply that a generic quartic symmetroid with a line of \points{2} is obtained by choosing four coplanar points $p_1, p_2, p_3, p_4 \in \PP^3\mkern-4mu$ in general position, and then choosing a general $\PP^3$ in the $\PP^5$ of quadrics passing through $p_1$, $p_2$, $p_3$ and $p_4$.
    
    In how many ways can $p_1$, $p_2$, $p_3$ and $p_4$ be chosen? The first three points can be chosen freely, and the last point $p_4$ must lie in the plane spanned by $p_1$, $p_2$ and $p_3$. Hence four coplanar points in $\PP^3$ correspond to a point in $\PP^3 \times \PP^3 \times \PP^3 \times \PP^2\mkern-4mu$, which is an $11$-dimensional space. There are points in $\PP^3 \times \PP^3 \times \PP^3 \times \PP^2$ that do not correspond to four general, coplanar points, since the same point in $\PP^3$ is taken more than once or since the points are not in a general position. However, excluding these exceptions do not affect the dimension.
    
    By \eqref{eq:dim-Grassmannian}, the Grassmannian $\GG(3, 5)$ of linear $3$-spaces in the $\PP^5$ of quadrics through $p_1$, $p_2$, $p_3$ and $p_4$, is $8$-dimensional. In total, the family of quartic symmetroids with a line of \points{2} has dimension $11 + 8 = 19$.
\end{proof}

\noindent The symmetroids with a line of \points{2} only make up a small fraction of the quartic symmetroids with a double line:

\begin{proposition}
    \label{thm:dimension-double-line}
    The family of irreducible quartic symmetroids with a double line is $21$-dimensional.
\end{proposition}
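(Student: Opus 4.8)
The plan is to split the family according to the two kinds of double line: the general such symmetroid is singular along a line that either consists generically of \points{3} \dash call this subfamily $\mathcal{F}$ \dash or is entirely a line of \points{2}. By \cref{prop:dimension-rank2-line} the second subfamily has dimension $19$, so it suffices to prove $\dim \mathcal{F} = 21$; the whole family then has dimension $\max\{21, 19\} = 21$, and since $\mathcal{F}$ is $21$-dimensional it is not contained in the closure of the $19$-dimensional stratum, so nothing is lost in taking the union.

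For $\mathcal{F}$ I would first establish a parametrisation. Let $S$ be an irreducible quartic symmetroid, not a cone, singular along a line $L$ of \points{3}, with web $W \coloneqq W(S)$ and base locus $B$. The line $L$ is a pencil $P \subset W$ whose general member is a \quadric{3}, so \cref{lem:rank3-pencil} applies. Its case~$(2)$ cannot occur: there the singular points of the \quadrics{3} in $P$ sweep out a line $\ell$, but $L \subseteq \Sing(S)$ forces $\Sing(Q) \in B$ for every \point{3} $[Q]$ on $L$ by \cref{lem:singularity-is-a-base-point}, hence $\ell \subseteq B$, and then $S$ is reducible by \cref{lem:curve-in-base-locus}. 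So we are in case~$(1)$: $P$ is a pencil of quadric cones with a common vertex $p$, and \cref{lem:singularity-is-a-base-point} again gives $p \in B$, so $W$ lies in the $\PP^8$ of quadrics through $p$. Conversely, from a point $p \in \PP^3$, a pencil $P$ of cones with vertex $p$ whose general member has rank $3$, and a linear $3$-space $W \subset \PP^8$ containing $P$, \cref{lem:singular-at-base-point} shows that $D(W)$ is singular along the line $P$; for general such data one checks that $D(W)$ is an irreducible quartic, not a cone, whose singular line $P$ consists generically of \points{3}, so that $D(W) \in \mathcal{F}$.

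It then remains to count. The point $p$ moves in a $3$-dimensional family; the quadric cones with vertex $p$ form a $\PP^5$ (the conics in the residual $\PP^2$), so the pencils $P$ form $\GG(1,5)$, of dimension $8$; and the linear $3$-spaces $W \subset \PP^8$ containing the fixed line $P$ correspond to lines in $\PP^8 / P \cong \PP^6$, so they form $\GG(1,6)$, of dimension $10$. Hence $\mathcal{F}$ is the image of a parameter space of dimension $3 + 8 + 10 = 21$, so $\dim \mathcal{F} \leqslant 21$. I expect the main difficulty to lie in the reverse inequality, namely in showing that the dimension does not drop, i.e.\ that the assignment $(p, P, W) \mapsto D(W)$ is generically finite onto $\mathcal{F}$. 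Since $L$ is intrinsic to $S$ and both $p$ and $P$ are recovered from $W$, this reduces to showing that a general $S \in \mathcal{F}$ admits only finitely many associated webs \dash delicate because the associated web is not unique in general, but expected here just as for $10$-nodal quartic symmetroids (cf.\ \cite[Proposition~11]{Ble+12}) and confirmable on an explicit example. Granting this, $\dim \mathcal{F} = 21$, and therefore the family of irreducible quartic symmetroids with a double line is $21$-dimensional.
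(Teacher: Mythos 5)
Your proof follows essentially the same route as the paper: the same reduction via \cref{lem:singularity-is-a-base-point,lem:curve-in-base-locus} (and \cref{lem:rank3-pencil}) to webs $W$ inside the $\PP^8$ of quadrics through a common vertex $p$, meeting the $5$-space $X_p$ of quadrics singular at $p$ in a line, and the same count $3 + 18 = 21$ \dash the paper obtains the $18$ directly from the incidence formula \eqref{eq:dim-fixed-intersection}, while you get $8 + 10$ by fibring over the pencil $P$, which is the same computation. The generic finiteness of $W \mapsto D(W)$ that you flag but do not prove is left implicit in the paper as well, so nothing is missing relative to its argument.
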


\begin{proof}
    Let $S$ be a quartic symmetroid with a double line~$L$. The case where~$L$ consists of only \points{2} is covered by \cref{prop:dimension-rank2-line}. Assume therefore that~$L$ contains a \point{3}~$[Q]$. Let $p$ be the singular point of $Q$. \cref{lem:singularity-is-a-base-point} implies that $p$ is a base point for $W(S)$. Moreover, all the quadrics along $L$ are singular at~$p$. Otherwise, the singular points form a line, which by \cref{lem:singularity-is-a-base-point} is contained in the base locus of $W(S)$. \cref{lem:curve-in-base-locus} then states that $S$ is reducible.

    Consider the $\PP^8$ of all quadrics through $p$, and let $D$ be its discriminant. It imposes three conditions to require that a quadric passing through $p$, is singular at $p$. Hence the set $X_p \subset \PP^8$ of all quadrics in $\PP^3$ that are singular at $p$, is a linear $5$-space. By \cref{lem:singular-at-base-point}, $X_p$ is contained in $\Sing(D)$. Let $W \subset \PP^8$ be a linear $3$-space that intersects $X_p$ in a line. Then the discriminant of $W$ is a quartic symmetroid with a double line.

    Consider the subvariety $Y \subseteq \GG(k, n)$ consisting of the linear, $k$-dimensional subspaces $K$ that intersect a fixed linear, $m$-dimensional subspace $M$, such that $\dim(K \cap M) \geqslant l\mkern-2mu$. By \cite[Example~11.42]{Har92}, the dimension of $Y$ is given by the following formula:
    \begin{align}
        \label{eq:dim-fixed-intersection}
        \dim(Y) = (l + 1)(m - l) + (k - l)(n - k).
    \end{align}
    Thus the set of $3$-spaces in $\PP^8$ that meet $X_p$ in a line, is $18$-dimensional. In total, we obtain a $21$-dimensional family of quartic symmetroids with a double line, by letting the base point $p$ be arbitrary in $\PP^3\mkern-4mu$.
\end{proof}

\noindent
The construction from the proof of \cref{thm:dimension-double-line} allows us to determine the number of extra singularities:

\begin{proposition}
    \label{prop:general-rank3-line}
    Let $S \subset \PP^3$ be a general quartic symmetroid that is singular along a line~$L$ of \points{3}. Then $S$ has four additional nodes.
\end{proposition}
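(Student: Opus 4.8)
The plan is to analyse the symmetroid coming from the construction in the proof of \cref{thm:dimension-double-line}. Fix the base point \(p\) of \(W(S)\) and take coordinates with \(p = [0:0:0:1]\). A general web meeting \(X_p\) in a line is \(W = \langle Q_1, Q_2, Q_3, Q_4\rangle\), where \(Q_1, Q_2\) span the line \(L \subset X_p\) (so their matrices have vanishing last row and column) and \(Q_3, Q_4\) are general quadrics through \(p\) (so their matrices have vanishing lower-right entry). The generic matrix of \(W\) then has the block shape
\begin{equation*}
    \mathcal A(\lambda) = \begin{pmatrix} M(\lambda) & v(\lambda) \\ v(\lambda)^T & 0 \end{pmatrix}, \qquad M(\lambda) = \textstyle\sum_{i=1}^{4} \lambda_i M_i, \qquad v(\lambda) = \lambda_3 v_3 + \lambda_4 v_4,
\end{equation*}
with the \(M_i\) general symmetric \(3 \times 3\) matrices and \(v_3, v_4 \in \C^3\) general; thus \(S = \V(\det \mathcal A)\) and \(L = \V(\lambda_3, \lambda_4)\).

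I would first determine \(\Sing(S)\). The line \(L\) is the double line, by the construction. No isolated \point{3} is singular on \(S\): the pencil \(L\) has base locus four lines through \(p\) by case~(1) of \cref{lem:rank3-pencil}, and meeting these with the general \(Q_3, Q_4\) leaves only \(\{p\}\), so by \cref{lem:singularity-is-a-base-point} a \point{3} is singular on \(S\) exactly when its quadric is singular at \(p\), hence only on \(L\). Therefore every isolated singularity of \(S\) has rank at most \(2\); for general \(W\) none has rank \(1\) since \(W \cap X_1 = \varnothing\), so the isolated singularities are precisely the \points{2} off \(L\), and generically ordinary nodes.

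The remaining, and main, task is to count them. The \locus{2} of \(S\) is the scheme \(W \cap X_2\) cut out by the \(3 \times 3\) minors of \(\mathcal A\); for a general \(W\) in the family it is zero-dimensional, so by B\'ezout's theorem (\(W\) is a \(\PP^3 \subset \PP^9\)) it has length \(\deg X_2 = 10\). Along \(L\) this scheme is supported at the three zeros \(n_1, n_2, n_3\) of the binary cubic \(\det(\lambda_1 M_1 + \lambda_2 M_2)\) --- the \locus{2} of the pencil \(L\), of length \(3\) by case~(1) of \cref{lem:rank3-pencil} --- and each \(n_i\) is a \quadric{2} singular at \(p\), i.e.\ a point of \(X_p \cap X_2\). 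The crucial observation is that the hyperplane \(\PP^8 \subset \PP^9\) of quadrics through \(p\) is tangent to \(X_2\) at \(n_i\): writing \(n_i = [H_1 H_2]\) with \(p \in H_1 \cap H_2\), the tangent space \(T_{n_i}X_2\) is spanned by the quadrics \(H_1\ell + H_2\ell'\), and every such quadric vanishes at \(p\). Hence \(X_2 \cap \PP^8\) has a double point at each \(n_i\), so \(W\) --- which passes through \(n_i\) --- meets \(X_2\) there with multiplicity at least \(2\), and exactly \(2\) for general \(W\). The three points absorb \(2 \cdot 3 = 6\) of the ten, leaving \(10 - 6 = 4\) points of \(W \cap X_2\) off \(L\): the four isolated nodes, all \points{2}.

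I expect the main obstacle to be making this last step airtight, namely the genericity inputs: that \(W \cap X_2\) is zero-dimensional (equivalently, that the curve \(\Gamma\) along which the discriminant cubic \(\V(\det M)\) is tangent to \(S\) is not contained in the \locus{2}), that the local multiplicity of \(W \cap X_2\) at each \(n_i\) is exactly \(2\) rather than larger, and that the four residual points are distinct ordinary nodes. Each of these is an open condition on the irreducible parameter space of the construction, so it is enough to exhibit one explicit symmetroid of this type with precisely four simple nodes off its double line; irreducibility of the family then yields the statement for the general member.
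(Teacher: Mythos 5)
Your argument is essentially the paper's own proof: both count the length-$10$ intersection $W(S) \cap X_2$ via B\'ezout, observe that the three \points{2} on $L$ lie where $X_2$ meets the locus of quadrics singular at the base point $p$ so that each absorbs intersection multiplicity $2$ (you phrase this as tangency of the hyperplane of quadrics through $p$ to the sixfold $X_2 \subset \PP^9$; the paper computes the jump of the tangent space of $X_2$ inside the $\PP^8$ of quadrics through $p$ \dash the same fact), and conclude that $10 - 3\cdot 2 = 4$ points remain off $L$. The genericity points you flag at the end are treated in the paper at the same level \dash the multiplicity is asserted to be generically exactly $2$, with explicit examples elsewhere confirming the count \dash so your proposal is correct and matches the paper's route.
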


\begin{proof}
    We continue with the notation from the proof of \cref{thm:dimension-double-line}. Choose coordinates such that $p \coloneqq [1 : 0 : 0 : 0]$. Then $\PP^8$ of quadrics that pass through $p$ is parametrised by the matrix
    \begin{align*}
        A
        \coloneqq
        \begin{bmatrix}
               0   & x_{01} & x_{02} & x_{03} \\
            x_{01} & x_{11} & x_{12} & x_{13} \\
            x_{02} & x_{12} & x_{22} & x_{23} \\
            x_{03} & x_{13} & x_{23} & x_{33}
        \end{bmatrix}
        \mkern-7mu.
    \end{align*}
    Furthermore, we have that $X_p = \V(x_{01}, x_{02}, x_{03})$ and the \locus{2} $X_2$ of $D$ is a fivefold of degree~$10$.

    Using this explicit description, we compute that the tangent space at a general point on $X_2$ is $5$-dimensional, but the tangent space at a point on $X_2$ contained in $X_p$ is $6$-dimensional. In fact, $\Sing(X_2) = X_2 \cap X_p$ set-theoretically. Since $L \subset X_p$, it contains in general three \points{2}, $p_1$, $p_2$ and $p_3$. This can either be seen from the matrix $A$ or \cref{lem:rank3-pencil}. The web $W(S) \subset \PP^8$ intersects the three tangent spaces $T_{p_i}X_2$ in a line each. Thus the intersection multiplicity of $W(S)$ and $X_2$ is at least~$2$ at each of the points $p_1$, $p_2$ and~$p_3$. Since $W(S) \cap X_2$ has length~$10$, it follows that $W(S)$ meets $X_2$ in at most four points outside of $L$. Moreover, the intersection multiplicity of $W(S)$ and $X_2$ is generically $2$ at the points $p_1$, $p_2$ and $p_3$. This proves the claim.
\end{proof}

\begin{figure}[hbtp]
    \centering
    \begin{minipage}{0.5\textwidth-1ex}
        \centering
        \includegraphics[height = 0.28\textheight]{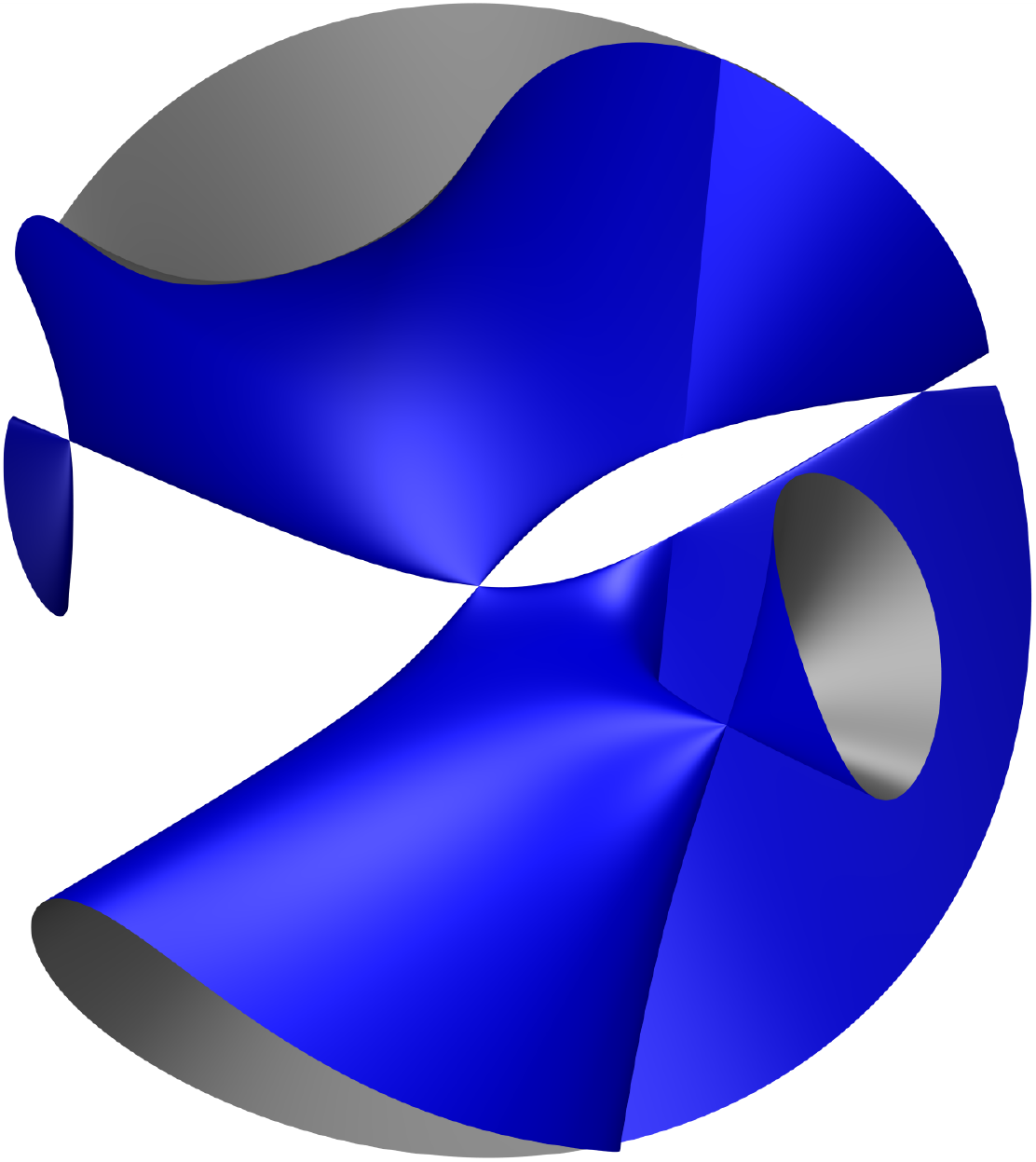}
        \subcaption{Double line of \points{3}}
        \label{fig:rank3-line}
    \end{minipage}
    \begin{minipage}{0.5\textwidth-1ex}
        \centering
        \includegraphics[height = 0.28\textheight]{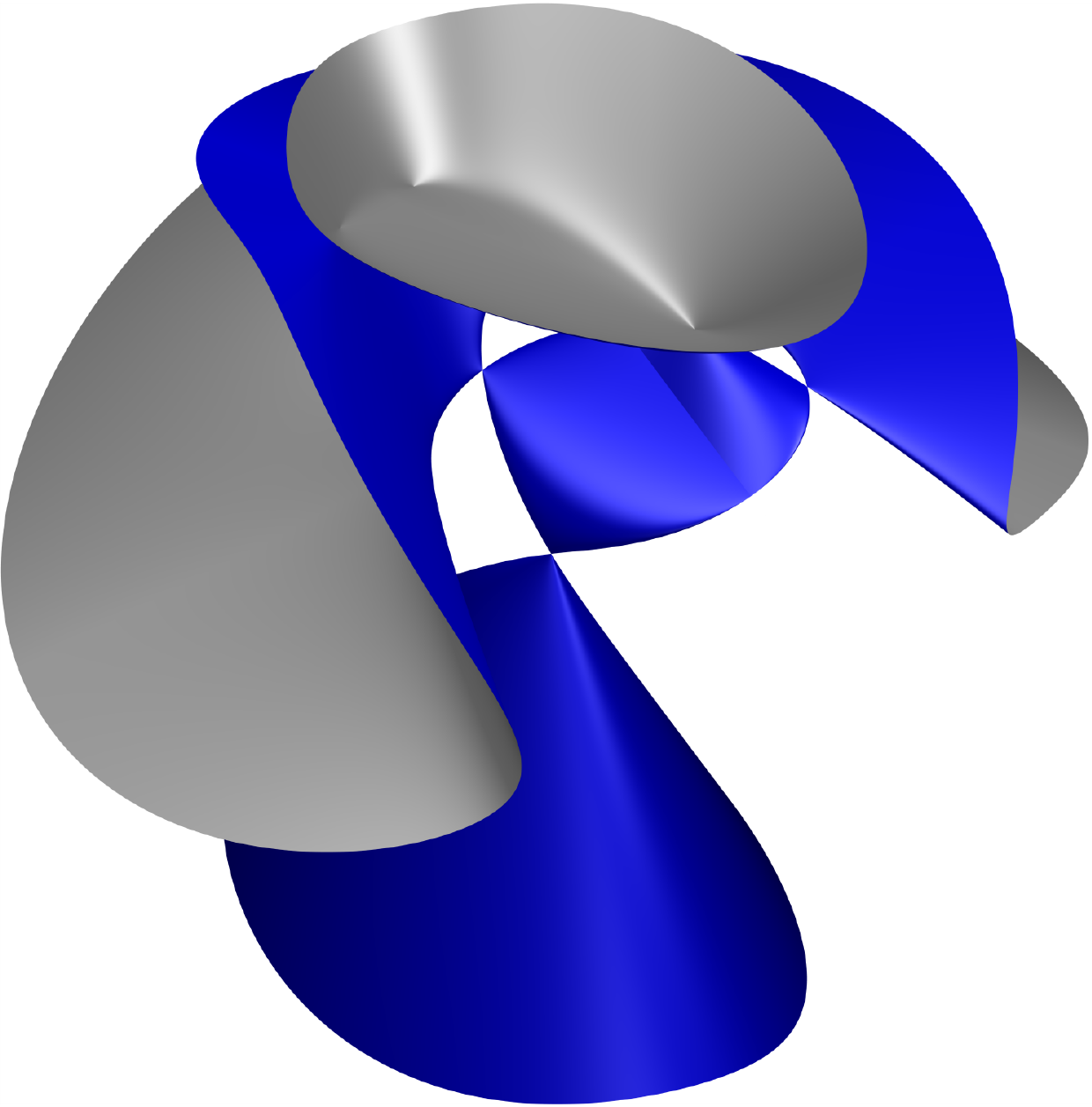}
        \subcaption{Double line of \points{2}}
        \label{fig:rank2-line}
    \end{minipage}
    \caption{General quartic symmetroids with a double line. The surface in \hyperref[fig:rank3-line]{(A)} has four real nodes, and the surface in \hyperref[fig:rank2-line]{(B)} has six real nodes.}
\end{figure}

\section{Quartic Symmetroids with a Double Conic Section}

\noindent Let $S$ be an irreducible quartic surface $S \subset \PP^3$ with a double conic. Then $S$ can be realised as the projection of a quartic del Pezzo surface $P \subset \PP^4$ \cite[Theorem~8.6.4]{Dol12}. Consequently, $S$ is rational.

We show that if $S$ is an irreducible quartic symmetroid with a double conic $C\mkern-1mu$, then there are no \points{3} on $C\mkern-1mu$. Furthermore, if $S$ is a generic symmetroid with a double conic, then it has four \points{2} outside of $C\mkern-1mu$.

\begin{proposition}
    \label{thm:double-conics-have-rank2}
    Let $S \subset \PP^3$ be an irreducible quartic symmetroid that is double along a smooth conic section~$C\mkern-1mu$. Then $C$ is contained in the \locus{2} of $S$.
\end{proposition}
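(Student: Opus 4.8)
The plan is to argue by contradiction, using the two preceding lemmas on base loci. Suppose that $C$ is \emph{not} contained in the \locus{2} of $S$. The \locus{2} is closed and $C$ is an irreducible curve, so a general point $x \in C$ satisfies $\rank A(x) = 3$: it is at least $3$ because $x$ lies outside the \locus{2}, and at most $3$ because $x \in C \subseteq \Sing(S) \subseteq S$ and every point of the symmetroid has corank at least $1$. Hence for general $x \in C$ the associated quadric $Q(x)$ has corank exactly $1$, so it has a unique singular point $p(x) \coloneqq \Sing Q(x)$, and $p(x)$ depends algebraically on $x$ (it is the column space of $\operatorname{adj} A(x)$).

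Now apply \cref{lem:singularity-is-a-base-point} with the associated linear system $W(S)$ in the role of the ambient space of quadrics, so that $S$ is its discriminant: since $[Q(x)] \in \Sing(S)$ is a \point{3}, the point $p(x)$ lies in the base locus $B$ of $W(S)$. Letting $x$ range over the dense open subset of \points{3} of $C$, the closure $\overline{\{\, p(x) \,\}}$ is contained in $B$ and is either a single point or an irreducible curve. In the second case $B$ contains a curve, so $S$ is reducible by \cref{lem:curve-in-base-locus}, contradicting the hypothesis on $S$.

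It remains to exclude the case where $p(x) = p_0$ is constant. The condition ``$Q(x)$ is singular at $p_0$'' is linear in $x$ and holds for general $x \in C$, hence for every $x \in C$. Therefore the conic $C' \coloneqq \{\, [Q(x)] : x \in C \,\}$ --- which is the image of $C$ under the linear parametrisation $\PP^3 \cong W(S) \subseteq \PP^9$, and so is again a smooth conic --- lies in the linear $5$-space $X_{p_0} \subset \PP^9$ of quadrics singular at $p_0$. Its linear span $\langle C' \rangle$ is a plane $\Pi$, and since both $W(S)$ and $X_{p_0}$ are linear subspaces of $\PP^9$ containing $C'$, we get $\Pi \subseteq W(S) \cap X_{p_0}$. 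Every quadric parametrised by a point of $\Pi$ is singular at $p_0$, hence lies on the discriminant; transporting back along $\PP^3 \cong W(S)$, the surface $S$ contains a plane. A quartic surface in $\PP^3$ containing a plane is reducible, which is the desired contradiction. Hence $C$ is contained in the \locus{2} of $S$.

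\textbf{Main obstacle.} The two invocations of \cref{lem:singularity-is-a-base-point,lem:curve-in-base-locus} are routine; the step that needs a genuine idea is the constant case, where one must pass to the linear span of the conic and use that a quartic surface containing a plane is reducible. A minor point to address along the way is that $\dim W(S) = 3$, so that $\PP^3 \cong W(S)$ is an isomorphism rather than merely a projection: if $\dim W(S) < 3$ the symmetroid would be a cone over a plane quartic, whose singular locus consists of lines through the vertex and hence is never a smooth conic.
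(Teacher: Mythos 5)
Your proof is correct and takes essentially the same route as the paper: \points{3} on $C$ produce base points of $W(S)$ via \cref{lem:singularity-is-a-base-point}, and these base points either sweep out a curve (excluded by \cref{lem:curve-in-base-locus}) or coincide, which forces a plane inside $S$ and contradicts irreducibility. The only cosmetic difference is in the coincidence case: the paper joins two \nodes{3} with a common apex by a line $L \subset \Sing(S)$ and runs a degree count on $H \cap S \supseteq L + 2C$, whereas you obtain the plane $\langle C \rangle \subseteq S$ directly from the linearity of the ``singular at $p_0$'' condition --- both are fine.
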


\begin{proof}
    Assume for contradiction that $C$ is not contained in the \locus{2} of $S$. A generic point $[Q_1] \in C$ is then a \node{3}. By \cref{lem:singularity-is-a-base-point}, the singular point~$p$ of $Q_1$ is a base point for $W(S)$. If $[Q_2] \in C$ is another point such that $Q_2$ is singular at $p$, then all the quadrics in the pencil $\langle Q_1, Q_2 \rangle$ are singular at $p$. By \cref{lem:singular-at-base-point}, the line $L$ spanned by $[Q_1]$ and $[Q_2]$ is contained in $\Sing(S)$. Since $C$ is smooth, $L$ is not a component of $C$. Let $H$ be the plane spanned by $C$. The intersection of $H$ and $S$ contains at least $L$ and two times $C\mkern-1mu$, so $H$ must be a component in $S$. This contradicts the irreducibility of $S$. In conclusion, the different \nodes{3} on $C$ give rise to different base points of $S$. Hence, the base locus of $W(S)$ contains a curve. \cref{lem:curve-in-base-locus} implies that $S$ is reducible, which is impossible.
\end{proof}

\begin{remark}
    \cref{thm:double-conics-have-rank2} is not true for singular conic sections, as shown by \cref{ex:2-representations}.
\end{remark}

\noindent We have an analogue to \cref{lem:rank2-line-implies-base-points} for quartic symmetroids with a double conic:

\begin{proposition}
    Let $S \subset \PP^3$ be a general quartic symmetroid with a smooth conic section $C$ of \points{2}. Then $W(S)$ has four general base points.
\end{proposition}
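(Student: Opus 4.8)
The plan is to follow the proof of \cref{lem:rank2-line-implies-base-points}, with the conic of \quadrics{2} replacing the pencil of \quadrics{2} that appeared there. Regard $C$ as a conic inside $W(S)$; by \cref{thm:double-conics-have-rank2} every point of $C$ is a \point{2}, and a \quadric{2} in $\PP^3$ is a union of two planes, so each $[Q] \in C$ is a pair of planes $Q = H(Q) \cup H'(Q)$. Let $\Pi \coloneqq \langle C \rangle \subset W(S)$ be the plane spanned by $C$, a net of quadrics. Since a general symmetroid with a double conic is not a cone, $W(S)$ is a web, so we may fix a quadric $[Q_0] \in W(S) \setminus \Pi$ with $W(S) = \langle \Pi, [Q_0] \rangle$; the base locus of $W(S)$ is then the intersection of the base locus of $\Pi$ with $Q_0$.

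The first step is to identify the net $\Pi$. As $S$ is irreducible, $\Pi \not\subset S$, so $S \cap \Pi$ is a plane quartic curve; it is singular along $C$ because $S$ is, and a plane quartic singular along a smooth conic is that conic doubled. Hence $S \cap \Pi = 2C$, and in particular the \quadrics{2} in $\Pi$ are exactly the points of $C$. Thus the pairs of planes $H(Q) \cup H'(Q)$, $[Q] \in C$, form a curve of degree~$2$ in the $\PP^9$ of all quadrics. The crucial claim is that, as $[Q]$ runs over $C$, the planes $H(Q)$ sweep out the pencil of planes through a fixed line $\lambda$, while the planes $H'(Q)$ sweep out the pencil through a skew line $\lambda'$. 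Granting this, every quadric in $C$ contains $\lambda \cup \lambda'$; as containment of a fixed pair of lines is a linear condition, so does every quadric of $\Pi$, and hence the base locus of $\Pi$ contains the two skew lines $\lambda$ and $\lambda'$.

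To prove the claim I would first use that $C$ has degree~$2$ to constrain the two families of planes: each of $\{H(Q)\}$ and $\{H'(Q)\}$ is then either constant, or sweeps out the pencil of planes through a fixed line, or \dash in the case where the two plane-components cannot be separated globally \dash sweeps a conic of planes; and I would then exclude the degenerate possibilities. If some fixed plane $H_0$ is contained in every \quadric{2} of $C$ \dash which is what happens when one of the families is constant \dash then every quadric of $\Pi$ vanishes on $H_0$, so the base locus of $W(S)$ contains the conic $H_0 \cap Q_0$ and $S$ is reducible by \cref{lem:curve-in-base-locus}. In the remaining degenerate configurations every \quadric{2} of $C$ is singular along a common line, or at a common point, so $\Pi$ is contained in the linear system of quadrics with that property; but quadrics singular along a line have rank~$\leqslant 2$ and quadrics singular at a point have rank~$\leqslant 3$, so $\Pi$ would be contained in $S$, which is impossible since $\Pi$ is a plane and $S$ an irreducible quartic. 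This leaves exactly the asserted skew-lines configuration.

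Finally I would intersect with $Q_0$. The base locus of $\Pi$ is $\lambda \cup \lambda'$ together with at most finitely many further points, and for a general $S$ those extra points do not lie on $Q_0$; hence the base locus of $W(S)$ is $(\lambda \cap Q_0) \cup (\lambda' \cap Q_0)$, which is two points on each of the skew lines $\lambda$ and $\lambda'$, distinct because $S$ is general. Four points lying two on each of two skew lines are never coplanar, so they are in general position. I expect the main obstacle to be the crucial claim of the third paragraph \dash extracting from the degree-$2$ condition that the two plane-families are full pencils through skew lines, and dispatching all the degenerate configurations \dash together with the bookkeeping that genericity of $S$ really yields four distinct base points and no others.
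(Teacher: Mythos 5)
Your argument is correct, but the way you produce the two base lines differs from the paper's. The paper fixes one \quadric{2} $Q = H_1 \cup H_2$ on $C$ and restricts the net $N = \langle C \rangle$ to each plane $H_i$: since $Q$ restricts to zero there, the image is a pencil of line pairs in $H_i \cong \PP^2\mkern-4mu$, and \cref{lem:rank2-pencil} (applied with $n = 2$) hands over a common line $L_i \subset H_i$ in its base locus; the two lines $L_1, L_2$ then lie in the base locus of $N\mkern-2mu$, and cutting with a fourth generator gives the four points. You instead classify the conic $C$ inside the rank-$2$ locus of the $\PP^9$ of quadrics globally, via the degree-$2$ splitting of the two plane-components, and conclude that generically $C = \big\{[(t_0a + t_1b)(t_0a' + t_1b')]\big\}$ with base lines $\lambda = \V(a, b)$ and $\lambda' = \V(a', b')$ \dash which are in fact the same two lines the paper finds. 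Both proofs are sketches at roughly the same level of rigour, and both correctly dispatch the degenerate configurations (a fixed common plane via \cref{lem:curve-in-base-locus}; a common singular point or line forcing $\Pi \subset S$). What your route buys is transparency at the end: the skewness of $\lambda$ and $\lambda'$ makes the non-coplanarity \dash hence the genuine generality \dash of the four base points immediate, which is precisely the feature that separates this case from \cref{lem:rank2-line-implies-base-points} and which the paper leaves implicit. What it costs is that the monodromy and degree bookkeeping in your ``crucial claim'' has to be done by hand, where the paper simply reuses \cref{lem:rank2-pencil}; if you write this up, make sure the subcase $\lambda \cap \lambda' \neq \varnothing$ is explicitly folded into the ``common singular point'' exclusion, since that is exactly the configuration that would spoil generality of the base points.
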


\begin{proof}
    Let $Q \coloneqq H_1 \cup H_2$ and $Q' \coloneqq H'_1 \cup H'_2$ be quadrics corresponding to two points on $C\mkern-1mu$, where $H_1$, $H_2$, $H'_1$ and $H'_2$ are planes. Generically, $Q$ and $Q'$ intersect in four lines, $L_1, L'_1 \subset H_1$ and $L_2, L'_2 \subset H_2$. These lines constitute the base locus of the pencil $\langle Q, Q' \rangle$ generated by $Q$ and $Q'\mkern-5mu$.
    
    Letting $Q'$ run through all of the points in $C\mkern-1mu$, we obtain a pencil $P_i$ of line pairs $L_i \cup L'_i$ in both planes $H_i$. Since there are no smooth conic sections in $P_i$, it follows from \cref{lem:rank2-pencil} that the base locus of $P_i$ must contain a line, say $L_i$. Let $N$ be the net of quadrics corresponding to the plane spanned by $C$. Then $L_1$ and $L_2$ are contained in the base locus of $N\mkern-4mu$. The web $W(S)$ is generated by $N$ and a quadric $K \not\in N\mkern-4mu$. Generically, $K$ intersects $L_1$ and $L_2$ in two points each, so $W(S)$ has four general base points.
\end{proof}

\begin{remark}
    \label{rmk:no-analogue}
    There is no analogue to \cref{lem:base-points-imply-rank2-line} for conic sections. Let $W$ be a web of quadrics with four general base points. Generically, the discriminant of $W$ does \emph{not} contain a conic section of \points{2}.
    
    Indeed, consider the $\PP^5$ of quadrics through four general points $p_1, p_2, p_3, p_4 \in \PP^3$ and let $D$ be its discriminant. We shall describe the \locus{2} of $D$. Let $H \subset \PP^3$ be the plane spanned by three of the points, $p_i$, $p_j$ and $p_k$, and let $H_l \subset \PP^3$ be a plane containing the remaining point $p_l$. The union of $H$ and $H_l$ is a \quadric{2} passing through the four base points. The set $X_l$ of all such unions forms a plane in $\PP^5\mkern-4mu$. Hence there are four planes, $X_1$, $X_2$, $X_3$ and $X_4$, in the \locus{2} of $D$. In addition, the \locus{2} of $D$ contains the three quadratic surfaces $X_{12}$, $X_{13}$ and $X_{14}$, as described in the proof of \cref{lem:base-points-imply-rank2-line}. In total, the \locus{2} of $D$ is a surface of degree $10$. By B{\'e}zout's theorem, a generic linear $3$-space $W \subset \PP^5$ contains $10$ \points{2}. Hence, $W$ must be in a special position in order to contain a conic section of \points{2}.
\end{remark}

\noindent
We can still deduce the number of additional \points{2} for a general quartic symmetroid with a double conic:

\begin{proposition}
    \label{prop:general-conic}
    Let $S \subset \PP^3$ be a general quartic symmetroid that is singular along a smooth conic section. Then $S$ has four additional nodes.
\end{proposition}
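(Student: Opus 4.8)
The plan is to realise $S$ inside the $\PP^5$ of quadrics through the four base points of $W(S)$ and then to intersect $W(S)$ with the \locus{2} of the discriminant of that $\PP^5$, one component at a time.

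First I would carry over the construction behind the preceding proposition and its proof. By \cref{thm:double-conics-have-rank2} the conic $C$ lies in the \locus{2} of $S$, so $N \coloneqq \langle C \rangle \subset W(S)$ is a net whose base locus consists of two lines $L_1$ and $L_2$; writing $W(S) = \langle N, K \rangle$ for a quadric $K$ containing neither $L_1$ nor $L_2$, the four general base points of $W(S)$ are $p_1, p_2 \in L_1$ and $p_3, p_4 \in L_2$. Passing to the $\PP^5$ of quadrics through $p_1, \ldots, p_4$ with discriminant $D$, \cref{rmk:no-analogue} tells us that the \locus{2} of $D$ is the degree-$10$ surface
\[
    X_2^D = X_1 \cup X_2 \cup X_3 \cup X_4 \cup X_{12} \cup X_{13} \cup X_{14},
\]
with each $X_l$ a plane and each $X_{ij}$ a quadric surface, as described there. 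Since the \locus{2} is cut out by the vanishing of the $3 \times 3$ minors of the matrix, the \locus{2} of $S$ equals $W(S) \cap X_2^D$, and it is enough to intersect $W(S)$ with each of these seven components.

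The conic $C$ sits in the component $X_{12}$ attached to the splitting $\{p_1, p_2\} \sqcup \{p_3, p_4\}$: here $\overline{p_1 p_2} = L_1$ and $\overline{p_3 p_4} = L_2$, so $X_{12}$ is the surface of \quadrics{2} through $L_1 \cup L_2$, it spans the $\PP^3$ of all quadrics through $L_1 \cup L_2$, inside which $N$ is a plane and $C = N \cap X_{12}$. Because $K$ does not lie in $\langle X_{12} \rangle$, we get $W(S) \cap \langle X_{12} \rangle = N$ and hence $W(S) \cap X_{12} = C$: this component contributes precisely the double conic and no isolated point. For each plane $X_l$ the intersection $X_l \cap X_{12}$ is a line of one ruling of $X_{12}$, lying in $\langle X_{12} \rangle$, so $W(S) \cap X_l$ is the single point where that ruling meets $N$ \dash a point of $C$; thus the four planes contribute no new singularity either. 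Finally, set $M_1 \coloneqq \overline{p_1 p_3}$ and $M_2 \coloneqq \overline{p_2 p_4}$, so that $X_{13}$ consists of the \quadrics{2} through $M_1 \cup M_2$ and spans the $\PP^3$ of quadrics through $M_1 \cup M_2$. The net $N$ meets that $\PP^3$ in the single quadric $q_0$ cut out on $N$ by the pencil of quadrics through $L_1 \cup L_2 \cup M_1 \cup M_2$, and for general $S$ this $q_0$ has rank at least $3$; since two $3$-planes in $\PP^5$ meet in at least a line, $W(S) \cap \langle X_{13} \rangle$ is a line through $q_0$, which cuts the quadric surface $X_{13}$ in two points. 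Symmetrically $X_{14}$ contributes two more. Hence the \locus{2} of $S$ consists of $C$ together with the four \points{2} produced by $X_{13}$ and $X_{14}$, and these are nodes of $S$.

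The main obstacle is this component-by-component bookkeeping together with the genericity statements it relies on: one must check that the contributions of $X_1, \ldots, X_4$ really fall on $C$ (so that they are absorbed by the double conic and not miscounted as nodes), that $q_0$ has rank at least $3$ for general $S$ (so that the lines $W(S) \cap \langle X_{1j} \rangle$ meet $X_{1j}$ away from $C$), and that for general $S$ the four resulting points are distinct, reduced \points{2} and $S$ has no further singularities.
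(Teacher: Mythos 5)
Your proposal is correct and follows essentially the same route as the paper: decompose the \locus{2} of the $\PP^5$ of quadrics through the four base points into the planes $X_l$ and the quadric surfaces $X_{ij}$, observe that $W(S)\cap X_{12}=C$ and that each $X_l$ meets $W(S)$ only in a point of $C$ (via the ruling line $X_l\cap X_{12}$), and collect two points each from $X_{13}$ and $X_{14}$. Your additional verification that the point $q_0=N\cap\langle X_{13}\rangle$ has rank at least $3$, so that the four resulting \points{2} genuinely lie off $C$, is a welcome detail that the paper subsumes under genericity.
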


\begin{proof}
    We continue with the notation from \cref{rmk:no-analogue}.
    The union of $H$ and a plane containing the line spanned by $p_l$ and $p_k$, is a quadric that lie in $X_l \cap X_{ij}$. The intersection $X_l \cap X_{ij}$ is the line $L_l$ of all such quadrics. Suppose that $W(S) \subset \PP^5$ intersects $X_{12}$ in a conic section $C\mkern-1mu$. Generically, $W(S)$ intersects the quadratic surfaces $X_{13}$ and $X_{14}$ in two points each, and the planes $X_l$ in a point each. Since the lines $L_l$ meet $C$ in a point, $W(S)$ does not intersect $X_l$ outside of $C\mkern-1mu$. Hence, $W(S)$ has generically four isolated \points{2}.
\end{proof}

\noindent
We can count the number of symmetroids with a double conic:

\begin{proposition}
    \label{prop:dimension-conic}
    The family of quartic symmetroids with a double smooth conic section is $17$-dimensional.
\end{proposition}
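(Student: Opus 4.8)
The plan is to follow the template of \cref{prop:dimension-rank2-line,thm:dimension-double-line}: realise the family of double-conic symmetroids as the image of an explicit incidence variety and compute the dimension of that variety. By \cref{thm:double-conics-have-rank2}, \cref{prop:general-conic}, and the earlier propositions on symmetroids with a double conic, a general quartic symmetroid $S$ with a double conic is recovered from two pieces of data: four points $p_1, \dots, p_4 \in \PP^3$ in general position \dash the base locus of $W(S)$ \dash and the web $W(S)$ itself, regarded as a linear $3$-space inside the $\PP^5$ of quadrics through $p_1, \dots, p_4$, subject to the condition that $W(S)$ meets one of the three quadratic surfaces $X_{12}, X_{13}, X_{14} \subset \PP^5$ from \cref{rmk:no-analogue} \dash say $X_{12}$ \dash in a conic (the conic along which $S$ is double necessarily lies in the \locus{2} of $W(S)$, hence in one of the components of that locus, and it cannot lie in one of the planar components $X_l$, so it lies in some $X_{ij}$). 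So the first step is to set up this parametrising incidence variety.

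The geometric heart of the argument is to identify the linear span of $X_{12}$. Write $\ell_{12}$ and $\ell_{34}$ for the lines through $p_1, p_2$ and through $p_3, p_4$; these are skew, and a point of $X_{12}$ is the quadric $H \cup H'$ for a plane $H \supset \ell_{12}$ and a plane $H' \supset \ell_{34}$, i.e.\ the vanishing of a product of a linear form in the pencil cutting out $\ell_{12}$ and one in the pencil cutting out $\ell_{34}$. Since these two pencils of linear forms together span the full space of linear forms on $\PP^3$, the resulting map $\PP^1 \times \PP^1 \to \PP^5$ is a Segre embedding, so $\Pi \coloneqq \langle X_{12}\rangle$ is a linear $3$-space and $X_{12} \subset \Pi$ is a smooth quadric surface. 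Hence $W \cong \PP^3$ cuts out a conic on $X_{12}$ exactly when $W$ meets the $3$-plane $\Pi$ in a $2$-plane; in that case $W \cap X_{12}$ is a smooth conic lying in the \locus{2} of the discriminant, so the discriminant \dash being an irreducible quartic singular along a smooth conic \dash is double along it.

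The count then proceeds in three steps. First, the four general points $p_1, \dots, p_4$ contribute $(\PP^3)^4$, i.e.\ $12$ dimensions; note that there is no coplanarity restriction here, unlike in the double-line situation. Second, with the $p_i$ and hence $\Pi$ fixed, formula~\eqref{eq:dim-fixed-intersection} with $k = 3$, $n = 5$, $m = 3$, $l = 2$ gives that the linear $3$-spaces $W \in \GG(3, 5)$ with $\dim(W \cap \Pi) \geqslant 2$ form a family of dimension $(l + 1)(m - l) + (k - l)(n - k) = 3 + 2 = 5$, the generic such $W$ satisfying $\dim(W \cap \Pi) = 2$ and $W \neq \Pi$. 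Third, the choice among $X_{12}, X_{13}, X_{14}$ is discrete and irrelevant to the dimension. Combined with the converse direction supplied by the earlier propositions, and with the fact that $S$ determines this data up to finite ambiguity, the family has dimension $12 + 5 = 17$.

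The main obstacle I anticipate is establishing that $\Pi = \langle X_{12}\rangle$ is a $3$-plane rather than a $\PP^5$: the whole count turns on this, because it is what converts ``$W$ contains a conic of \points{2}'' into a Grassmannian incidence condition to which \eqref{eq:dim-fixed-intersection} applies directly. A secondary point requiring some care is verifying that a general member of the constructed family is an \emph{irreducible} quartic \dash so that ``singular along the conic'' upgrades to ``double along the conic'', and so that the parametrisation is generically finite onto its image \dash which is where the realisation of irreducible double-conic quartics as projections of quartic del Pezzo surfaces enters in the background.
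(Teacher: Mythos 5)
Your proof is correct and follows essentially the same route as the paper: fix four general base points ($12$ dimensions), observe that $X_{12}$ spans a $\PP^3$ so that cutting out a conic on it is the Schubert condition $\dim(W \cap \langle X_{12} \rangle) \geqslant 2$, and apply \eqref{eq:dim-fixed-intersection} to get $5$ more dimensions, for a total of $17$. Your Segre-embedding argument that $\langle X_{12} \rangle$ is a $3$-plane is a detail the paper asserts without proof, but otherwise the two arguments coincide.
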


\begin{proof}
    As in the proof of \cref{prop:general-conic}, a quartic symmetroid with a double smooth conic section corresponds to a linear $3$-space $W \subset \PP^5$ that intersects $X_{12}$ in a conic section. This is the same as saying that $W$ intersects the $\PP^3$ spanned by $X_{12}$, in a plane. It follows from \eqref{eq:dim-fixed-intersection} that the family of $3$-spaces that intersect $X_{12}$ in a conic is $5$-dimensional.

     The calculation above shows the number of linear systems, with a conic section of \points{2}, having fixed base points $p_1$, $p_2$, $p_3$ and $p_4$. A choice of base points corresponds to a point in $\PP^3 \times \PP^3 \times \PP^3 \times \PP^3$, which is $12$-dimensional. In total, the family of quartic symmetroids with a double conic has dimension $5 + 12 = 17$.
\end{proof}

\begin{figure}[htbp]
    \centering
    \includegraphics[height = 0.28\textheight]{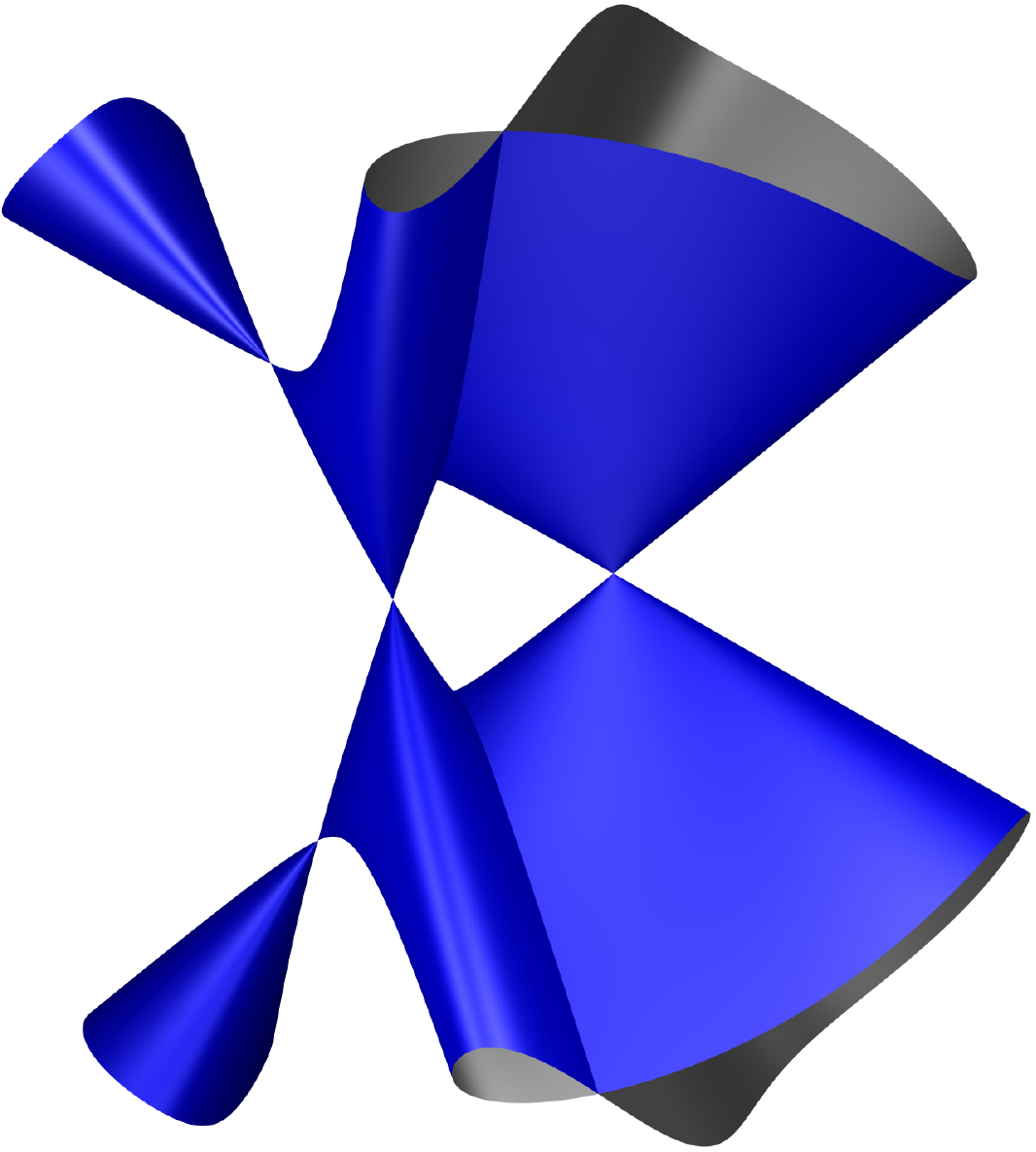}
    \caption{A general quartic symmetroid with a double smooth conic section. The surface has four real nodes.}
\end{figure}

\section{Quartic Symmetroids with a Double Twisted Cubic Curve}

\noindent
All quartic symmetroids with a double twisted cubic curve are reducible.

\begin{proposition}
    \label{prop:no-twisted-cubic}
    Let $S \subset \PP^3$ be an irreducible quartic surface with a double twisted cubic curve $T\mkern-4mu$. Then $S$ is not a symmetroid.
\end{proposition}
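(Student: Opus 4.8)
The plan is to argue by contradiction. Suppose $S=\V(\det A)$ is an irreducible quartic symmetroid singular along a twisted cubic $T$. I would use throughout that $T$ is non-degenerate in $\PP^3$ and that $T\subseteq S$ forces $\rank A(x)\le 3$ at every $[x]\in T$; write $r$ for the generic corank of $A$ along $T$, so $r\in\{1,2,3\}$. For $r=1$ (generic rank $3$): the $Q(x)$, $[x]\in T$, are cones with a single vertex $p(x)$ depending algebraically on $[x]$, giving a morphism $T\to\PP^3$. Since $[x]\in\Sing(S)$ and $Q(x)$ has corank $1$, \cref{lem:singularity-is-a-base-point} places $p(x)$ in the base locus $B$ of $W(S)$. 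If $[x]\mapsto p(x)$ is non-constant its image is a curve in $B$, so \cref{lem:curve-in-base-locus} makes $S$ reducible---a contradiction. If $p(x)\equiv p$ is constant, then $A(x)p=0$ on $T$; as this is a linear condition on $[x]$ it holds on the span of $T$, hence everywhere, so $A_ip=0$ for all $i$ and $\det A\equiv0$, which is impossible for a quartic surface. The case $r=3$ (generic rank $1$) is cleared quickly: $q|_T=\sum_i\gamma_i(s,t)q_i(y)$ would be a rank-$\le1$ quadratic form in $y$ for every parameter value $(s,t)$, and a short degree argument (using that a nonzero binary cubic is not a perfect square) forces all $A_i$ proportional to a single $vv^\top$, again giving $\det A\equiv0$.

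The substance is the case $r=2$ (generic rank $2$), where $Q(x)=\Pi_1(x)\cup\Pi_2(x)$ is a pair of planes and $[x]\mapsto\{\Pi_1(x),\Pi_2(x)\}$ defines a morphism from a dense open of $T$ into $\operatorname{Sym}^2\big((\PP^3)^\vee\big)$. First I would clear the degenerate behaviours of this morphism. If it is constant, writing $q(x,y)=\lambda(x)\,\ell_{\Pi_1}(y)\ell_{\Pi_2}(y)$ with $\lambda$ linear and comparing coefficients of a basis of cubics along $T$ gives $q_i=\lambda_i\cdot(\text{a fixed rank-}2\text{ form})$, so $A_i=\lambda_iA^\ast$ with $\rank A^\ast\le2$ and $\det A\equiv0$. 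If the planes move but all pass through a common point $p$, then every $Q(x)$, $[x]\in T$, is singular at $p$, and the propagation argument of the case $r=1$ again gives $\det A\equiv0$. If all the planes contain a common line $\ell$, then $\ell\subseteq Q(x)$ for $[x]\in T$---once more a linear condition on $[x]$---so $\ell\subseteq B$, and \cref{lem:square-discriminant} forces $\det A$ to be a square, contradicting irreducibility.

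The one remaining sub-case, and the real obstacle, is that the planes $\Pi_i(x)$ sweep out a non-degenerate curve $\Gamma\subset(\PP^3)^\vee$. To handle it I would first pin down $\Gamma$: the composite $T\dashrightarrow\operatorname{Sym}^2\big((\PP^3)^\vee\big)\hookrightarrow\PP^9$ agrees, under the linear embedding $W(S)\colon\PP^3\hookrightarrow\PP^9$, with the degree-$3$ embedding of $T$; pulling this back through the map $([u],[w])\mapsto[uw^\top+wu^\top]$ forces the curve $\widetilde\Gamma\subset(\PP^3)^\vee\times(\PP^3)^\vee$ of plane-pairs to be symmetric of bidegree $(3,3)$, so $\Gamma$ is itself a twisted cubic. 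Then, parametrising $\Gamma$ in suitable dual coordinates and writing $q(x,y)=\lambda\,(u\cdot y)(w\cdot y)$ with $[u],[w]$ moving on $\Gamma$, I would impose that this be the restriction to $T$ of the globally $x$-linear form $\sum_ix_iq_i(y)$; this is an explicit closed condition which a finite computation should reduce to one of the degenerate shapes already excluded. A route worth running in parallel: every quartic double along $T$ equals $\V(\mathbf{g}^\top C\,\mathbf{g})$, where $\mathbf{g}=(g_1,g_2,g_3)$ generates the ideal of $T$ and $C$ is a symmetric $3\times3$ matrix---necessarily of rank $3$ when $S$ is irreducible---so it suffices to show that such a quartic is never a $4\times4$ symmetric determinant of linear forms. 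I expect this last sub-case to be where essentially all the work lies; everything before it is routine given the lemmas already established.
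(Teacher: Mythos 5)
Your corank-$1$ case is sound and matches the paper's in substance (your finish \dash the vertex condition $A(x)p=0$ is linear in $x$ and $T$ spans $\PP^3$, so $\det A\equiv 0$ \dash is if anything cleaner than the paper's, which instead produces a line in $\Sing(S)$ and concludes that $S$ is singular along a quartic curve). The genuine gap is in the generic-rank-$2$ case, which you yourself identify as ``where essentially all the work lies'': the final sub-case is deferred to an unexecuted ``finite computation'', so the proposal does not prove the proposition. Moreover, even the setup of that computation is shaky. The map $([u],[w])\mapsto[uw^\top+wu^\top]$ is generically two-to-one onto the \locus{2} of the $\PP^9$ of quadrics, so the preimage of a degree-$3$ curve has total $(1,1)$-degree $6$, not $3$; it may be an irreducible symmetric $(3,3)$-curve, but it may equally split into two components of bidegrees $(a,b)$ and $(b,a)$ with $a+b=3$, and in neither situation does ``$\Gamma$ is a twisted cubic'' follow (a $(3,3)$-curve can project to a line, a conic or a plane cubic). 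So the residual case is both unproved and not correctly pinned down.

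For comparison, the paper closes the rank-$2$ case without any computation, by switching from the dual picture (the planes $\Pi_i(x)$) to the geometry of $T$ itself: the secant variety of $T$ is all of $\PP^3\mkern-4mu$, so through any point of $S\setminus T$ there is a line meeting the double curve $T$ in a length-$2$ scheme, and B{\'e}zout forces that line into $S$; hence $S$ is a scroll. A general ruling line $l$ is then a pencil of quadrics of generic rank $3$ containing the two \points{2} where $l$ meets $T\mkern-4mu$, so \cref{lem:rank3-pencil} forces the \locus{2} of $l$ to have length $3$; but the \locus{2} lies in $\Sing(S)$, which cuts only a length-$2$ scheme on $l$. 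You already have all the needed lemmas in hand (\cref{lem:rank3-pencil}, \cref{lem:singularity-is-a-base-point}, \cref{lem:curve-in-base-locus}); the missing idea is to exploit that the secant lines of $T$ sweep out $\PP^3$ rather than to classify the family of plane pairs. As written, your argument establishes the easy cases only.
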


\begin{proof}
    Assume for contradiction that $S$ is a symmetroid. Then there are two cases: The points on $T$ are either generically \points{3} or they are all \points{2}.
    
    Suppose that there are only finitely many \points{2} on $T\mkern-4mu$. Let $p_1, p_2 \in T$ be two \nodes{3} and assume that the associated quadrics of $p_1$ and $p_2$ have a common singular point $p$. \cref{lem:singularity-is-a-base-point} implies that $p$ is a base point for $W(S)$. All the associated quadrics on the line $L$ spanned by $p_1$ and $p_2$, are singular at $p$. By \cref{lem:singular-at-base-point}, $L$ is contained in $\Sing(S)$. But then $S$ is singular along a quartic curve, so $S$ is reducible, which is a contradiction. We conclude that the associated quadrics of $p_1$ and $p_2$ have different apexes. It follows from \cref{lem:singularity-is-a-base-point} that $T$ gives rise to a curve of base points for $W(S)$, but this is impossible by \cref{lem:curve-in-base-locus}.    
    
    Assume that $T$ consists of \points{2}. The secant variety of $T$ equals $\PP^3\mkern-4mu$. Hence every point in $\PP^3$ lies on either a secant line or a tangent line of $T\mkern-4mu$. Let $p$ be a point in $S \setminus T\mkern-4mu$. There exists a line $L$ through $p$ that meets $T$ in a scheme of length~$2$. Since $T$ is double on $S$, it follows from B{\'e}zout's theorem that $L$ is contained in~$S$. Thus $S$ is a scroll. The Jacobian ideal of $S$ defines $T$, and possibly some points. Along a general line $l$ in $S$, the Jacobian defines a scheme of length~$2$. However, $l$ corresponds to a pencil of quadrics with rank $2$ and $3$. \cref{lem:rank3-pencil} implies that $l$ contains a scheme of length $3$ of \points{2}. This contradicts the fact that the \locus{2} is contained in the singular locus. 
\end{proof}

\section{Quartic Symmetroids with a Triple Point}

\noindent
Let $S \subset \PP^3$ be a quartic surface with a triple point $p$. Note that the projection $\pi_p \colon S \setminus \{p\} \dashrightarrow \PP^2$ from $p$ is a birational map, so $S$ is rational. 

If $p \coloneqq [1 : 0 : 0 : 0]$, then the equation of $S$ can be written as $x_0F_3 + F_4$, where $F_3, F_4 \in \C[x_1, x_2, x_3]$ are polynomials of degree $3$ and $4$, respectively. The cubic cone $C \coloneqq \V(F_3)$ intersects $S$ in twelve lines, which meet at $p$. Let $p_1, \ldots, p_{12} \in \PP^2$ be the images of the these lines under $\pi_p$. Then $S$ can be represented as the image of the map induced by the linear system of quartic curves through the $p_i$. Let $e_i$ be the linear equivalence class of the exceptional line over the point $p_i$.

If $S$ is a symmetroid, then the matrix defining $S$ can be written as
\begin{align*}
    \setlength{\arraycolsep}{6pt}
    \begin{bmatrix}
        x_0 + l_{00} & l_{01} & l_{02} & l_{03} \\
              l_{01} & l_{11} & l_{12} & l_{13} \\
              l_{02} & l_{12} & l_{22} & l_{23} \\
              l_{03} & l_{13} & l_{23} & l_{33} \\
    \end{bmatrix}
    \mkern-7mu,
\end{align*}
where $l_{ij} \in \C[x_1, x_2, x_3]_1$ are linear forms. Moreover, $F_3$ is equal to the determinant of the submatrix
\begin{align*}
    \begin{bmatrix}
        l_{11} & l_{12} & l_{13} \\
        l_{12} & l_{22} & l_{23} \\
        l_{13} & l_{23} & l_{33}
    \end{bmatrix}
    \mkern-7mu.
\end{align*}
This implies that $C$ is tangent to $S$ along the sextic curve given by the zero locus of the $(3 \times 3)$-minors of the submatrix
\begin{align*}
    \begin{bmatrix}
        l_{01} & l_{11} & l_{12} & l_{13} \\
        l_{02} & l_{12} & l_{22} & l_{23} \\
        l_{03} & l_{13} & l_{23} & l_{33}
    \end{bmatrix}
    \mkern-7mu.
\end{align*}
Hence, the twelve lines on $S$ through $p$ coincide in such a way that they occur with even multiplicity. The general case is that two and two lines coincide. Then six of the linear equivalence classes $e_i - e_j$ are effective. This induces six nodes on $S$.

\begin{proposition}[{\cite[Article~93]{Jes16}}]
    \label{prop:general-triple-point}
    Let $S \subset \PP^3$ be a general quartic symmetroid with a triple point. Then $S$ has six additional nodes.
\end{proposition}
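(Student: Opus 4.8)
The plan is to read the singularities off the matrix representation displayed above. Write that matrix in block form as $A = \left(\begin{smallmatrix} x_0 + l_{00} & v^{T} \\ v & M\end{smallmatrix}\right)$, with $v \coloneqq (l_{01}, l_{02}, l_{03})^{T}$ and $M$ the symmetric $(3 \times 3)$-submatrix $(l_{ij})_{1 \leqslant i, j \leqslant 3}$. Expanding the determinant gives
\[
    \det A = (x_0 + l_{00})\det M - v^{T}\operatorname{adj}(M)\,v,
\]
so $F_3 = \det M$ and $S \cap C = \V\big(F_3,\, v^{T}\operatorname{adj}(M)\,v\big)$. At a point where $M$ has corank $1$ the adjugate $\operatorname{adj}(M)$ has rank $1$, hence $v^{T}\operatorname{adj}(M)v$ agrees, up to a unit on $C$, with the square of a linear form vanishing exactly when $v \in \operatorname{im} M$; a local computation then shows $v^{T}\operatorname{adj}(M)v$ vanishes to order two along the sextic $\Gamma$ cut out by the $(3 \times 3)$-minors of $[\,v \mid M\,]$, which is the tangency $S \cap C = 2\Gamma$ asserted before the statement. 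Since $v$ and $M$ have entries in $\C[x_1, x_2, x_3]_1$, the curve $\Gamma$ is the cone with vertex $p$ over a length-$6$ scheme in $\PP^{2}$ (a Thom--Porteous count), and therefore for general $S$ the twelve lines of $S \cap C$ through $p$ are six distinct lines, each with multiplicity two.

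Next I would transfer this to the birational model $S = \varphi\big(\widetilde{\PP^{2}}\big)$, where $\widetilde{\PP^{2}}$ is $\PP^{2}$ blown up at the twelve points $p_1, \dots, p_{12}$ and $\varphi$ is the morphism induced by $|4H - \sum_{i} e_{i}|$, the $p_i$ being the twelve lines through $p$ recorded in the $\PP^{2}$ of lines through $p$. As these lines coincide two-and-two, for general $S$ the twelve points organise into six pairs, each consisting of a point and an infinitely near point; equivalently six of the classes $e_i - e_j$ become effective, each carried by an irreducible $(-2)$-curve, say $\gamma_1, \dots, \gamma_6$ (the strict transforms of the corresponding first exceptional lines). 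One checks on the Picard lattice that $\gamma_k^{2} = -2$ and $\gamma_k \cdot \big(4H - \sum_i e_i\big) = 0$, so $\varphi$ contracts each $\gamma_k$ to an $A_1$-point \dash a node \dash of $S$; for general $S$ the $\gamma_k$ are pairwise disjoint, producing six distinct nodes, none of which is the triple point $p$ (the image of the contracted $(-3)$-curve $3H - \sum_i e_i$, the strict transform of the plane cubic underlying $C$).

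It then remains to show the general such $S$ has no further singularity. Away from the $(-3)$-curve over $p$ and the six curves $\gamma_k$, the morphism $\varphi$ restricts to a closed embedding once the twelve points are otherwise in general position, and being in that position is a generic condition in the family of quartic symmetroids with a triple point. Hence the singular locus of a general $S$ is precisely the triple point together with six nodes, as claimed.

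I expect the real work to lie in the two genericity assertions: that for a general quartic symmetroid with a triple point the twelve lines through $p$ coincide in the mildest possible way (exactly two-and-two, so six genuine infinitely near pairs, rather than a triple coincidence fusing three exceptional lines), and that the resulting six $(-2)$-curves are disjoint and are the only curves $\varphi$ contracts besides the $(-3)$-curve. Both failures are closed conditions, so the clean route is to exhibit one explicit quartic symmetroid with a triple point and exactly six nodes \dash Pl{\"u}cker's surface serves \dash whence semicontinuity carries the count to the general member.
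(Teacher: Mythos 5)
Your argument is essentially the one the paper gives in the discussion immediately preceding the statement (the paper itself only sketches this and defers to Jessop, Article~93): the cubic cone $C = \V(F_3)$ meets $S$ in twelve lines through $p$ which, because $S \cap C$ is twice the sextic cut out by the $(3 \times 3)$-minors of the $(3 \times 4)$-submatrix, coincide in pairs; the six resulting effective classes $e_i - e_j$ are $(-2)$-curves orthogonal to the polarisation and are contracted to nodes. Your block-determinant expansion, the Thom--Porteous count of the length-$6$ scheme, and the lattice computation on the blown-up plane are correct and supply detail the paper omits.

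One concrete error in your closing paragraph: Pl{\"u}cker's surface is not a witness. In this paper (\cref{ex:plucker}), as in the classical literature, it is the quartic with a \emph{double line} and eight nodes, not a triple-point quartic, so it cannot anchor your semicontinuity argument. The explicit example you want is \cref{ex:triple-point} (or any triple-point symmetroid verified to have exactly six nodes). With that substitution \dash and granting that the family of triple-point symmetroids is irreducible, which follows from the parametrisation underlying \cref{prop:dimension-triple}, so that ``general'' is meaningful and the locus where the twelve lines degenerate further or the six $(-2)$-curves fail to be disjoint is a proper closed subset \dash your openness argument does close the genericity gap you correctly identified as the real content of the statement.
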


\noindent
Jessop proves that if $S$ is a quartic surface with a triple point and six nodes that do not lie on a conic section, then $S$ is a symmetroid. This fact makes it straightforward to deduce the size of the family of symmetroids with a triple point:

\begin{proposition}[{\cite[Article~93]{Jes16}}]
    \label{prop:dimension-triple}
    The family of quartic symmetroids with a triple point is $21$-dimensional. 
\end{proposition}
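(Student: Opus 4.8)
The plan is to combine a direct parameter count with the converse to Jessop's theorem already quoted in the text. By \cref{prop:general-triple-point} the general quartic symmetroid with a triple point carries six additional nodes, while conversely a quartic surface with a triple point and six nodes not lying on a conic section is a symmetroid \cite[Article~93]{Jes16}. Hence the family of quartic symmetroids with a triple point has the same dimension as the family of quartic surfaces with a triple point and six general nodes, and it suffices to compute the latter.

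First I would fix the triple point $p \coloneqq [1 : 0 : 0 : 0]$. As recalled before the statement, a quartic with a triple point at $p$ has equation $x_0 F_3 + F_4$ with $F_3 \in \C[x_1, x_2, x_3]_3$ and $F_4 \in \C[x_1, x_2, x_3]_4$; since these spaces have dimensions $10$ and $15$, such surfaces form a $\PP^{24}$. Letting $p$ vary over $\PP^3$ adds three parameters, and a general quartic with a triple point has that point as its only singularity, so the quartics with a triple point form a $27$-dimensional family. Next I would impose the six nodes. For a point $q \neq p$, requiring $x_0 F_3 + F_4$ to be singular at $q$ is a system of four linear conditions on $(F_3, F_4)$: the vanishing at $q$ of $\partial_{x_0}(x_0 F_3 + F_4) = F_3$ and of the three partials with respect to $x_1, x_2, x_3$, the vanishing of the equation itself then being forced by the Euler relation. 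Forming the incidence variety of tuples $(S, q_1, \ldots, q_6)$ over $(\PP^3)^6$, with projection to the space of surfaces generically finite because a general such symmetroid has exactly these six extra nodes, one sees that prescribing six general nodes cuts the dimension down by six. This gives $27 - 6 = 21$.

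The one delicate point is the independence of the six node conditions \dash equivalently, that the linear system of quartics with a triple point at $p$ passing doubly through six general points has the expected dimension~$0$, so that the incidence variety fibres as claimed \dash together with the remark that the ``not on a conic'' hypothesis holds for a general configuration and is thus immaterial for the dimension count. Both facts are guaranteed by Jessop's analysis in \cite[Article~93]{Jes16}, so this is the only place where an external input is used; the rest is bookkeeping.
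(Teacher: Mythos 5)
Your argument is correct and is exactly the ``straightforward deduction'' the paper alludes to: it defers the statement to Jessop's converse and the count of quartics with a triple point and six general nodes, which you carry out as $24 + 3 - 6 = 21$. No issues.
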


\begin{figure}[hbtp]
    \centering
    \includegraphics[height = 0.28\textheight]{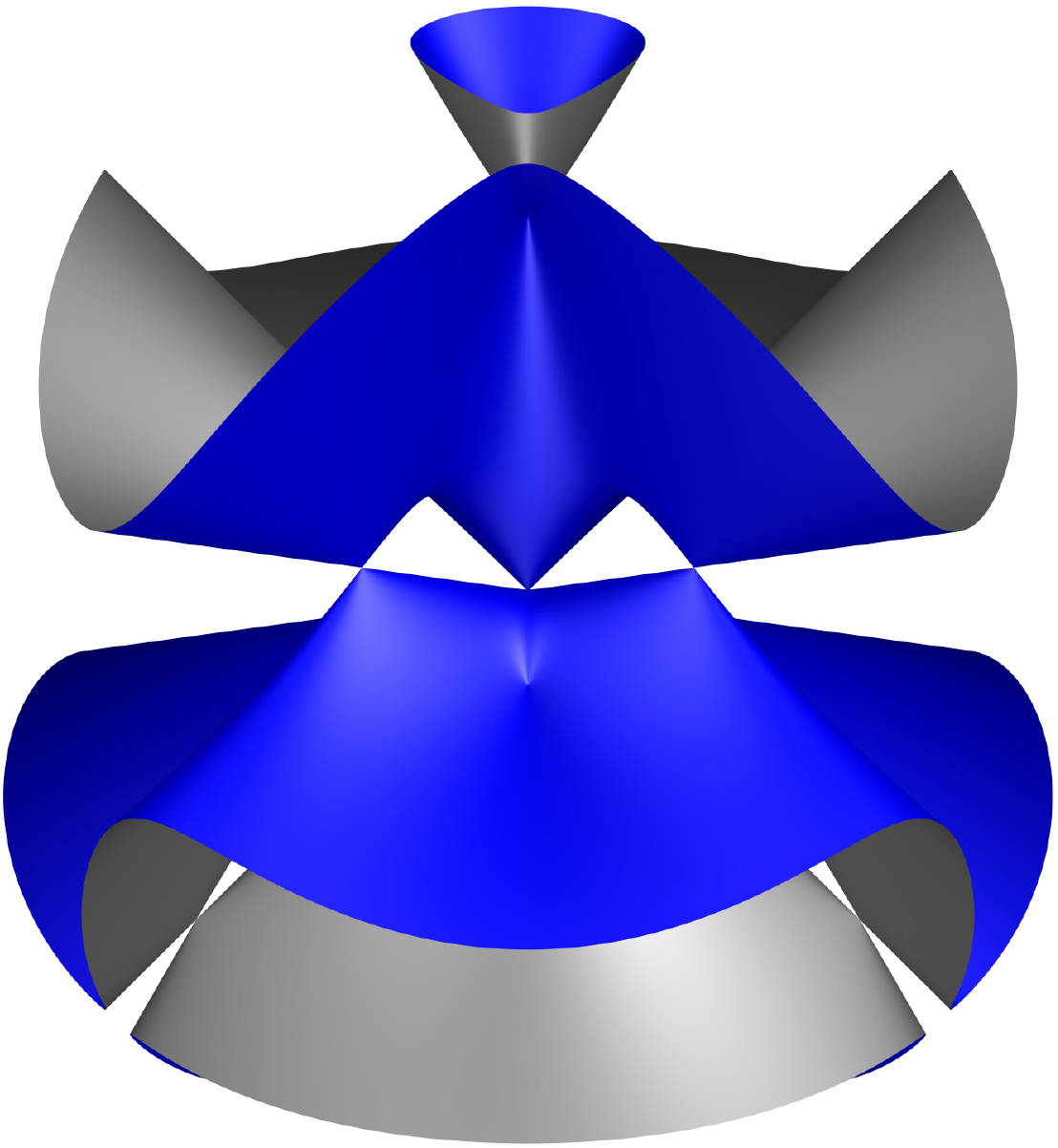}
    \caption{A general quartic symmetroid with a triple point, which is the central singularity. The surface has six real nodes.}
\end{figure}

\section{Quartic Symmetroids with an Elliptic Double Point}

\noindent We say that a double point is \emph{elliptic} if there is a curve of arithmetic genus~$1$ with support on the exceptional curve of a minimal resolution of the singularity. In \cite{Noe89}, Noether describes three classes, $S^{(1)}_4\mkern-4mu$, $S^{(2)}_4$ and $S^{(3)}_4\mkern-4mu$, of rational quartic surfaces having an elliptic double point. He expresses them by linear systems of plane curves and gives explicit equations for the surfaces. He proves that together with the quartic surfaces having a double curve or a triple point, these are the only rational quartic surfaces.

We show that of these types, only $S^{(1)}_4$ can occur as a symmetroid. The singularity of type $S^{(1)}_4$ is called a \emph{tacnode}. Moreover, we prove that a general tacnodal symmetroid has six additional nodes.

The rational parametrisation of these surfaces is given by linear systems of plane curves passing through some base points $p_i$. Let $e_i$ be the linear equivalence class of the exceptional line over the point $p_i$, and let $l$ be the class of the pullback of a line in $\PP^2\mkern-4mu$. The linear systems can then be expressed as
\begin{flalign*}
    & S^{(1)}_4\colon 
    & \mathclap{6l - 2\sum_{i = 1}^{7} e_i - \sum_{i = 8}^{11} e_i,} &&
    \\[0.7ex]
    & S^{(2)}_4\colon 
    & \mathclap{7l - 3e_1 - 2\sum_{i = 2}^{10} e_i,} &&
    \\
    & S^{(3)}_4\colon
    & \mathclap{9l - 3\sum_{i = 1}^{8} e_i - 2e_9 - e_{10}.} &&
\end{flalign*}
In all three cases, the base points $p_i$ lie on a cubic curve.

Choosing coordinates such that the elliptic double point is $p \coloneqq [1 : 0 : 0 : 0]$, we may assume that polynomials defining the different types of surfaces, are on the forms
\begin{flalign}
    \label{eq:type1}
    & S^{(1)}_4\colon 
    &\mathclap{x_1^2x_0^2 + (x_1F_2)x_0 + F_4,} &&
    \\
    \label{eq:type2}
    & S^{(2)}_4\colon 
    &\mathclap{x_1^2x_0^2 + (x_1x_3(2x_3 + B_1) + A_3)x_0 
    + x_3^4 + B_1x_3^3 + B_2x_3^2 + B_3x_3 + B_4,} &&
    \\
    \label{eq:type3}
    & S^{(3)}_4\colon
    &\mathclap{x_1^2x_0^2 + 2(x_1x_3A_1 + A_3)x_0 - x_1x_3^3 + B_2x_3^2 
    + B_3x_3 + B_4,} &&
\end{flalign}
where $F_d$ is a form of degree~$d$ in $\C[x_1, x_2, x_3]$ and $A_d$, $B_d$ are forms of degree~$d$ in $\C[x_1, x_2]$.

If $S$ is a quartic surface with a double point at $p$, then we may write its equation as 
\begin{align}
    \label{eq:quartic-double-point}
    F_2x_0^2 + F_3x_0 + F_4,
\end{align}
where $F_d$ is a form of degree $d$ in $\C[x_1, x_2, x_3]$. If $S$ is a symmetroid, the quadric $\V(F_2)$ is called the \emph{tangent cone} of $S$ at $p$. In the cases $S^{(1)}_4\mkern-4mu$, $S^{(2)}_4$ and $S^{(3)}_4\mkern-4mu$, the tangent cone is a double plane. Using this fact, we show that elliptic double points have rank $2$:

\begin{lemma}
    \label{lem:elliptic-rank2}
    Let $S$ be a general rational quartic symmetroid with an elliptic double point $p$. Then $p$ is a \point{2}.
\end{lemma}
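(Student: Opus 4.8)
The plan is to pit the rank of $S$ at $p$ against the tangent cone of $S$ at~$p$. Choose coordinates with $p = [1:0:0:0]$ and write $S = \V(\det A)$ for a symmetric $A(x) = A_0x_0 + A_1x_1 + A_2x_2 + A_3x_3$. Since $p \in S$ we have $\corank A_0 \geqslant 1$, so the task is to exclude $\rank A_0 \in \{0, 1, 3\}$ (the value $4$ being impossible, as then $p \notin S$). Grouping $\det A(x) = F_0x_0^4 + F_1x_0^3 + F_2x_0^2 + F_3x_0 + F_4$ with $F_j \in \C[x_1, x_2, x_3]_j$, the fact that $p$ is a double point of $S$ gives $F_0 = F_1 = 0$, putting the equation of $S$ in the form~\eqref{eq:quartic-double-point} with tangent cone $\V(F_2)$. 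The key input from the classification recalled above is that for each of $S^{(1)}_4$, $S^{(2)}_4$ and $S^{(3)}_4$ the tangent cone is a double plane; in particular $F_2$ is a \emph{nonzero} quadratic form of rank~$1$.

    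The cases $\rank A_0 \leqslant 1$ are immediate: conjugating $A(x)$ so that $A_0$ is diagonal leaves at most one entry of $A(x)$ depending on $x_0$, so $\det A(x)$ has degree $\leqslant 1$ in $x_0$ and $F_2 = 0$ --- contradicting the double plane. The real work is to rule out $\rank A_0 = 3$. Here I would first conjugate so that $A_0 = \operatorname{diag}(a_0, a_1, a_2, 0)$ with all $a_i \neq 0$, writing $\beta_{ij}$ for the entries of $A_1x_1 + A_2x_2 + A_3x_3$. A short expansion shows $F_1 = a_0a_1a_2\,\beta_{44}$, so $F_1 = 0$ forces $\beta_{44} = 0$; that is, the singular point of $Q(p)$ lies in the base locus of $W(S)$, as \cref{lem:singularity-is-a-base-point} predicts. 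Carrying the expansion one term further gives
    \[
        F_2 = -\bigl( a_0a_1\,\beta_{34}^2 + a_0a_2\,\beta_{24}^2 + a_1a_2\,\beta_{14}^2 \bigr).
    \]
    Since the $a_i$ are nonzero, the rank of this quadratic form equals $\dim_{\C} \langle \beta_{14}, \beta_{24}, \beta_{34} \rangle$; hence, for $F_2$ to be a double plane, these three linear forms must all be proportional to a single linear form~$\ell$ (and not identically zero, lest $\det A \equiv 0$).

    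To conclude, I would conjugate $A(x)$ once more by a matrix of the shape $P \oplus 1$, $P \in \mathrm{GL}_3(\C)$, so that the last column of $A(x)$ becomes $(0, 0, \ell, 0)^{T}$; one checks that this preserves the kernel of $A_0$ and keeps the $(4,4)$-entry zero, so $A(x)$ takes the bordered shape $\bigl[\begin{smallmatrix} M(x) & v \\ v^{T} & 0 \end{smallmatrix}\bigr]$ with $M$ a symmetric $3 \times 3$ matrix of linear forms and $v = (0, 0, \ell)^{T}$. The bordered-determinant identity then gives $\det A(x) = -\ell^{2}\bigl(M_{11}M_{22} - M_{12}^{2}\bigr)$, so $\V(\ell)$ is a component of $S$ and $S$ is reducible --- contradicting the irreducibility of the rational quartic surface $S$. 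Therefore $\rank A_0 = 2$, i.e.\ $p$ is a \point{2}.

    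I expect the $\rank A_0 = 3$ case to be the main obstacle: computing $F_2$ explicitly, recognising that a double-plane tangent cone forces the three relevant linear forms to be collinear, and then reading the splitting of $\det A$ off the bordered matrix --- all while checking that the successive conjugations leave $A_0$ and the $(4,4)$-entry in the required shape. The other two cases, and deriving the contradiction from reducibility, are comparatively routine; genericity enters only to ensure that $S$ is irreducible.
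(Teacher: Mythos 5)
Your setup---reducing to $\corank A_0 = 1$, diagonalising $A_0$, and computing $F_1 = a_0a_1a_2\beta_{44}$ and $F_2 = -\big(a_0a_1\beta_{34}^2 + a_0a_2\beta_{24}^2 + a_1a_2\beta_{14}^2\big)$---matches the paper, and your dismissal of the ranks $0$ and $1$ is fine. But the pivotal step fails: over $\C$ the rank of a sum $\sum c_i\ell_i^2$ with all $c_i \neq 0$ is \emph{not} equal to $\dim_\C\langle \ell_1,\ell_2,\ell_3\rangle$; it is only bounded above by it. Take $a_0=a_1=a_2=1$ and $\beta_{14}=x_1$, $\beta_{24}=x_2$, $\beta_{34}=ix_2$: then $F_2 = -\big(x_1^2+x_2^2+(ix_2)^2\big) = -x_1^2$ is a double plane although the three forms span a $2$-dimensional space. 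So the double-plane hypothesis forces only $\dim_\C\langle\beta_{14},\beta_{24},\beta_{34}\rangle\leqslant 2$ (rank $3$ would indeed require linear independence), not proportionality, and your final conjugation to a bordered matrix with last column $(0,0,\ell,0)^{T}$---and hence the factor $\ell^2$ of $\det A$ and the reducibility of $S$---is not available.

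The paper closes exactly this gap by a different route. From the fact that $F_2$ is a square it deduces only, via unique factorisation applied to $(l_{03}-il_{13})(l_{03}+il_{13})=(l-l_{23})(l+l_{23})$, that (after rescaling) $l_{23}\in\langle l_{03},l_{13}\rangle$, i.e.\ the span is at most $2$-dimensional. It then observes that, because the $(4,4)$ entry of the matrix vanishes, every monomial of $\det M$ is divisible by some product $l_{i3}l_{j3}$, so $\det M\in(l_{03},l_{13})^2$ and $S$ is singular along the line $\V(l_{03},l_{13})$---a contradiction with the \emph{generality} of $S$ (read as excluding symmetroids singular along a curve), not with irreducibility. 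Your argument can be repaired in the same spirit: conjugate by $P\oplus 1$ so that the last column becomes $(\ell_1,\ell_2,0,0)^{T}$ and read off $\det A = -w^{T}\operatorname{adj}(M)\,w\in(\ell_1,\ell_2)^2$. But as written, the proportionality claim, and the reducibility conclusion drawn from it, do not follow.
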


\begin{proof}
    Note that the generality of $S$ means, in particular, that $S$ is not singular along a curve or has a triple point.

    Assume for contradiction that $p$ is a \singularity{3} and choose coordinates such that $p \coloneqq [1 : 0 : 0 : 0]$. Then we may assume, as in the proof of \cref{lem:singularity-is-a-base-point}, that the matrix defining $S$ is
    \begin{align*}
        M
        =
        \begin{bmatrix}
            a_0x_0 + l_{00} &     l_{01}      &     l_{02}      & l_{03} \\
                l_{01}      & a_1x_0 + l_{11} &     l_{12}      & l_{13} \\
                l_{02}      &     l_{12}      & a_2x_0 + l_{22} & l_{23} \\
                l_{03}      &     l_{13}      &     l_{23}      &    0
        \end{bmatrix}
        \mkern-7mu,
    \end{align*}
    where $l_{ij} \in \C[x_1, x_2, x_3]_1$ and $a_0, a_1, a_2 \in \C\setminus\{0\}$. Then
    \begin{align*}
        \det(M)
        =
        - 
        \big(
            a_1a_2l_{03}^2 + a_0a_2l_{13}^2 + a_0a_1l_{23}^2
        \big)
        x_0^2
        +
        F_3x_0
        +
        F_4,
    \end{align*}
    where $F_d$ is a form of degree $d$ in $\C[x_1, x_2, x_3]$. Since $p$ is an elliptic double point, then $-\big(a_1a_2l_{03}^2 + a_0a_2l_{13}^2 + a_0a_1l_{23}^2 \big)$ is a square. By scaling the $l_{i3}$, we have that
    \begin{align*}
        l_{03}^2 + l_{13}^2 + l_{23}^2 = l^2
    \end{align*}
    for some linear form $l$. Equivalently,
    \begin{align*}
        (l_{03} - il_{13})(l_{03} + il_{13}) = (l - l_{23})(l + l_{23}).
    \end{align*}
    Since $\C[x_1, x_2, x_3]$ is a unique factorisation domain, it follows that $l_{23}$ is a linear combination of $l_{03}$ and $l_{13}$.

    Note that every term in $\det(M)$ has an $l_{i3}l_{j3}$ factor. Because $l_{23}$ vanishes whenever both $l_{03}$ and $l_{13}$ vanish, $S$ is singular along the line $\V(l_{03}, l_{13})$. This contradicts the generality of $S$.
\end{proof}

\subsection{Type \texorpdfstring{$\boldsymbol{S^{(1)}_4}\mkern-4mu$}{S(1)4}}

Tacnodal surfaces are distinguished from types $S^{2}_4$ and $S^{3}_4$ by the intersection of the reduced tangent cone at the elliptic double point, with the surface.

\begin{lemma}
    \label{lem:two-double-lines}
    Let $S \subset \PP^3$ be a general, irreducible quartic symmetroid with a tacnode~$p$. Then the reduced tangent cone of $S$ at $p$, intersects $S$ in two double lines.
\end{lemma}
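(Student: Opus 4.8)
The plan is to put the tacnode and its reduced tangent cone into normal form, and then to recover the intersection of $S$ with the reduced tangent cone directly from the symmetric matrix defining $S$.

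First I would choose coordinates so that the tacnode is $p \coloneqq [1 : 0 : 0 : 0]$ and, since $S$ has type $S^{(1)}_4$ and hence a double plane as its tangent cone at $p$, so that this plane is $\V(x_1)$. Then the equation of $S$ has the form~\eqref{eq:type1}, namely $x_1^2 x_0^2 + (x_1 F_2) x_0 + F_4$ with $F_d \in \C[x_1, x_2, x_3]_d$. Restricting to the reduced tangent cone $\V(x_1)$ annihilates the first two terms, so $S \cap \V(x_1)$ is cut out inside the plane $\V(x_1) \cong \PP^2$ by the binary quartic $F_4(0, x_2, x_3)$, which does not involve $x_0$; as $S$ is irreducible this quartic is not identically zero, so $S \cap \V(x_1)$ is a union of four lines through $p$. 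It therefore suffices to prove that $F_4(0, x_2, x_3)$ is the square of a binary quadratic form, and that for general $S$ that quadratic has two distinct roots.

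To control $F_4(0, x_2, x_3)$ I would use that $p$ is a \point{2}, by \cref{lem:elliptic-rank2}. As in the proofs of \cref{lem:singularity-is-a-base-point,lem:elliptic-rank2}, conjugate the matrix defining $S$ into the shape $M = x_0 D + L$, where $D \coloneqq \operatorname{diag}(a_0, a_1, 0, 0)$ with $a_0, a_1 \in \C \setminus \{0\}$ and $L$ is a symmetric matrix of linear forms in $\C[x_1, x_2, x_3]$. The coefficient of $x_0^2$ in $\det M$ is $a_0 a_1$ times the determinant of the lower right $2 \times 2$ block of $L$; comparing with~\eqref{eq:type1}, this determinant is a nonzero scalar multiple of $x_1^2$, so the lower right $2 \times 2$ block of $L$ becomes singular modulo $x_1$. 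A unique-factorisation argument, as in \cref{lem:rank2-pencil} and in the proof of \cref{lem:elliptic-rank2}, shows that a symmetric $2 \times 2$ matrix of linear forms in $x_2, x_3$ with vanishing determinant is, after a constant change of basis in the last two coordinates, of the form $\operatorname{diag}(\ast, 0)$. Such a change of basis fixes $D$, and for general $S$ the entry $\ast$ vanishes too, so that the whole lower right $2 \times 2$ block of $L$ is divisible by $x_1$.

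Granting this, $M|_{x_1 = 0} = x_0 D + L|_{x_1 = 0}$ has a vanishing $2 \times 2$ corner block, hence the block shape $\left[\begin{smallmatrix} A & B \\ B^{T} & 0 \end{smallmatrix}\right]$ with $A, B$ of size $2 \times 2$ and with $B$ having entries in $\C[x_2, x_3]$. The standard identity $\det\left[\begin{smallmatrix} \ast & B \\ B^{T} & 0 \end{smallmatrix}\right] = \det(B)^2$ for $2 \times 2$ blocks then gives $F_4(0, x_2, x_3) = \det(B)^2$ up to a nonzero scalar, the square of a binary quadratic form; and since for general $S$ that quadratic does not further degenerate to the square of a linear form, $S \cap \V(x_1)$ consists of two distinct double lines. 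The main obstacle is the reduction in the previous paragraph: checking that the corner block of $L$ truly becomes singular, and for general $S$ zero, modulo $x_1$, and excluding the degenerate case where $L$ has rank $1$ in that corner modulo $x_1$ — which, together with the type-$S^{(1)}_4$ feature that the coefficient of $x_0$ in~\eqref{eq:type1} is divisible by $x_1$, makes $F_4(0, x_2, x_3)$ equal to a linear form squared times a quadratic, i.e.\ a double line plus two simple lines rather than two double lines. This is precisely where the type-$S^{(1)}_4$ hypothesis and the generality of $S$ enter; the determinantal identity and the count of lines are then routine.
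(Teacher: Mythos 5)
Your route is genuinely different from the paper's: you work directly with the normal form of the matrix at the rank-\(2\) point \(p\), restrict to the plane \(\V(x_1)\), and aim to exhibit \(F_4(0,x_2,x_3)\) as \(\det(B)^2\) via the block identity for a symmetric matrix with a vanishing \(2\times 2\) corner. The paper instead reads \(\V(x_1)\) as a net of quadrics whose discriminant is the four concurrent lines, sorts the corresponding four pencils into the two types of \cref{lem:rank3-pencil} using \cref{lem:singularity-is-a-base-point}, and eliminates by hand every configuration other than two coincident pairs, the degenerate cases being killed by \cref{lem:square-discriminant} and irreducibility. The routine parts of your argument are correct: the \(x_0^2\)-coefficient is \(a_0a_1\) times the determinant of the corner block, the UFD argument does reduce that block modulo \(x_1\) to \(\operatorname{diag}(\ast,0)\) after a constant congruence in the last two coordinates, and the Laplace expansion giving \(\det(B)^2\) is valid.

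The gap is exactly where you flag it, and it is not peripheral: the assertion that for general \(S\) the entry \(\ast\) vanishes, so that the corner block of \(L\) is divisible by \(x_1\), \emph{is} the lemma once the block identity is available, since the alternative \(\operatorname{diag}(\ell,0)\) with \(\ell\neq 0\) produces (as you note) a double line plus two simple lines. Appealing to generality is not free here: you must show either that a tacnodal symmetroid with \(\ell\neq 0\) cannot exist, or that such symmetroids form a locus of dimension strictly smaller than the family of tacnodal symmetroids \dash and the dimension count of \cref{prop:dimension-tacnode} is downstream of this lemma, so one cannot quote it without circularity. The paper itself records the phenomenon you need only as an unproved observation in \cref{ex:tacnode} (other realisations of \eqref{eq:tangent-cone} as a square ``in general'' do not give a tacnode), which is precisely why its proof of the lemma takes the pencil-by-pencil route. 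To close your argument along your own lines, compute the \(x_0\)-coefficient \(F_3\) in the normal form with corner \(\operatorname{diag}(\ell,0)\): modulo \(x_1\) it equals \(\ell\bigl(a_0l_{13}^2+a_1l_{03}^2\bigr)\), so the tacnode condition \(x_1\mid F_3\) from \eqref{eq:type1} forces, when \(\ell\neq 0\), a linear relation between \(l_{03}\) and \(l_{13}\) modulo \(x_1\); one must then check that this degenerates \(S\) (for instance by making it singular along a line through \(p\), contradicting the generality hypothesis) rather than yielding an honest tacnodal symmetroid. Without that verification the proof is incomplete at its decisive step.
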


\begin{proof}
    In \eqref{eq:type1}, the reduced tangent cone at $p$ is the plane $H \coloneqq \V(x_1)$. It is clear from \eqref{eq:type1} that $H$ intersects $S$ in a cone, that is, four concurrent lines, $L_1$, $L_2$, $L_3$ and $L_4$.
    When $S$ is a symmetroid, we interpret $H$ as a net $N$ of quadrics, where the discriminant~$D$ consists of four pencils $P_i$, corresponding to $L_i$.
    Let $Q$ be the quadric satisfying $[Q] = p$.

    By \cref{lem:rank3-pencil}, each $P_i$ is of one of two types. We say that $P_i$ is of type I if the quadrics in $P_i$ have a common singularity, and that $P_i$ is of type II if the quadrics have a common tangent plane.

    First, we show that at most one of the pencils are of type I. Suppose that $P_1$ and $P_2$ are of type I. If all the \quadrics{2} in $P_1$ coincide, then the base locus of $P_1$ contains a triple line. It follows that the line $\Sing(Q)$ is contained in the base locus of $P_1$. If the \quadrics{2} in both $P_1$ and $P_2$ coincide, then $\Sing(Q)$ is in the base locus of $N\mkern-2mu$. \cref{lem:square-discriminant} implies that $D$ is a square. On the other hand, if $P_1$ contains at least one \quadric{2} $Q'$ different from $Q$, then $[Q']$ is a singular point on $S$. Any line $L$ in $H$ through $[Q']$ meets $L_2$, $L_3$ and $L_4$ in a point each. It follows that $L$ is contained in $S$ and thus that $H$ is a component in $S$. This is impossible. Hence $P_1$ must coincide with one of the other pencils.

    Next, we show that at most two of the pencils are of type II. If $P_1$ is of type II, then one of the planes in $Q$ is the common tangent plane for the quadrics in $P_1$. It follows that if three or more of the $P_i$ are of type II, then there are two pencils, $P_1$ and $P_2$, having the same common tangent plane $H'\mkern-4mu$. Choose coordinates such that $H'$ is $\V(y_3)$ and such that $H'$ is tangent to the quadrics in $P_i$ along the line $\V(y_i, y_3)$, for $i = 1, 2$. Then the quadrics in $P_1$ contain no $y_0^2$, $y_0y_1$, $y_0y_2$, $y_1y_2$ or $y_2^2$ terms. Likewise, the quadrics in $P_2$ contain no $y_0^2$, $y_0y_1$, $y_0y_2$, $y_1y_2$ or $y_1^2$ terms. Hence the quadrics in $N$ have no $y_0^2$, $y_0y_1$, $y_0y_2$ or $y_1y_2$ terms. Thus $N$ is contained in the space of quadrics parametrised by the matrix
    \begin{align*}
        A
        \coloneqq
        \begin{bmatrix}
              0    &   0    &   0    & x_{03} \\
              0    & x_{11} &   0    & x_{13} \\
              0    &   0    & x_{22} & x_{23} \\
            x_{03} & x_{13} & x_{23} & x_{33}
        \end{bmatrix}
        \mkern-7mu.
    \end{align*}
    The determinant is $\det(A) = -x_{03}^2 x_{11} x_{22}$. Hence the discriminant is not reduced. Moreover, note that the pencils of quadrics defined by $\V(x_{03})$, $\V(x_{11})$ and $\V(x_{22})$ each have a common singular point at $[1 : 0 : 0 : 0]$, $[x_{13} : -x_{03} : 0 : 0]$ and $[x_{23} : 0 : -x_{03} : 0]$, respectively. Thus they are all of type I, which is a contradiction.

    The discriminant $D$ consists of four lines, but the above paragraphs show that we can have at most three distinct pencils. In order to make sense of this, we consider pencils appearing with higher multiplicity. Consider the case where both $P_1$ and $P_2$ are of type II, whereas $P_3$ and $P_4$ coincide and are of type I. Let $H_1$ and $H_2$ be the common tangent planes of $P_1$ and $P_2$, respectively. The base locus of $P_1$ consists of a double line $l_1 \subset H_1$ and a conic section $C_1 \subset H_2$. Similarly, the base locus of $P_2$ consists of a double line $l_2 \subset H_2$ and a conic section $C_2 \subset H_1$. Let $p' \in H_1 \cap H_2$ be the common singularity of $P_3$. Since $P_3$ appears twice in $D$, \cref{lem:singularity-is-a-base-point} implies that $p'$ is a base point for $N\mkern-2mu$. Let $p'' \in l_1 \cap C_2$ be an intersection point different from $p'\mkern-3mu$. Then $p''$ is a base point for $N\mkern-2mu$. The line $L$ spanned by $p'$ and $p''$ lies in $H_1$, so it is tangent to all quadrics in $P_1$. Since the quadrics in $P_1$ pass through both $p'$ and $p''\mkern-3mu$, they contain $L$. Moreover, $L$ is in the base locus of $P_3$. Thus $L$ is in the base locus of $N\mkern-2mu$. \cref{lem:square-discriminant} states that $D$ is a square.

    Finally, if a pencil of type II appears with higher multiplicity in $D$, then \cref{lem:singularity-is-a-base-point} implies that the base locus of $N$ contains the line of singular points. By \cref{lem:square-discriminant}, $D$ is a square.
\end{proof}

\begin{lemma}
    \label{lem:tacnode-basepoints}
    Let $S \subset \PP^3$ be a general, irreducible quartic symmetroid with a tacnode~$p$. Then $W(S)$ has two base points.
\end{lemma}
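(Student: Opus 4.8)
The plan is to push the computation onto the two planes that make up the rank-$2$ quadric over $p$, and then to use \cref{lem:two-double-lines} to show that the base locus of the associated net is a line plus finitely many residual points, which the remaining freedom in the web cuts down to exactly two points.

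By \cref{lem:elliptic-rank2}, $p$ is a \point{2}, so the quadric $Q$ with $[Q] = p$ has rank exactly $2$ and is therefore a union $Q = \Pi_1 \cup \Pi_2$ of two distinct planes; put $\lambda \coloneqq \Pi_1 \cap \Pi_2 = \Sing(Q)$. The base locus $B$ of $W(S)$ lies on every member of $W(S)$, so $B \subseteq Q = \Pi_1 \cup \Pi_2$, and $B$ is finite because $S$ is irreducible (\cref{lem:curve-in-base-locus}). I would first note that $Q$ is the only member of $W(S)$ containing $\Pi_1$, and likewise for $\Pi_2$: otherwise $W(S)$ would contain a pencil of quadrics each containing $\Pi_1$, hence a pencil of \quadrics{2}, and the \locus{2} of $S$ would contain the corresponding line, contradicting the generality of $S$ (which excludes singular curves, as in the proof of \cref{lem:elliptic-rank2}). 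Consequently, restriction of quadrics to $\Pi_i$ gives a genuine net of conics on $\Pi_i \cong \PP^2$ --- the linear projection of $W(S) \cong \PP^3$ away from the point $p$ --- whose base locus is precisely $B \cap \Pi_i$.

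Now I bring in \cref{lem:two-double-lines}. The reduced tangent cone $H$ of $S$ at $p$ carries a net $N \subseteq W(S)$ of quadrics, and the two double lines of $H \cap S$ correspond to pencils $P_1, P_2 \subseteq N$ through $[Q] = p$, with $N = \langle P_1, P_2 \rangle$. Each $P_j$ consists of singular quadrics of general rank $3$ (rank $2$ is excluded, as it would make $S$ singular along a line) and contains the \quadric{2} $Q$, so by \cref{lem:rank3-pencil} together with the type analysis from the proof of \cref{lem:two-double-lines}, the line $\lambda = \Sing(Q)$ is forced into the base locus of each $P_j$, hence into that of $N$. Thus the base locus of $N$ is $\lambda$ together with a finite residual set $R$. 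Writing $W(S) = \langle N, K \rangle$ for a quadric $K \notin N$, the base locus of $W(S)$ is the intersection of the base locus of $N$ with $K$, namely $(\lambda \cap K) \cup (R \cap K)$. The two points of $\lambda \cap K$ lie in $B$, since every member of $W(S)$ is a combination of a member of $N$ --- which contains $\lambda$ --- and $K$; moreover the members of $W(S)$ outside $N$ all cut the same length-two divisor on $\lambda$ (the map $W(S) \dashrightarrow |\mathcal{O}_\lambda(2)|$, $[Q'] \mapsto Q' \cap \lambda$, has $N$ for its indeterminacy locus), and this divisor is $B \cap \lambda$. It then remains to check that $B$ gains no further point off $\lambda$ and that $B \cap \lambda$ is reduced; for general $S$ both hold.

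The step I expect to be the real obstacle is the claim that \cref{lem:two-double-lines} forces the whole line $\lambda$ into the base locus of $N$: a generic net of quadrics in $\PP^3$ has eight isolated base points and no base line, so one must genuinely exploit the non-generic features supplied by the tacnode --- the multiplicity-two occurrence of the two lines in $H \cap S$ and the resulting constraints on the mutual position of $P_1$ and $P_2$ --- and this presumably means running through the handful of sub-cases left by the type-I/type-II dichotomy of \cref{lem:rank3-pencil}. A short dimension count, or one explicit tacnodal symmetroid as in \cref{sec:examples}, then rules out $|B| \neq 2$.
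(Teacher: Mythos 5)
Your overall strategy matches the paper's: pass to the net $N$ cut out by the reduced tangent cone, use \cref{lem:two-double-lines} to get two double lines in its discriminant, show that $N$ has a base \emph{line}, and then intersect with a fourth quadric $K$ to obtain two base points for $W(S)$. The endgame (the base locus of $W(S)$ is the base locus of $N$ cut by $K$, and the two points of $\lambda\cap K$ survive) is fine. But the step you yourself flag as ``the real obstacle'' is genuinely missing, and the mechanism you sketch for it is not the one that works. First, you never exclude the possibility that one of the pencils $P_i$ is of type II in the sense of \cref{lem:rank3-pencil}, i.e.\ that the singular points of its \quadrics{3} sweep out a line of base points. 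The paper rules this out by a separate argument: if $L_1$ gave a line of base points, a \quadric{3} $Q'$ corresponding to a point of $L_2\setminus L_1$ would be a cone containing every line from its vertex to that base line, hence would contain a plane, contradicting $\rank Q' = 3$. Second, even once both pencils are of type I with common vertices $q_1$ and $q_2$, it is \emph{not} true that $\lambda=\Sing(Q)$ lies in the base locus of each $P_j$ ``by the type analysis'': the base locus of a single type-I pencil is four concurrent lines through $q_j$ (the intersection $Q\cap Q_j$ with a rank-$3$ cone $Q_j$ that is singular at $q_j\in\lambda$ but need not contain $\lambda$), and $\lambda$ is not a priori among them.

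What actually forces $\lambda$ into the base locus of $N$ is the multiplicity: each $L_i$ is a \emph{double} line of the discriminant of $N$, so the discriminant is singular along $L_i$, and \cref{lem:singularity-is-a-base-point} makes the common vertex $q_i$ a base point of the whole net $N$, not just of $P_i$. Both $q_1$ and $q_2$ lie on $\Sing(Q)=\lambda$ since $Q$ belongs to both pencils, so $\lambda=\langle q_1,q_2\rangle$; then every cone in $P_1$ contains the line through its vertex $q_1$ and the base point $q_2$, likewise for $P_2$, and since $N=\langle P_1,P_2\rangle$ and containing a line is a linear condition, every member of $N$ contains $\lambda$. Without this chain \textemdash{} exclusion of type II, promotion of $q_1,q_2$ to base points of $N$ via the doubled lines, and the identification $\lambda=\langle q_1,q_2\rangle$ \textemdash{} your argument does not close; a dimension count or a single example, as you suggest at the end, would not substitute for it.
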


\begin{proof}
    By \cref{lem:two-double-lines}, the reduced tangent cone at $p$ intersects $S$ in two double lines. This corresponds to a net~$N$ of quadrics, where the discriminant~$D$ consists of two double lines, $L_1$ and $L_2$. Because $S$ is generic, the general quadric in each of these pencils has rank $3$. \cref{lem:rank3-pencil} states that the quadrics along $L_i$ have either a common singular point or the singular points form a line. By \cref{lem:singularity-is-a-base-point}, $L_i$ gives rise to a single base point or a line of base points for $N\mkern-2mu$, respectively.

    Suppose that $L_1$ gives rise to a line of base points. Let $Q$ be a \quadric{3} that corresponds to a point $[Q] \in L_2$ not contained in $L_1$. Let $q$ be the singular point of $Q$, and let $q'$ be one of the base points coming from $L_1$. Then the line spanned by $q$ and $q'$ is contained in $Q$. It follows that the plane spanned by $q$ and the line of base points, is contained in $Q$. This contradicts the assumption that $Q$ has rank~$3$. We conclude that the quadrics along each line $L_i$ have a common singularity.

    Let $q_i$ be the common singularity of the quadrics along $L_i$. Then the line $L$ spanned by $q_1$ and $q_2$, is contained in the base locus of $N\mkern-2mu$. Extend $N$ with a quadric~$Q' \notin N\mkern-2mu$, such that $N$ and $Q'$ span the web $W(S)$. Then $Q'$ intersects $L$ in two points. Hence $W(S)$ has two base points.
\end{proof}

\begin{proposition}
    \label{prop:general-tacnode}
    Let $S \subset \PP^3$ be a general quartic symmetroid with a tacnode~$p$. Then $S$ has six additional nodes.
\end{proposition}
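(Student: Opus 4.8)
The plan is to exploit the explicit description of the associated web $W(S)$ obtained in \cref{lem:tacnode-basepoints}: $W(S)$ is a linear $3$-space in the $\PP^8$ of quadrics in $\PP^3$ through two fixed points $q_1$ and $q_2$, and it meets the linear subspace of quadrics singular at $q_1$ (respectively $q_2$) in a line. Concretely, I would choose coordinates so that $q_1 = [1:0:0:0]$ and $q_2 = [0:1:0:0]$, parametrise the quadrics through both points by the symmetric matrix with zero $(1,1)$- and $(2,2)$-entries, and identify the locus $X_{q_i}$ of quadrics singular at $q_i$ as an explicit linear subspace. The \locus{2} $X_2$ of the discriminant $D$ in this $\PP^8$ is a subvariety whose components and degree can be read off: as in \cref{prop:general-rank3-line} one expects a piece contained in $\Sing(X_2)$ coming from the two base points, and the strategy is to compute, via the explicit matrix, the tangent space to $X_2$ along the lines $L_1, L_2$ that $W(S)$ cuts out in $X_{q_1}$ and $X_{q_2}$.

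The key steps, in order, are: (1) write down the explicit $4\times 4$ symmetric matrix $A(x)$ parametrising quadrics through $q_1$ and $q_2$, so that $D = \V(\det A)$ sits in $\PP^8$; (2) determine the scheme $X_2$ (the \locus{2} of $D$) and its degree, together with the loci where its tangent space jumps — in particular verify that $X_2$ is singular exactly along its intersection with $X_{q_1}\cup X_{q_2}$, mirroring \cref{prop:general-rank3-line}; (3) recall from \cref{lem:two-double-lines} and its proof that $W(S)$ meets $X_2$ along the two double lines $L_1 \subset X_{q_1}$ and $L_2 \subset X_{q_2}$ (each $L_i$ carrying a scheme of length $3$ of \points{2} by case (1) of \cref{lem:rank3-pencil}, since the pencils along the $L_i$ are of type I); (4) use Bézout: $W(S)\cap X_2$ has total length equal to $\deg X_2$, and the contribution concentrated on $L_1$ and $L_2$ — two schemes of length $3$ — must be subtracted; what remains is a finite scheme of length $6$ away from the tacnode, which for general $S$ consists of $6$ reduced points, i.e.\ six nodes; (5) check, using \cref{lem:singularity-is-a-base-point} and the genericity of $S$, that these six residual \points{2} are ordinary nodes and distinct from the tacnode and from each other, and finally invoke \cref{thm:main}'s running hypothesis (or an explicit example) to confirm the count is attained, so that ``at most six'' is in fact ``exactly six.''

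The main obstacle I expect is step (2) together with the bookkeeping in step (4): one must pin down the degree of the \locus{2} $X_2$ inside this particular $\PP^8$ (it need not equal the degree $10$ of the generic-$\PP^9$ \locus{2}, since fixing two base points changes the geometry) and, more delicately, account correctly for the multiplicity with which $W(S)$ meets $X_2$ along each $L_i$. Because the $L_i$ lie in $\Sing(X_2)$, the local intersection multiplicity of the $3$-space $W(S)$ with $X_2$ along $L_i$ exceeds the naive count, and one has to argue — from the explicit matrix, or by a tangent-space dimension computation as in \cref{prop:general-rank3-line} — that for general $S$ this excess is exactly what is needed to leave a residual length-$6$ scheme. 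Once the intersection-theoretic ledger balances, genericity (generic smoothness of the residual intersection, and the fact that \points{2} are generically nodes) finishes the proof; exhibiting one symmetroid with a tacnode and six visible nodes, as promised in \cref{sec:examples}, confirms the family is non-empty and the bound sharp.
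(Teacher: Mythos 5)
Your overall strategy (pass to the linear system of quadrics through the two base points from \cref{lem:tacnode-basepoints}, describe its \locus{2}, and intersect with $W(S)$) is the right one, but the proposal has a genuine gap exactly where you anticipate trouble, plus a conceptual error in the ledger. First, the ambient space is a $\PP^7\mkern-4mu$, not a $\PP^8$: each base point imposes one linear condition on the $\PP^9$ of quadrics. More importantly, the \quadrics{2} through two base points fall into two components \dash unions $H_{12} \cup H$ with one plane through both base points (a fourfold of degree $4$), and unions $H_1 \cup H_2$ with one base point on each plane (a fourfold of degree $6$) \dash and the count of residual nodes is obtained as $2 + 4 = 6$ by treating these separately; your write-up never makes this decomposition. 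Second, the subtraction in your step (4) is misplaced: the two double lines of \cref{lem:two-double-lines} are not contained in $\Sing(S)$, so the \points{2} they carry other than the tacnode itself are isolated singular points of $S$ and belong to the six additional nodes \dash they are not an excess to be removed. Even on its own terms the arithmetic does not close, since $10 - 3 - 3 = 4$, not $6$.

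The step you flag as the ``main obstacle'' is in fact the heart of the matter and is not settled by a tangent-space computation as in \cref{prop:general-rank3-line}. Both components of the \locus{2} are non-normal along the plane of quadrics of the form $H_{12} \cup H'_{12}$ with both planes through the two base points, and the tacnode lies on that plane; the local intersection multiplicity of $W(S)$ with the degree-$4$ component there is $3$, not $2$, so naive B{\'e}zout bookkeeping gives the wrong residual length. The paper resolves this by identifying the two components as (projections of) the Segre varieties $\Sigma_{3,1} \subset \PP^7$ and $\Sigma_{2,2} \subset \PP^8\mkern-4mu$, lifting the relevant section of $W(S)$ to the Segre variety, applying B{\'e}zout there with degrees $4$ and $6$, and counting how many intersection points lie in the fibre over the tacnode ($2$ in each case), which leaves $2 + 4 = 6$ points elsewhere. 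Without this desingularisation, or an equivalent excess-intersection computation, the proof does not go through.
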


\begin{proof}
    Following \cref{lem:tacnode-basepoints}, consider the $\PP^7$ of quadrics passing through the base points $p_1$ and $p_2$. We shall describe the \locus{2} of this space. It consists of two components, $X_1$ and $X_2$. First, let $H_{12}$ be a plane containing both $p_1$ and $p_2$, and let $H$ be any plane in $\PP^3\mkern-4mu$. The union $H_{12} \cup H$ is a \quadric{2} passing through the base points. The set $X_1 \subset \PP^7$ of all such unions is a fourfold of degree~$4$. Next, let $H_1$ be a plane containing $p_1$, and $H_2$ a plane containing $p_2$. The union $H_1 \cup H_2$ is a \quadric{2} passing through the base points. The set $X_2 \subset \PP^7$ of all such unions is a fourfold of degree~$6$.

    Let $H_{12}$ and $H'_{12}$ be two planes that both contain $p_1$ and $p_2$. Then both  $X_1$ and $X_2$ are singular at the point $[H_{12} \cup H'_{12}]$. From the proof of \cref{lem:tacnode-basepoints}, it is clear that $p_1$ and $p_2$ are contained in the singular locus of the quadric associated to the tacnode~$p$. Thus this quadric consists of two planes that both contain the base points. Hence, $W(S) \subset \PP^7$ intersects $\Sing(X_1) \cap \Sing(X_2)$.

    Choose coordinates such that $p_1 \coloneqq [1 : 0 : 0 : 0]$ and $p_2 \coloneqq [0 : 1 : 0 : 0]$. A general plane $H$ is given by $a_0x_0 + a_1x_1 + a_2x_2 + a_3x_3 = 0$ and a plane $H_{12}$ through $p_1$ and $p_2$ is given by $b_2x_2 + b_3x_3 = 0$. Consider the Segre embedding $\sigma_{3, 1} \colon \PP^3 \times \PP^1 \to \PP^7$ given by
    \begin{align*}
        ([a_0 : a_1 : a_2 : a_3], [b_2 : b_3])
        \mapsto
        [a_0b_2 : a_1b_2 : a_2b_2 : a_3b_2 : a_0b_3 : a_1b_3 : a_2b_3 : a_3b_3].
    \end{align*}
    Let $\PP^7$ have coordinates $[x_{02} : x_{12} : x_{22} : x_{32} : x_{03} : x_{13} : x_{23} : x_{33}]$. The image $\Sigma_{3,1}$ of $\sigma_{3,1}$ is then given by the $(2 \times 2)$-minors of the matrix
    \begin{align*}
        M
        \coloneqq
        \begin{bmatrix}
            x_{02} & x_{12} & x_{22} & x_{32} \\
            x_{03} & x_{13} & x_{23} & x_{33}
        \end{bmatrix}
        \mkern-7mu.
    \end{align*}
    We can expand $M$ into a $(4 \times 4)$-matrix in the following manner:
    \begin{align*}
        \begin{bmatrix}
            x_{02} & x_{12} & x_{22} & x_{32} \\
            x_{03} & x_{13} & x_{23} & x_{33}
        \end{bmatrix}
        \leadsto
        \begin{bmatrix}
                   &        &        &        \\
                   &        &        &        \\
            x_{02} & x_{12} & x_{22} & x_{32} \\
            x_{03} & x_{13} & x_{23} & x_{33}
        \end{bmatrix}
        \leadsto
        \begin{bmatrix}
              0    &   0    & x_{02} & x_{03} \\
              0    &   0    & x_{12} & x_{13} \\
            x_{02} & x_{12} & x_{22} & x_{32} \\
            x_{03} & x_{13} & x_{23} & x_{33}
        \end{bmatrix}
        \eqqcolon
        A.
    \end{align*}
    Let $A'$ be defined as $A$ with $x_{23} = x_{32}$. Then the $(3 \times 3)$-minors of $A'$ define $X_1$. We deduce that $X_1$ is the Segre variety $\Sigma_{3, 1}$ projected down to $\PP^6\mkern-4mu$.

    We see from $A'$ that the base locus of the $\PP^6$ spanned by $X_1$, contains a line. By \cref{lem:curve-in-base-locus}, we may therefore assume that $W(S)$ is not contained in this $\PP^6\mkern-4mu$. Let $W'$ be the plane defined as the intersection of $W(S)$ and the hyperplane spanned by $X_1$. The web $W(S)$ is a generic $3$-space that is such that $W'$ meets $\Sing(X_1)$ in a point~$p$. The fourfold $X_1$ is singular precisely at points that correspond to the union of two planes that both contain the base points $p_1$ and $p_2$. Set-theoretically, $\Sing(X_1)$ is given by
\begin{align}
    \label{eq:singular-locus}
    x_{02} = x_{03} = x_{12} = x_{13} = 0.
\end{align}
    The only minor of $M$ that survives under the relations \eqref{eq:singular-locus} is
\begin{align*}
    \begin{vmatrix}
        x_{22} & x_{32} \\
        x_{23} & x_{33}
    \end{vmatrix}
    =
    x_{22}x_{33} - x_{23}x_{32}.
\end{align*}
    Let $Q \subset \PP^7$ be the quadric defined by this minor. The intersection of $Q$ and the linear space $V$ given by \eqref{eq:singular-locus}, is mapped two-to-one onto $\Sing(X_1)$ under the projection $\PP^7 \to \PP^6\mkern-4mu$. Let $\widetilde{W}' \subset \PP^7$ be the $3$-space lying over $W'\mkern-4mu$. Since the degree of $\Sigma_{3,1}$ is $4$, B{\'e}zout's theorem implies that $\widetilde{W}'$ meets $\Sigma_{3,1}$ in four points. The intersection $\widetilde{W}' \cap V$ is a line $L$ lying over $p$. The quadric $Q$ intersects $L$ in two points. Therefore, $\widetilde{W}'$ meets $\Sigma_{3, 1}$ in two points outside of $V$. It follows that $W'$ meets $X_1$ in two points outside of $p$. Thus the same is true for $W(S)$.

    An analogous argument for $X_2$, considering the Segre variety $\Sigma_{2, 2} \subset \PP^8\mkern-4mu$, shows that $W(S)$ generically intersects $X_2$ in four points outside of $p$. In total, $W(S)$ contains in general six \points{2} in addition to $p$.
\end{proof}

\begin{remark}
    The reader may wonder why we in the proof of \cref{prop:general-tacnode} argue via Segre varieties, instead of using B{\'e}zout's theorem directly on $X_1$ and $X_2$. The reason is that $X_1$ and $X_2$ are not normal. Using the description of $X_1$ as the vanishing of the $(3 \times 3)$-minors of $A'$, we calculate that $\Sing(X_1)$ is given as
    \begin{align*}
        \V\big( x^2_{02}, x_{02}x_{03}, x_{02}x_{12}, x_{02}x_{13}, x^2_{03}, x_{03}x_{12}, x_{03}x_{13}, x^2_{12}, x_{12}x_{13}, x^2_{13} \big)
    \end{align*}
    in the $\PP^6$ spanned by $X_1$. Thus $\Sing(X_1)$ is the whole first-order infinitesimal neighbourhood of the plane $\V(x_{02}, x_{03}, x_{12}, x_{13})$. This has codimension~$4$, hence its degree is $5$. The intersection $W' \cap \Sing(X_1)$ contains the whole first-order infinitesimal neighbourhood of $p$ in $W'\mkern-4mu$. This has codimension $2$ in $W'\mkern-3mu$. Hence $p$ appears in $W(S) \cap X_1$ with multiplicity $3$, not $2$.
\end{remark}

\noindent
We can now determine the number of symmetroids with a tacnode:

\begin{proposition}
    \label{prop:dimension-tacnode}
    The family of quartic symmetroids in $\PP^3$ with a tacnode is $20$-dimensional.
\end{proposition}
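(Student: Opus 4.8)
The plan is to follow the template used for the other families in the paper: encode a general tacnodal symmetroid by finite-dimensional data via \cref{lem:tacnode-basepoints}, count the parameters with \eqref{eq:dim-fixed-intersection}, and check that the encoding is generically finite onto the family of tacnodal quartic symmetroids. Concretely, I would recall from \cref{lem:tacnode-basepoints} that a general quartic symmetroid $S$ with a tacnode has a web $W(S)$ with exactly two base points $p_1, p_2 \in \PP^3$, so $W(S)$ lies inside the $\PP^7$ of quadrics through $p_1$ and $p_2$. By the proof of \cref{prop:general-tacnode}, the quadric $Q$ with $[Q]$ equal to the tacnode is a union of two planes, each containing both $p_1$ and $p_2$. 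Such quadrics form the locus $Z$ of unordered pairs of planes through the line $\overline{p_1 p_2}$; since the planes through a line form a pencil, $Z$ is the symmetric square of a $\PP^1$ and embeds in $\PP^7$ as a linear $\PP^2$ (it is $\Sing(X_1)$ set-theoretically, with $X_1$ as in the proof of \cref{prop:general-tacnode}). Hence every general tacnodal symmetroid is the discriminant of a linear $3$-space $W$ in the $\PP^7$ associated to some pair $(p_1, p_2)$ with $W \cap Z \neq \varnothing$.

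Then I would count parameters. The pair $(p_1, p_2)$ ranges over $\PP^3 \times \PP^3$, contributing $6$ dimensions. With $(p_1, p_2)$ fixed, \eqref{eq:dim-fixed-intersection} applied with $k = 3$, $n = 7$, $M = Z$ (so $m = 2$) and $l = 0$ shows that the $3$-spaces $W \subset \PP^7$ meeting $Z$ form a family of dimension $(0 + 1)(2 - 0) + (3 - 0)(7 - 3) = 14$. Letting $(p_1, p_2)$ vary, the parameter space has dimension $6 + 14 = 20$, which already gives the bound $\dim \leq 20$ for the family of tacnodal symmetroids.

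For the matching lower bound I would check that the encoding is generically a correspondence. On one hand, a general such $W$ should yield an irreducible quartic symmetroid with a tacnode and nothing worse: the unique point of $W \cap Z$ is singular on the discriminant by \cref{lem:singular-at-base-point} (the associated quadric is singular along $\overline{p_1 p_2}$, which contains the base locus); one verifies that its tangent cone is a double plane, so the singularity is elliptic, and that the reduced tangent cone meets the discriminant in two double lines, putting it in type $S^{(1)}_4$ exactly as in \cref{lem:two-double-lines}; irreducibility and the absence of a triple point or of further coincidences are open conditions. On the other hand, the passage $(p_1, p_2, W) \mapsto D(W)$ is generically finite, since by \cref{lem:tacnode-basepoints} the base points are recovered from $W(S)$ and a general tacnodal symmetroid has only finitely many associated webs. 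Combining the two bounds gives dimension exactly $20$.

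The hard part is the first half of the previous paragraph: verifying that for a general $W$ the incidence condition $W \cap Z \neq \varnothing$ is equivalent to $D(W)$ having a tacnode, rather than a strictly worse degeneration (a triple point, a reducible surface, or a type $S^{(2)}_4$ or $S^{(3)}_4$ point) or a strictly milder one. Everything else — the parameter count via \eqref{eq:dim-fixed-intersection} and the generic finiteness — is routine bookkeeping once that correspondence is in hand.
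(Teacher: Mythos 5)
Your proposal is correct and follows essentially the same route as the paper: fix the two base points from \cref{lem:tacnode-basepoints} ($6$ parameters), identify the tacnode condition with $W$ meeting the plane of pairs of planes through $\overline{p_1p_2}$ (the paper's $\Sing(X_1)\cap\Sing(X_2)$), and apply \eqref{eq:dim-fixed-intersection} to get $14+6=20$. The only difference is that you explicitly flag the need to check that the incidence correspondence is generically finite and yields a genuine tacnode, a point the paper leaves implicit, relying on \cref{lem:two-double-lines,lem:tacnode-basepoints,prop:general-tacnode} and \cref{ex:tacnode}.
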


\begin{proof}
    As in the proof of \cref{prop:general-tacnode}, given two base points $p_1$ and $p_2$, a symmetroid with a tacnode corresponds to a $\PP^3 \subset \PP^7$ that intersects the plane $\Sing(X_1) \cap \Sing(X_2)$ in a point. It follows from \eqref{eq:dim-fixed-intersection} that there is a $14$-dimensional family of such $3$-spaces. A choice of base points $p_1$ and $p_2$ corresponds to a point in the $6$-dimensional space $\PP^3 \times \PP^3$. Hence the family of quartic symmetroids with a tacnode has in total dimension $14 + 6 = 20$.
\end{proof}

\begin{figure}[htbp]
    \centering
    \begin{minipage}{0.5\textwidth-1ex}
        \centering
        \includegraphics[height = 0.28\textheight]{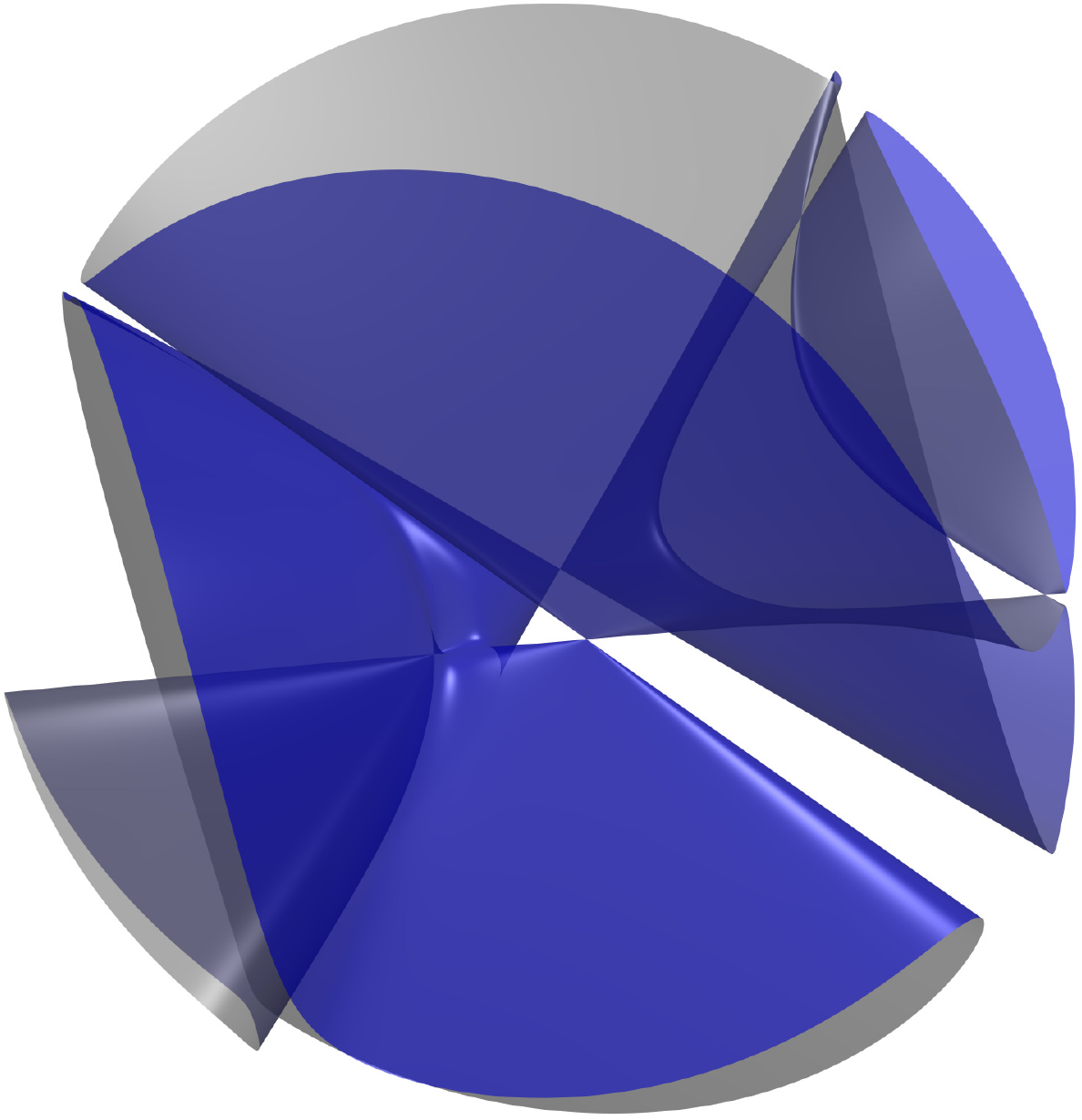}
    \end{minipage}
    \begin{minipage}{0.5\textwidth-1ex}
        \centering
        \includegraphics[height = 0.28\textheight]{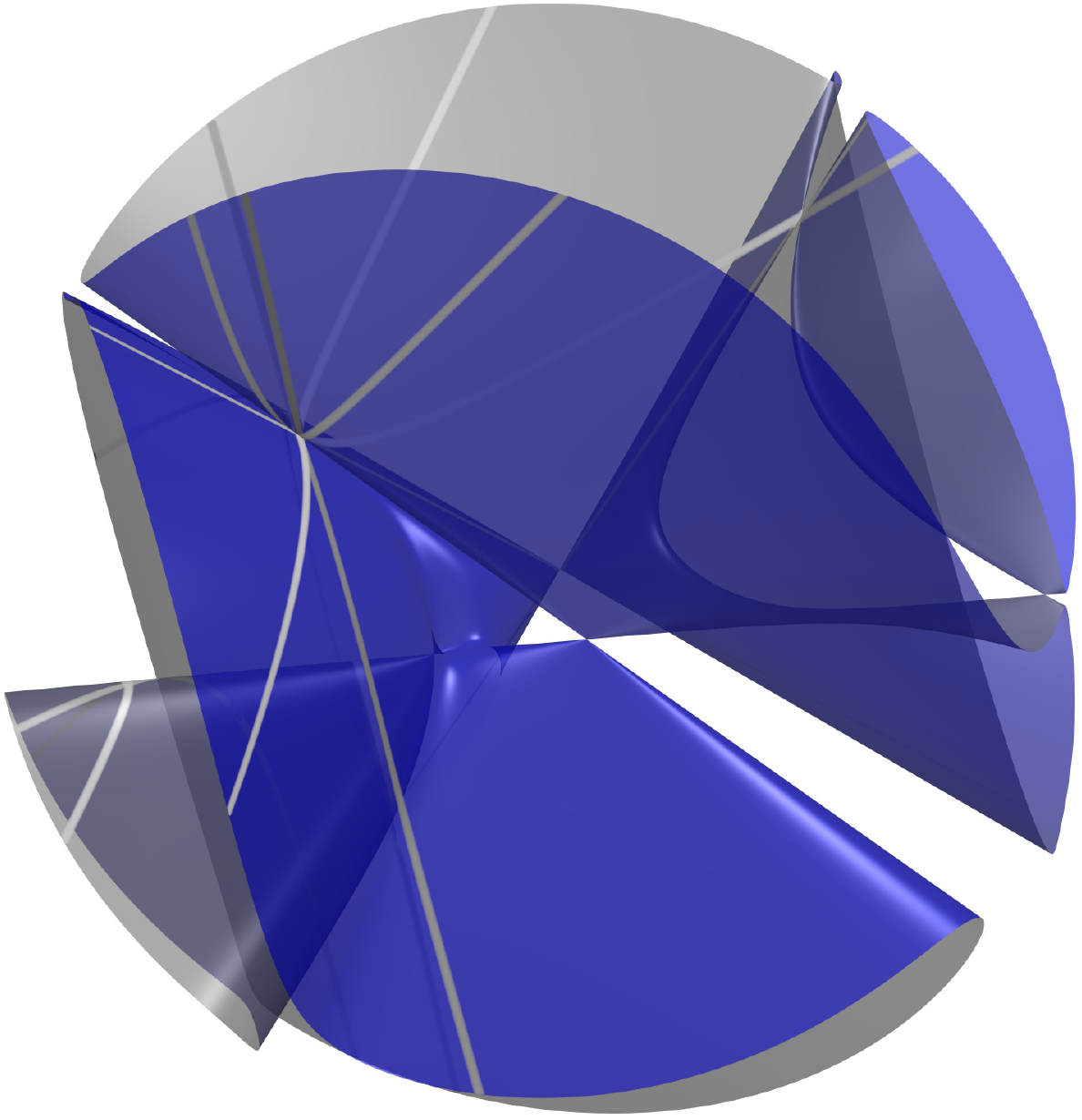}
    \end{minipage}
    \caption{A general quartic symmetroid with a tacnode. The figure to the right shows three plane sections meeting at the tacnode. The surface has six real nodes.}
\end{figure}

\subsection{Types \texorpdfstring{$\boldsymbol{S^{(2)}_4}$}{S(2)4} and \texorpdfstring{$\boldsymbol{S^{(3)}_4}\mkern-4mu$}{S(3)4}}

Hitherto, we have primarily argued in terms of the associated linear system of quadrics, but now we turn to ramification. This is a classical tool in the subject of surfaces.

Let $S \coloneqq \V(F)$ be a quartic surface and $p$ a double point on $S$. The projection from $p$ defines a two-to-one map, which extends to a morphism $\pi_p \colon \widetilde{S} \to \PP^2$ on the blow-up $\widetilde{S}$ of $S$ with centre $p$. If $F$ is as in \eqref{eq:quartic-double-point}, then $\pi_p$ is ramified along the sextic curve $R_p \coloneqq \V\big(F_3^2 - 4F_2F_4\big)$.
The following is one of the earliest results about symmetroids. For a modern proof and an extension, see \cite[Theorem~1.2]{Ott+14}.

\begin{theorem}[\cite{Cay69}]
    \label{thm:ramification}
    If $p$ is a \point{2} on a quartic symmetroid $S \subset \PP^3\mkern-4mu$, then the ramification locus $R_p = R_1 \cup R_2$ is the union of two cubic curves, $R_1$ and~$R_2$. 
\end{theorem}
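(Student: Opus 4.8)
The statement to prove is Cayley's theorem: if $p$ is a rank-2 point on a quartic symmetroid $S \subset \PP^3$, then the ramification sextic $R_p = \V(F_3^2 - 4F_2F_4)$ of the projection $\pi_p$ splits as a union of two cubic curves. The natural approach is to write the matrix $A$ defining $S$ in a normal form adapted to the rank-2 point. Since $\rank A(p) = 2$ and $A(p)$ is a symmetric $4\times4$ matrix, we can conjugate by a constant matrix so that $A(p)$ has its nonzero $2\times2$ block in the bottom-right corner, and then choose coordinates with $p = [1:0:0:0]$. After this change of basis the matrix takes the shape
\begin{align*}
    A
    =
    \begin{bmatrix}
        l_{00} & l_{01} & l_{02} & l_{03} \\
        l_{01} & l_{11} & l_{12} & l_{13} \\
        l_{02} & l_{12} & a_2x_0 + l_{22} & l_{23} \\
        l_{03} & l_{13} & l_{23} & a_3x_0 + l_{33}
    \end{bmatrix}
    \mkern-7mu,
\end{align*}
where the $l_{ij}$ are linear in $x_1, x_2, x_3$ only and $a_2, a_3 \neq 0$; the top-left $2\times2$ block carries no $x_0$ because $A(p)$ has rank exactly $2$ with the nonzero part in the $(2,3)$-block.

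First I would expand $\det(A)$ as a polynomial in $x_0$, reading off $F_2$, $F_3$, $F_4$ in the notation of \eqref{eq:quartic-double-point}. The coefficient $F_2$ of $x_0^2$ comes from taking the $a_2x_0, a_3x_0$ entries together with the $2\times2$ minor of the top-left block, giving $F_2 = a_2a_3(l_{00}l_{11} - l_{01}^2)$ — a product of $a_2a_3$ with the determinant of the symmetric $2\times2$ submatrix $B \coloneqq \begin{bmatrix} l_{00} & l_{01} \\ l_{01} & l_{11} \end{bmatrix}$. The key structural observation is that $F_3$ and $F_4$ likewise organise into expressions built from $B$, the $2\times4$ coupling block, and the lower block. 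Concretely, writing $A$ in block form $\begin{bmatrix} B & C^T \\ C & a x_0 + E \end{bmatrix}$ with $B, E$ symmetric $2\times2$, one computes via the Schur complement / generalised Laplace expansion that $\det(A) = \det(B)\det\!\big(ax_0 + E - C B^{-1} C^T\big)$ rationally, and clearing denominators shows $F_3^2 - 4F_2F_4$ factors through $\det(B)$. The cleanest route is to observe that $\det(A)$, as a quadratic form in $x_0$ with coefficients $F_2, F_3, F_4$, has discriminant $F_3^2 - 4F_2F_4$ equal to $a_2^2 a_3^2 \det(B)^2$ times the discriminant of the $x_0$-quadratic $\det(ax_0 + E - CB^{-1}C^T)$; and this inner discriminant, being the discriminant of a symmetric $2\times2$ pencil $\operatorname{diag}(a_2,a_3)x_0 + (E - CB^{-1}C^T)$, is itself a perfect square — the square of the difference of its two ``eigenvalue'' linear-cubic forms — after multiplying through by $\det(B)^2$ to clear the $B^{-1}$. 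Tracking degrees, one finds $F_3^2 - 4F_2F_4$ equals a constant times $\det(B)^2$ times (a cubic)$^2 \cdot \det(B)^{-2}$, i.e. a product of two cubics. I would carry this out carefully rather than hand-wave the clearing of denominators.

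The main obstacle is precisely this bookkeeping with $B^{-1}$: one must show that the ostensibly rational expression $F_3^2 - 4F_2F_4 = \det(B)^2 \cdot \mathrm{disc}_{x_0}\!\big(\operatorname{diag}(a_2,a_3)x_0 + E - CB^{-1}C^T\big)$ is in fact a polynomial that factors into two honest cubic polynomials in $\C[x_1,x_2,x_3]$, with no spurious denominator surviving. The resolution is that for a symmetric $2 \times 2$ matrix $N$, $\mathrm{disc}(N) = (N_{11}-N_{22})^2 + 4N_{12}^2 = (\operatorname{tr} N)^2 - 4\det N$ is a sum/difference of squares, so $\det(B)^2 \cdot \mathrm{disc}(\operatorname{diag}(a_2,a_3)x_0 + E - CB^{-1}C^T)$ equals $\big((a_2x_0 + E_{11} - \ast)\det B - (a_3 x_0 + E_{22} - \ast)\det B\big)^2 + 4\big(\ast \cdot \det B\big)^2$ where each $\ast \cdot \det B$ is a genuine cubic (the adjugate $\det(B)\, B^{-1}$ is polynomial). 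Hence $F_3^2 - 4F_2F_4 = G^2 + H^2 = (G+iH)(G-iH)$ with $G, H$ of degree $3$, giving $R_1 = \V(G+iH)$ and $R_2 = \V(G-iH)$. I would close by noting the degree count: $F_3$ has degree $3$ and $F_4$ degree $4$, so $F_3^2 - 4F_2F_4$ has degree $6$, consistent with each factor being a cubic, completing the proof.
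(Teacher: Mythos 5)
Your argument is correct and would prove the theorem, but it is organised differently from the factorisation the paper actually relies on. The paper does not reprove Cayley's result \dash it cites \cite{Cay69} and \cite[Theorem~1.2]{Ott+14} \dash but its working form of the proof sits inside the proof of \cref{lem:not-on-a-conic}: there the rank-$2$ matrix $A(p)$ is normalised to the off-diagonal form $E_{01}+E_{10}$ rather than to $\operatorname{diag}(0,0,a_2,a_3)$, so that $x_0$ occurs only in the $(0,1)$-entry, and then $F_3^2-4F_2F_4=r_1r_2$ where $r_1$ and $r_2$ are the $(0,0)$- and $(1,1)$-principal $(3\times 3)$-minors of $M=A|_{x_0=0}$. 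This is exactly the Desnanot--Jacobi identity on complementary minors of a symmetric matrix,
\begin{align*}
    \det\big(M^{0,0}\big)\det\big(M^{1,1}\big)-\det\big(M^{0,1}\big)^{2}=\det(M)\det\big(M^{01,01}\big),
\end{align*}
and it exhibits the two cubics directly as determinants of $(3\times 3)$-matrices of linear forms \dash a description the paper needs afterwards, since \cref{lem:not-on-a-conic} identifies $\V(r_1,F_4)$ with twice the scheme cut out by the $(3\times 3)$-minors of a $(3\times 4)$-submatrix. Your Schur-complement route reaches the same conclusion by a different decomposition: the denominator-clearing you rightly insist on carrying out is this minor identity in disguise, since $\det(B)P_{ij}=\det(B)E_{ij}-\big(C\operatorname{adj}(B)C^{T}\big)_{ij}$ is an honest cubic and $F_3^2-4F_2F_4=G^2+4a_2a_3H^2$ is then a polynomial identity (hence valid by Zariski density even where $\det(B)$ degenerates). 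What your version costs is the explicit determinantal form of the two cubics, which you only recover as $G\pm 2\sqrt{-a_2a_3}\,H$; what it buys is that the splitting over $\C$ becomes transparent, arising from the factorisation of the discriminant of a binary quadratic. One small imprecision to fix: in your second paragraph you call the inner discriminant a ``perfect square,'' whereas it is in general a product of two \emph{distinct} cubic factors, as your final paragraph correctly states.
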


\noindent
The next idea is taken from the proof of \cite[Theorem~1.2]{Ott+14}:

{
    \pretolerance = 124
    \begin{lemma}
        \label{lem:not-on-a-conic}
        Let $S \coloneqq \V\big(F_2x_0^2 + F_3x_0 + F_4\big) \subset \PP^3$ be an irreducible quartic symmetroid with a \point{2} at $p \coloneqq [1 : 0 : 0 : 0]$, where $F_d$ is a form of degree $d$ in $\C[x_1, x_2, x_3]$. The ramification locus $R_p$ splits into two cubics $\V(r_1)$ and $\V(r_2)$. Then $\V(r_1, F_4) = 2Z$ is two times a scheme $Z$ of length $6$, and $Z$ is \emph{not} contained in a conic section.
    \end{lemma}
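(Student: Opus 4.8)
The plan is to normalise the symmetric matrix at the \point{2}~$p$, read off $F_2,F_3,F_4$, and then argue by divisor bookkeeping on the plane cubic $C_1=\V(r_1)$, where $\PP^2$ carries coordinates $x_1,x_2,x_3$. First I would put things in normal form: since $p$ is a \point{2}, conjugate $A(x)$ so that the coefficient of $x_0$ is the matrix with a hyperbolic block $\left[\begin{smallmatrix}0&1\\ 1&0\end{smallmatrix}\right]$ in the top-left corner and zeros elsewhere, as in the proof of \cref{lem:singularity-is-a-base-point}. Writing the remaining entries as linear forms $l_{ij}\in\C[x_1,x_2,x_3]_1$ and splitting $M=\left[\begin{smallmatrix}N&R\\ R^{T}&T\end{smallmatrix}\right]$ into $(2\times2)$-blocks with $N$ carrying the two copies of $x_0$ and $T=\left[\begin{smallmatrix}l_{22}&l_{23}\\ l_{23}&l_{33}\end{smallmatrix}\right]$, the Schur-complement expansion $\det M=\det T\cdot\det(N-RT^{-1}R^{T})$, after clearing denominators, gives
\[
    F_2=-\det T,\qquad F_3=-2\widetilde n_{01},\qquad F_4\det T=\widetilde n_{00}\widetilde n_{11}-\widetilde n_{01}^{2},
\]
where $\widetilde n_{ij}\coloneqq(\det T)l_{ij}-\big(R(\operatorname{adj}(T))R^{T}\big)_{ij}$ are cubic forms. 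In particular $F_3^{2}-4F_2F_4=4\widetilde n_{00}\widetilde n_{11}$, which is \cref{thm:ramification}: up to nonzero scalars $r_1=\widetilde n_{00}$ and $r_2=\widetilde n_{11}$, and there is the auxiliary identity $\widetilde n_{01}^{2}=\widetilde n_{00}\widetilde n_{11}+F_2F_4$.

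For the first assertion I would restrict everything to $C_1=\V(r_1)$, which I may take to be integral (the degenerate cases only arise when the tangent cone degenerates, and are discussed below). On $C_1$ one has $F_3^{2}=4F_2F_4$, so as effective divisors on $C_1$,
\[
    2\operatorname{div}(F_3|_{C_1})=\operatorname{div}(F_2|_{C_1})+\operatorname{div}(F_4|_{C_1}),
\]
of degrees $18$, $6$, $12$. The crucial point is that $\operatorname{div}(F_2|_{C_1})$ is even. Since intersection multiplicities are symmetric, at a point $q$ where the conic $\V(F_2)=\V(\det T)$ is smooth the multiplicity of $C_1\cap\V(F_2)$ at $q$ equals $\operatorname{ord}_q\big(r_1|_{\V(\det T)}\big)$; on $\V(\det T)$ the matrix $T$ has rank $1$, so $\operatorname{adj}(T)$ has rank $1$ with column space orthogonal to that of $T$, and substituting yields $l_{22}\widetilde n_{00}\equiv-\big(l_{02}l_{23}-l_{03}l_{22}\big)^{2}\pmod{\det T}$, while the line $\V(l_{22})$ is tangent to the conic $\det T=0$. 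Hence $r_1$ restricts to an even divisor on $\V(\det T)$ (when $\V(F_2)$ is a double line this is immediate), so $\operatorname{div}(F_2|_{C_1})=2W$ with $\deg W=3$, and therefore $\operatorname{div}(F_4|_{C_1})=2\big(\operatorname{div}(F_3|_{C_1})-W\big)\eqqcolon 2Z$ with $Z$ effective of degree $6$; that is, $\V(r_1,F_4)=2Z$ with $\ell(Z)=6$.

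For the second assertion, suppose $Z\subset\V(G)$ for a conic $G$. As $\deg\big(C_1\cap\V(G)\big)=6=\ell(Z)$ and $Z\subseteq C_1\cap\V(G)$, equality holds, so $2Z=C_1\cap\V(G^{2})$ on $C_1$; comparing with $2Z=\V(r_1,F_4)=C_1\cap\V(F_4)$ forces $F_4=cG^{2}+r_1L$ for a scalar $c$ and a linear form $L$. Substituting into $\widetilde n_{01}^{2}=\widetilde n_{00}\widetilde n_{11}+F_2F_4$ and using $r_1=\widetilde n_{00}$ up to a scalar gives $\widetilde n_{01}^{2}-cF_2G^{2}\in(\widetilde n_{00})$, so on $C_1$ the form $F_2$ is a perfect square whose square root is a global section of $\mathcal O_{C_1}(1)$ with divisor $W$; hence $W\in|\mathcal O_{C_1}(1)|$, i.e.\ $W$ lies on a line $\ell$. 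But then the length-$3$ scheme $W$ sits inside $\V(\ell)\cap\V(F_2)$, which has length $2$ when $\V(F_2)$ is a smooth conic --- a contradiction. This settles the case where the tangent cone of $S$ at $p$ has rank $3$.

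The remaining case --- $F_2$ a perfect square, so the tangent cone a double plane --- is the main obstacle, and it is precisely the configuration one confronts when testing whether $S^{(2)}_4$ or $S^{(3)}_4$ can be symmetroids. Here $C_1$ need not be integral and the argument above breaks down, so I would fall back on the explicit matrix: normalise $T$ to $\left[\begin{smallmatrix}x_2&ix_1\\ ix_1&0\end{smallmatrix}\right]$ (so $F_2=x_1^{2}$), write out $F_3$, $F_4$, $r_1$ and $Z$ in terms of the $l_{ij}$, and show that the congruence $F_4=cG^{2}+r_1L$ forced by $Z\subset\V(G)$, fed once more into $\widetilde n_{01}^{2}=\widetilde n_{00}\widetilde n_{11}+F_2F_4$, makes a linear form divide $\det M$, contradicting the irreducibility of $S$ (cf.\ \cref{lem:curve-in-base-locus}). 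Making this precise --- tracking the degenerations of $r_1$ that are forced here and splitting according to whether the binary quadratic in $x_1x_0$ and $G$ built from $F_2$, $F_3$ and $cG^{2}$ is a perfect square --- is where the remaining work lies.
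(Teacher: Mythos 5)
Your normal form, the identity $r_1r_2=F_3^2-4F_2F_4$ with $r_1,r_2$ the two diagonal minors, and the relation $\widetilde n_{01}^{2}=\widetilde n_{00}\widetilde n_{11}+F_2F_4$ all agree with the paper's setup, and your divisor-theoretic argument does prove both assertions when the tangent cone $\V(F_2)$ is a smooth conic and $C_1=\V(r_1)$ is integral. But the proof is not complete, and the gap sits exactly where the lemma is needed. You explicitly defer the case where $F_2$ is a perfect square (and silently skip rank $2$; your contradiction via ``$\V(\ell)\cap\V(F_2)$ has length $2$'' only applies to a smooth conic, though that subcase is easily patched: $F_2|_{C_1}=c\ell^2|_{C_1}$ forces $F_2=c\ell^2$ since $(r_1)$ contains no quadrics). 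The lemma, however, is invoked in \cref{prop:no-type2,prop:no-type3} for surfaces whose tangent cone at the elliptic double point is the double plane $\V\big(x_1^2\big)$, and there $C_1$ need not be integral either (in \cref{prop:no-type3} one of the cases is $R_1$ a triple line). So the configuration you leave as ``remaining work'' is the only one the paper actually uses, and your integrality assumption on $C_1$ fails precisely there.

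The paper's proof avoids all of this by working determinantally rather than divisorially: it identifies $Z$ with the scheme $Z'$ of $(3\times3)$-minors of the $3\times4$ submatrix $A$ of $M$ obtained by deleting the first row ($Z'\subseteq\V(r_1,F_4)$ because the minors are the cofactors of $M$ along that row, $2Z'\subseteq\V(r_1,F_4)$ by the symmetry of $M$, and equality follows by degree count), and then argues that if $Z$ lay on a conic, $I_Z$ would be a $(2,3)$ complete intersection whose four cubics $x_1q,x_2q,x_3q,g$ have a Hilbert--Burch syzygy matrix $A'$ with identically vanishing maximal minors; since $A$ presents the same ideal, its maximal minors \dash and hence $F_4=\det(M)$ \dash would vanish, contradicting irreducibility. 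This is uniform in the rank of $F_2$ and in the geometry of $C_1$. To complete your route you would either have to carry out in full the explicit $F_2=x_1^2$ computation you only sketch, or replace the divisor bookkeeping by this determinantal argument.
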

}

\begin{proof}
    After conjugating with an appropriate matrix, we may assume that the matrix defining $S$ is
    \begin{align*}
        \begin{bmatrix}
              l_{00}     & x_0 + l_{01} & l_{02} & l_{03} \\
            x_0 + l_{01} &   l_{11}     & l_{12} & l_{13} \\
              l_{02}     &   l_{12}     & l_{22} & l_{23} \\
              l_{03}     &   l_{13}     & l_{23} & l_{33}
        \end{bmatrix}
        \mkern-7mu,
    \end{align*}
    where the $l_{ij}$ are linear forms in $\C[x_1, x_2, x_3]$. Then $F_4$ is the determinant of
    \begin{align*}
        M
        \coloneqq
        \begin{bmatrix}
            l_{00} & l_{01} & l_{02} & l_{03} \\
            l_{01} & l_{11} & l_{12} & l_{13} \\
            l_{02} & l_{12} & l_{22} & l_{23} \\
            l_{03} & l_{13} & l_{23} & l_{33}
        \end{bmatrix}
        \mkern-7mu.
    \end{align*}
    Also, $r_1$ is the $(0, 0)$-minor and $r_2$ the $(1, 1)$-minor of $M\mkern-2mu$.

    The fact that $\V(r_1, F_4)$ is a double scheme $2Z$ can be seen from the identity $r_1r_2 = F_3^2 - 4F_2F_4$, which shows that $F_2F_4$ is a square modulo $r_1$. We claim that $Z$ is equal to the scheme $Z'$ given by the $(3 \times 3)$-minors of the submatrix
    \begin{align*}
        A
        \coloneqq
        \begin{bmatrix}
            l_{01} & l_{11} & l_{12} & l_{13} \\
            l_{02} & l_{12} & l_{22} & l_{23} \\
            l_{03} & l_{13} & l_{23} & l_{33}
        \end{bmatrix}
        \mkern-7mu.
    \end{align*}
    Clearly, $Z' \subseteq \V(r_1, F_4)$. By the symmetry of $M\mkern-2mu$, it follows that $2Z' \subseteq \V(r_1, F_4)$. Equality follows by considering degrees. Thus $Z = Z'\mkern-3mu$.

    Assume for contradiction that $Z$ is contained in a conic section. Then $Z = \V(q, g)$ for a quadratic form $q$ and a cubic form $g$. The vector space of cubics vanishing on $Z$ is $\langle x_1q, x_2q, x_3q, g \rangle$. Considering the syzygies between the generators, we construct the Hilbert--Burch matrix
    \begin{align*}
        A'
        \coloneqq
        \begin{bmatrix}
            x_2 &           -x_1 &   0  & 0 \\
            x_3 &             0  & -x_1 & 0 \\
             0  & \phantom{-}x_3 & -x_2 & 0
        \end{bmatrix}
        \mkern-7mu.
    \end{align*}
    The $(3 \times 3)$-minors of $A'$ are identically zero. The matrix $A$ is row equivalent with $A'\mkern-3mu$, so $F_4 = \det(M) = 0$. This contradicts the assumption that $S$ is irreducible.
\end{proof}

\noindent
We show that if surfaces of type $S^{(2)}_4$ or $S^{(3)}_4$ satisfy the conclusions of \cref{thm:ramification,lem:not-on-a-conic}, then they degenerate to type $S^{(1)}_4\mkern-4mu$.

\begin{proposition}
    \label{prop:no-type2}
    Let $S$ be a rational quartic surface of type $S^{(2)}_4\mkern-4mu$. Then $S$ is not a symmetroid.
\end{proposition}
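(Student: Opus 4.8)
The plan is to assume, for contradiction, that a surface $S$ of type $S^{(2)}_4$ --- with defining polynomial as in \eqref{eq:type2} --- is a symmetroid, and to show that the symmetroid property forces the cubic term to be divisible by $x_1$, so that $S$ in fact takes the shape \eqref{eq:type1} and hence degenerates to type $S^{(1)}_4\mkern-4mu$. In the notation of \eqref{eq:quartic-double-point} we have $F_2 = x_1^2$, $F_3 = 2x_1x_3^2 + x_1x_3B_1 + A_3$ and $F_4 = x_3^4 + B_1x_3^3 + B_2x_3^2 + B_3x_3 + B_4$. Since the reduced tangent cone $\V(x_1)$ is a double plane and $S$ is not singular along a curve, the argument of \cref{lem:elliptic-rank2} applies and $p \coloneqq [1 : 0 : 0 : 0]$ is a \point{2}. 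Therefore \cref{thm:ramification} gives $F_3^2 - 4x_1^2F_4 = r_1r_2$ for two cubic forms $r_1, r_2 \in \C[x_1, x_2, x_3]$.

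First I would reduce this identity modulo $x_1$. As $F_3 \equiv A_3 \pmod{x_1}$ and $A_3(0, x_2) = ax_2^3$ for a constant $a$, we obtain $\overline{r_1}\,\overline{r_2} = a^2x_2^6$ in $\C[x_2, x_3]$. If $a = 0$, then $x_1 \mid F_3$, so \eqref{eq:type2} is of the form \eqref{eq:type1} and $S$ is of type $S^{(1)}_4\mkern-4mu$, contradicting that $S$ is of type $S^{(2)}_4\mkern-4mu$. If $a \neq 0$, then unique factorisation in $\C[x_2, x_3]$, together with $\deg r_i = 3$, forces $r_i = \lambda_ix_2^3 + x_1q_i$ with $q_i \in \C[x_1, x_2, x_3]_2$ and $\lambda_1\lambda_2 = a^2$.

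The heart of the proof is to feed these parametrisations, together with $F_3 = ax_2^3 + x_1w$ for a suitable $w \in \C[x_1, x_2, x_3]_2$, back into $r_1r_2 = F_3^2 - 4x_1^2F_4$. After cancelling the common factor $x_1$, a divisibility argument gives $\lambda_1q_2 + \lambda_2q_1 - 2aw = x_1m$ for some linear form $m$, and a further substitution, using $\lambda_1\lambda_2 = a^2$ and $r_1 = \lambda_1x_2^3 + x_1q_1$, yields the identity
\begin{align*}
    4\lambda_1^2F_4 = (\lambda_1w - aq_1)^2 - \lambda_1m\,r_1.
\end{align*}
Writing $H \coloneqq \lambda_1w - aq_1 \in \C[x_1, x_2, x_3]_2$, this says $(r_1, F_4) = (r_1, H^2)$, so $F_4$ is a square modulo $r_1$. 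Consequently the length-$6$ scheme $Z$ with $\V(r_1, F_4) = 2Z$ furnished by \cref{lem:not-on-a-conic} coincides with $\V(r_1, H)$ and therefore lies on the conic section $\V(H)$ --- contradicting the last assertion of \cref{lem:not-on-a-conic}. Hence the case $a \neq 0$ is impossible, and in every case $S$ fails to be a symmetroid.

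I expect the displayed identity to be the main obstacle: since $x_1^2q_1q_2$ (of degree $4$) and $x_1x_2^3(\lambda_1q_2 + \lambda_2q_1)$ (of degree $3$) both contribute to several powers of $x_1$, the comparison of the two sides of $r_1r_2 = F_3^2 - 4x_1^2F_4$ must be organised as a single divisibility statement rather than a naive term-by-term match. A secondary point is to justify that $\V(r_1, H)$ is honestly the scheme $Z$, so that the phrase ``contained in a conic section'' applies as in \cref{lem:not-on-a-conic}; this is automatic when $\V(r_1)$ and $\V(H)$ meet transversally in six reduced points, which holds for general $A_3, B_1, \ldots, B_4$ and hence for the general symmetroid under consideration.
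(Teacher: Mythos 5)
Your proposal is correct, and it reaches the same final contradiction as the paper \dash via \cref{lem:not-on-a-conic} \dash but by a genuinely different and noticeably shorter route. The paper analyses how the quadruple point of the ramification sextic at $q = [0:0:1]$ can distribute between the two cubics $R_1$ and $R_2$ (two double points, or a triple point plus a simple point), and in the second case runs a long chain of coefficient comparisons in powers of $x_3$ that successively pins down $C_1$, $C_2$, $C_3$, $D_3$ and finally $B_3$ and $B_4$, before exhibiting two explicit square identities. You instead reduce the factorisation $r_1r_2 = F_3^2 - 4x_1^2F_4$ modulo $x_1$: since $F_3 \equiv A_3 \equiv ax_2^3$, unique factorisation in $\C[x_2, x_3]$ forces $r_i \equiv \lambda_i x_2^3 \pmod{x_1}$ with $\lambda_1\lambda_2 = a^2$, and your dichotomy $a = 0$ versus $a \neq 0$ absorbs both the paper's recurring observation that $x_1 \mid A_3$ yields a tacnode and its entire Case I. I have checked your displayed identity $4\lambda_1^2F_4 = (\lambda_1 w - aq_1)^2 - \lambda_1 m\, r_1$; the difference of the two sides is exactly $q_1\big(\lambda_1(\lambda_1 q_2 + \lambda_2 q_1 - 2aw) - \lambda_1 x_1 m\big)$, which vanishes by your divisibility step, so the single identity replaces the paper's whole Case II. What your approach buys is uniformity and brevity; what it gives up is the explicit normal forms for $B_3$ and $B_4$ that the paper's computation records along the way. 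On your final step: passing from ``$F_4$ is a square modulo $r_1$ with quadratic square root $H$'' to ``$Z \subset \V(H)$'' does require identifying the half of the divisor $\V(r_1, F_4)$ on the cubic with $\V(r_1, H)$, but the paper's own proof makes the identical leap without comment when it concludes that $\V(r_i, f_4) = 2Z_i$ with $Z_i$ on a conic contradicts \cref{lem:not-on-a-conic}; your genericity remark therefore leaves you on no worse footing than the original argument.
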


\begin{proof}
    We may assume that the equation defining $S$ is as in \eqref{eq:type2}. The ramification locus $R$ for the projection from the elliptic double point~$p$, is then given by
    \begin{align}
        \label{eq:type2-ramification}
        x_1 \big( x_1 \big(B_1^2 - 4B_2\big) + 4A_3 \big) x_3^2
        +
        x_1 (2A_3B_1 - 4x_1B_3) x_3
        +
        A_3^2 - 4x_1^2B_4.
    \end{align}
    The sextic curve $R$ has a quadruple point at $q \coloneqq [0 : 0 : 1]$.

    The following observation is used repeatedly throughout the proof: From \eqref{eq:type1} and \eqref{eq:type2}, we see that if $x_1$ divides $A_3$, then $S$ has a tacnode at $p$. This contradicts the assumption that $S$ is of type $S^{(2)}_4\mkern-4mu$. By \eqref{eq:type2-ramification}, this is equivalent to $\V(x_1)$ being a component of $R$. Assume that $x_1$ does not divide $A_3$.

    Assume for contradiction that $S$ is a symmetroid. \cref{lem:elliptic-rank2} and \cref{thm:ramification} imply that $R$ splits into two cubics, $R_1$ and $R_2$. Since $R$ has a quadruple point at $q$, there are two possibilities: Either both $R_1$ and $R_2$ have double points at $q$, or $R_1$ has a triple point and $R_2$ passes only once through $q$.

    \vskip0.5\baselineskip\noindent
    \emph{Case \emph{I} \dash $R_1$ and $R_2$ have double points at $q$.}
    \vskip0.5\baselineskip

    \noindent
    The equations for $R_1$ and $R_2$ can be written as $r_1 \coloneqq C_2x_3 + C_3$ and $r_2 \coloneqq D_2x_3 + D_3$, respectively, where $C_d$ and $D_d$ are forms of degree~$d$ in $\C[x_1, x_2]$. Since $R_1 \cup R_2 = R$, we have that $r_1r_2$ equals \eqref{eq:type2-ramification}. Equating the coefficients of the $x_3^n$ terms of $r_1r_2$ and \eqref{eq:type2-ramification}, produces the following system of equations:
    \begin{align}
        \label{eq:caseI-1}
        C_2 D_2 &= x_1 \big( x_1 \big(B_1^2 - 4B_2\big) + 4A_3 \big),
        \\
        \label{eq:caseI-2}
        C_2 D_3 + C_3 D_2 &= x_1 (2A_3B_1 - 4x_1B_3),
        \\
        \nonumber
        C_3 D_3 &= A_3^2 - 4x_1^2B_4.
    \end{align}
    From \eqref{eq:caseI-1}, we get that $x_1$ is a factor in either $C_2$ or $D_2$. Suppose that $x_1$ divides $C_2$. Inserting this into \eqref{eq:caseI-2}, we get that $x_1$ divides either $C_3$ or $D_2$. Suppose first that $x_1$ is a factor in $C_3$. Then $\V(x_1)$ is a component of $R_1$, so $\V(x_1)$ is a component of $R$, which implies that $S$ has a tacnode at $p$. Now suppose that $x_1$ is a factor in $D_2$. Inserting this back into \eqref{eq:caseI-1}, we find that $x_1$ divides $A_3$. Hence $S$ has a tacnode at~$p$.

    \vskip0.5\baselineskip\noindent
    \emph{Case \emph{II} \dash $R_1$ has a triple point and $R_2$ has a single point at $q$.}
    \vskip0.5\baselineskip

    \noindent
    The equations for $R_1$ and $R_2$ can be written as $r_1 \coloneqq C_1x_3^2 + C_2x_3 + C_3$ and $r_2 \coloneqq D_3$, respectively, where $C_d$ and $D_d$ are forms of degree~$d$ in $\C[x_1, x_2]$. We can assume that $x_1$ is not a factor in $D_3$, as that would imply that $\V(x_1)$ is a component of $R$. Equating the coefficients of the $x_3^n$ terms of $r_1r_2$ and \eqref{eq:type2-ramification}, produces the following system of equations:
    \begin{align}
        \label{eq:caseII-1}
        C_1 D_3 &= x_1 \big( x_1 \big(B_1^2 - 4B_2\big) + 4A_3 \big),
        \\
        \label{eq:caseII-2}
        C_2 D_3 &= x_1 (2A_3B_1 - 4x_1B_3),
        \\
        \label{eq:caseII-3}
        C_3 D_3 &= A_3^2 - 4x_1^2B_4.
    \end{align}
    Since $x_1$ does not divide $D_3$, we get from \eqref{eq:caseII-1} that, up to scalar, $C_1 = x_1$ and $D_3 = x_1 \big(B_1^2 - 4B_2\big) + 4A_3$. Similarly, \eqref{eq:caseII-2} yields that $C_2 = x_1C'_1$ for some linear form $C'_1 \in \C[x_1, x_2]$. Substituting for $C_2$ and $D_3$ into \eqref{eq:caseII-2} and cancelling $x_1$, we get
    \begin{align}
        \nonumber
        \big( x_1 \big(B_1^2 - 4B_2\big) + 4A_3 \big)C'_1
        &= 2A_3B_1 - 4x_1B_3,
        \shortintertext{which is equivalent to}
        \label{eq:caseII-4}
        x_1 \big( \big(B_1^2 - 4B_2 \big) C'_1 + 4B_3 \big)
        &= 2A_3 (B_1 - 2C'_1).
    \end{align}
    Either $A_3$ or $B_1 - 2C'_1$ is divisible by $x_1$, but we have assumed the former to be incorrect. Therefore, $C'_1 = \frac{1}{2}B_1 - ax_1$ for some $a \in \C\mkern-1mu$. Putting this back into \eqref{eq:caseII-4} and cancelling $x_1$, we obtain
    \begin{gather*}
        \big(B_1^2 - 4B_2 \big) \bigg( \frac{1}{2}B_1 - ax_1 \bigg) + 4B_3
        =
        4aA_3,
        \shortintertext{so}
        B_3 = aA_3 + \frac{1}{4}\big(B_1^2 - 4B_2 \big) \bigg( ax_1 -  \frac{1}{2}B_1 \bigg)\mkern-2mu.
    \end{gather*}
    Inserting the expression we found for $D_3$ into \eqref{eq:caseII-3} gives
    \begin{align}
        \nonumber
        \big( x_1 \big(B_1^2 - 4B_2\big) + 4A_3 \big) C_3 &= A_3^2 - 4x_1^2B_4,
        \shortintertext{which can be transformed to}
        \label{eq:caseII-5}
        x_1 \big( \big(B_1^2 - 4B_2\big)C_3 + 4x_1B_4) &= A_3 (A_3 - 4C_3).
    \end{align}
    Because $x_1$ is not a factor in $A_3$, we conclude that $A_3 - 4C_3$ is divisible by $x_1$. Thus, $C_3 = \frac{1}{4}A_3 - x_1C'_2$ for some quadratic form $C'_2 \in \C[x_1, x_2]$. Putting this into \eqref{eq:caseII-5} and cancelling $x_1$, we get
    \begin{gather}
        \nonumber
        \big(B_1^2 - 4B_2\big) \bigg(\frac{1}{4}A_3 - x_1C'_2 \bigg) + 4x_1 B_4
        =
        4A_3 C'_2,
        \shortintertext{which is equivalent to}
        \label{eq:caseII-6}
        x_1 \big(4B_4 - \big(B_1^2 - 4B_2\big)C'_2 \big)
        =
        A_3 \bigg(4C'_2 - \frac{1}{4} \big(B_1^2 - 4B_2 \big) \bigg)\mkern-2mu.
    \end{gather}
    Again, $A_3$ is not divisible by $x_1$, so $x_1$ is a factor in $4C'_2 - \frac{1}{4} \big(B_1^2 - 4B_2 \big)$. Hence $C'_2 = \frac{1}{16} \big(B_1^2 - 4B_2 \big) + x_1C''_1$ for some linear form $C''_1 \in \C[x_1, x_2]$. Substituting for $C'_2$ in \eqref{eq:caseII-6} and cancelling $x_1$, we obtain 
    \begin{gather*}
    4B_4 - \big(B_1^2 - 4B_2\big) \bigg( \frac{1}{16} \big(B_1^2 - 4B_2 \big) + x_1C''_1 \bigg) = 4A_3C''_1,
    \shortintertext{so}
    B_4 = A_3C''_1 + \frac{1}{64} \big(B_1^2 - 4B_2\big)^{\mkern-3mu2} + \frac{1}{4}x_1\big(B_1^2 - 4B_2\big)C''_1.
    \end{gather*}
    We have now described the necessary conditions on $B_3$ and $B_4$ for $R$ to split into cubics.
    
    Using the above expressions for $r_1$, $r_2$, $B_3$ and $B_4$, we can verify the relations
    \begin{align*}
        \big(8x_3^2 - 4(4ax_1 - B_1)x_3 - 16x_1C_1 - \big(B_1^2 - 4B_2\big) \big)^{\mkern-3mu2}
        &= 64f_4 - 256(ax_3 + C''_1)r_1,
        \\
        \big( 8x_3^2 + 4B_1x_3 - \big(B_1^2 - 4B_2 \big) \big)^{\mkern-3mu2}
        &= 64f_4 - 16(ax_3 + C''_1)r_2,
    \end{align*}
    where $f_4 \coloneqq x_3^4 + B_1x_3^3 + B_2x_3^2 + B_3x_3 + B_4$. Hence for $i = 1, 2$, we have that $\V(r_i, f_4) = 2Z_i$ for a scheme $Z_i$ that is contained in a conic. This contradicts \cref{lem:not-on-a-conic}, showing that $S$ is not a symmetroid.
\end{proof}

\begin{proposition}
    \label{prop:no-type3}
    Let $S$ be a rational quartic surface of type $S^{(3)}_4\mkern-4mu$. Then $S$ is not a symmetroid.
\end{proposition}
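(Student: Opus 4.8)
The plan is to follow the template of the proof of \cref{prop:no-type2}, which in the present case runs more smoothly. From the normal form \eqref{eq:type3} one reads off $F_2 = x_1^2$ and $F_3 = 2(x_1x_3A_1 + A_3)$, and a direct expansion of $F_3^2 - 4F_2F_4$ shows that the ramification sextic of the projection from the elliptic double point $p \coloneqq [1 : 0 : 0 : 0]$ is $R_p = \V(r)$ with
\begin{equation*}
    r = x_1^3 x_3^3 + x_1^2\big(A_1^2 - B_2\big) x_3^2 + x_1\big(2A_1A_3 - x_1B_3\big) x_3 + A_3^2 - x_1^2 B_4.
\end{equation*}
As in \cref{prop:no-type2}, I would first record the reduction: if $x_1$ divides $A_3$, then $x_1$ factors out of $F_3$, so the equation of $S$ takes the shape \eqref{eq:type1} and $S$ has a tacnode at $p$, contrary to the assumption that $S$ is of type $S^{(3)}_4$. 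Reading off the $x_3$-free term $A_3^2 - x_1^2B_4$ of $r$, this is the same as saying that $\V(x_1)$ is not a component of $R_p$; so I assume henceforth that $x_1 \nmid A_3$, and therefore $x_1 \nmid r$.

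Suppose for contradiction that $S$ is a symmetroid. A surface of type $S^{(3)}_4$ is not singular along a curve and has no triple point, so \cref{lem:elliptic-rank2} applies and $p$ is a \point{2}; then \cref{thm:ramification} yields a factorisation $r = r_1 r_2$ into cubic forms $r_1, r_2 \in \C[x_1, x_2, x_3]_3$. Since $x_1 \nmid r$, neither $r_i$ is divisible by $x_1$. The leading coefficient of $r$ in $x_3$ is $x_1^3$, so $r$ has $x_3$-degree $3$ --- equivalently, $q \coloneqq [0 : 0 : 1]$ is a triple point of $R_p$ with tangent cone $\V(x_1^3)$ --- and hence, writing $e_i$ for the $x_3$-degree of $r_i$, we have $e_1 + e_2 = 3$. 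Up to relabelling there are exactly two cases, $(e_1, e_2) = (3, 0)$ and $(e_1, e_2) = (2, 1)$, and in each I would simply compare coefficients of powers of $x_3$ in the identity $r = r_1 r_2$.

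In the case $(3, 0)$ the factor $r_2$ is a cubic form in $x_1$ and $x_2$ alone, and comparing the coefficients of $x_3^3$ forces $r_2$ to be a nonzero scalar multiple of $x_1^3$; then $x_1 \mid r$, a contradiction. In the case $(2, 1)$, write $r_1 = Lx_3^2 + Qx_3 + C$ and $r_2 = Q'x_3 + C'$, where $L, Q, Q', C, C'$ are forms in $\C[x_1, x_2]$ of degrees $1, 2, 2, 3, 3$. The coefficient of $x_3^3$ gives $LQ' = x_1^3$, which --- after absorbing a scalar into $r_1$ and $r_2$ --- forces $L = x_1$ and $Q' = x_1^2$; then the coefficient of $x_3^2$ reads $x_1C' + x_1^2Q = x_1^2\big(A_1^2 - B_2\big)$, so $x_1 \mid C'$ and hence $x_1 \mid r_2 \mid r$, again a contradiction. (Equivalently, the $x_3$-free coefficient $CC' = A_3^2 - x_1^2B_4$ together with $x_1 \mid C'$ forces $x_1 \mid A_3$, which is the way the analogous contradiction is phrased in \cref{prop:no-type2}.) In either case $S$ is not a symmetroid.

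I do not expect a genuine obstacle here. The points needing care are the expansion producing $r$ --- whose decisive feature is that its three leading coefficients in $x_3$ are $x_1^3$, $x_1^2(\,\cdot\,)$ and $x_1(\,\cdot\,)$, while the $x_3$-free coefficient $A_3^2 - x_1^2B_4$ is not divisible by $x_1$ --- and the check that $(3, 0)$ and $(2, 1)$ exhaust the $x_3$-degree splittings. This built-in $x_1$-divisibility does all the work, so, unlike in the proof of \cref{prop:no-type2}, neither a chain of substitutions nor an appeal to \cref{lem:not-on-a-conic} is required; a single sentence should also note that a type-$S^{(3)}_4$ surface satisfies the hypotheses of \cref{lem:elliptic-rank2}, as follows from Noether's classification.
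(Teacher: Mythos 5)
Your proof is correct and reaches the contradiction along the same overall route as the paper: compute the ramification sextic, invoke \cref{lem:elliptic-rank2} and \cref{thm:ramification} to split it into two cubics, show that the line $\V(x_1)$ is then forced to be a component of the ramification locus, and conclude via \eqref{eq:type1} that $S$ has a tacnode, contradicting its type. The only step you execute differently is the middle one. The paper argues geometrically: it notes that the sextic has two consecutive triple points at $q \coloneqq [0:0:1]$ with tangent direction $x_1 = 0$, enumerates (rather tersely, and without justification) the ways a union of two cubics can realise such a singularity \dash a triple line, or a line, a conic and the second cubic all sharing that tangent direction \dash and observes that in every case $\V(x_1)$ is a component. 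You instead compare coefficients of powers of $x_3$ in the factorisation $r = r_1 r_2$, exploiting that the coefficients of $x_3^3$, $x_3^2$ and $x_3$ are divisible by $x_1^3$, $x_1^2$ and $x_1$ respectively while the constant term is not divisible by $x_1$ once the tacnodal degeneration is excluded. The two arguments encode the same information \dash the consecutive triple points are precisely this divisibility pattern \dash but your coefficient comparison is more elementary and self-contained, and your case split $(3,0)$ versus $(2,1)$ on $x_3$-degrees is exhaustive and correctly handled in both cases. What you lose is only the geometric picture of the singularity at $q$ that the paper uses to motivate the analysis; what you gain is a proof whose every step is mechanical.
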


\begin{proof}
    We may assume that the equation defining $S$ is as in \eqref{eq:type3}. The ramification locus $R$ for the projection from the elliptic double point~$p$, is then given by
    \begin{align*}
        x_1^3 x_3^3 + x_1^2(A_1^2 - B_2)x_3^2 
        + x_1(2 A_1 A_3  - x_1B_3)x_3 + A_3^2 - x_1^2B_4.
    \end{align*}
    The sextic curve $R$ has two consecutive triple points \dash a triple point with an infinitely near triple point \dash at the point $q \coloneqq [0 : 0 : 1]$, with tangent direction $x_1 = 0$. The tangent direction of $R$ at $q$, corresponds to the double plane which is the tangent cone of $S$ at $p$.

    Assume for contradiction that $S$ is a symmetroid. \cref{lem:elliptic-rank2} and \cref{thm:ramification} imply that $R$ splits into two cubics, $R_1$ and $R_2$. This leaves two possibilities for consecutive triple points. One possibility is that $R_2$ does not pass through $q$, and $R_1$ is a triple line corresponding to the tangent direction at $q$. The second possibility is that $R_1$ breaks up into a line $L$ and a conic $C\mkern-1mu$, such that $L$, $C$ and $R_2$ all have the same tangent direction at $q$. In either case, the line corresponding to the tangent direction at $q$, is a component in $R$.

    Writing the equation for $S$ as $x_1^2x_0^2 + F_3x_0 + F_4$, as in \eqref{eq:quartic-double-point}, then the equation for $R$ becomes $F_3^2 - 4x_1^2F_4$. Since the line $\V(x_1)$ is a component of $R$, then $x_1$ is a factor in $F_3$. By \eqref{eq:type1}, we see that $S$ has a tacnode at $p$. This contradicts the assumption that $S$ is of type $S^{(3)}_4\mkern-4mu$.
\end{proof}

\section{Proof of \texorpdfstring{\cref{thm:main}}{Theorem 1.1}}

\noindent
By Noether's classification in \cite{Noe89}, we know that the only rational quartic surfaces are those with either a double curve, a triple point or an elliptic double point of type $S^{(1)}_4\mkern-4mu$, $S^{(2)}_4$ or $S^{(3)}_4\mkern-4mu$. Since we are dealing with irreducible quartic surfaces, we need only consider double curves of degree up to $3$. \cref{thm:double-conics-have-rank2,prop:no-twisted-cubic,prop:no-type2,prop:no-type3} show that there are no other possible rational quartic symmetroids than the ones listed in the theorem.

The claims about the general number of additional singularities are covered by \hyperref[prop:general-rank2-line]{Propositions} \ref{prop:general-rank2-line}, \ref{prop:general-rank3-line}, \ref{prop:general-conic}, \ref{prop:general-triple-point} and \ref{prop:general-tacnode}. The existence of these types of symmetroids is demonstrated in \hyperref[ex:counterexample-converse]{Examples} \ref{ex:counterexample-converse}, \ref{ex:tacnode}, \ref{ex:rank3-line}, \ref{ex:conic} and \ref{ex:triple-point}.

The dimensions of the different families are calculated in \hyperref[prop:dimension-rank2-line]{Propositions} \ref{prop:dimension-rank2-line}, \ref{thm:dimension-double-line}, \ref{prop:dimension-conic}, \ref{prop:dimension-triple} and \ref{prop:dimension-tacnode}.
\qed

\section{Quartic Symmetroids with Two Double Intersecting Lines}
\label{sec:two-lines}

\noindent
The case of a surface $S \coloneqq \V(F)$ with two double lines, $L_1$ and $L_2$, is an interesting example, so we treat it with special care. Because of the following proposition, we are only interested in $L_1 \cap L_2 \neq \varnothing$:

\begin{proposition}
    \label{prop:skew-lines}
    Let $S \subset \PP^3$ be a quartic surface with two double, skew lines $L_1$ and $L_2$. Then $S$ is not rational.
\end{proposition}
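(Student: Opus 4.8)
Here $S = \V(F)$ has degree $4$. The plan is to show that projecting simultaneously from $L_1$ and from $L_2$ makes $S$ into a $\PP^1$-fibration over a curve $\widetilde C$ of bidegree $(2,2)$ in $\PP^1 \times \PP^1$, so that $S$ is birational to $\widetilde C \times \PP^1$; under the hypotheses this curve turns out to be smooth, hence elliptic, and a surface birational to (elliptic curve) $\times\,\PP^1$ cannot be rational. First I would choose coordinates so that $L_1 = \V(x_2, x_3)$ and $L_2 = \V(x_0, x_1)$; any two skew lines can be put in this position. Singularity of $S$ along $L_1$ forces $F \in (x_2, x_3)^2$, and along $L_2$ forces $F \in (x_0, x_1)^2$; intersecting these ideals in degree $4$ gives
\begin{align*}
    F = Q_0 x_2^2 + Q_1 x_2 x_3 + Q_2 x_3^2, \qquad Q_0, Q_1, Q_2 \in \C[x_0, x_1]_2 .
\end{align*}

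Next I would introduce the rational map
\begin{align*}
    \phi \colon \PP^3 \dashrightarrow \PP^1 \times \PP^1, \qquad [x_0 : x_1 : x_2 : x_3] \mapsto \big( [x_2 : x_3],\, [x_0 : x_1] \big) ,
\end{align*}
the product of the projections from $L_1$ and from $L_2$. The fibre of $\phi$ over $([\lambda:\mu],[a:b])$ is the transversal line $\ell$ joining $[a:b:0:0] \in L_1$ to $[0:0:\lambda:\mu] \in L_2$, and these transversals sweep out $\PP^3 \setminus (L_1 \cup L_2)$. Restricting $F$ to such an $\ell$ and using that each $Q_i$ is a quadratic form, one computes $F|_\ell = t^2 s^2\, G_{[\lambda:\mu]}(a,b)$ with $G_{[\lambda:\mu]} \coloneqq \lambda^2 Q_0 + \lambda\mu Q_1 + \mu^2 Q_2$. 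Hence $\ell$ meets $S$ with multiplicity at least $2$ at each of its intersections with $L_1$ and with $L_2$; since a line not contained in $S$ meets it in at most $4$ points counted with multiplicity, every transversal through a point of $S \setminus (L_1 \cup L_2)$ lies in $S$, and $\ell \subset S$ exactly when $G_{[\lambda:\mu]}(a,b) = 0$. Therefore $\phi$ restricts to a dominant map
\begin{align*}
    S \dashrightarrow \widetilde C \coloneqq \V\big( \lambda^2 Q_0 + \lambda\mu Q_1 + \mu^2 Q_2 \big) \subset \PP^1_{[\lambda:\mu]} \times \PP^1_{[x_0:x_1]} ,
\end{align*}
a curve of bidegree $(2,2)$, whose general fibre is a whole transversal line, hence a $\PP^1$. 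So $S$ is birational to a $\PP^1$-bundle over the normalisation of $\widetilde C$, and $S$ is rational if and only if $\widetilde C$ is.

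It then remains to show $\widetilde C$ is an elliptic curve. It is reduced, for otherwise $\lambda^2 Q_0 + \lambda\mu Q_1 + \mu^2 Q_2 = H^2$ for a bilinear form $H$, which makes $F$ a perfect square and $S$ reducible; it is irreducible, since a factorisation of that form splits $S$ into two scrolls; and it is nonsingular, because a singular point $\xi_0 \in \widetilde C$ produces (via its two branches, or a stationary branch) a family of transversal lines all specialising to $\ell_{\xi_0}$, which forces $S$ to be singular along $\ell_{\xi_0}$ as well — a line distinct from $L_1$ and $L_2$ since it meets both — contradicting the assumption that the double lines of $S$ are exactly $L_1$ and $L_2$. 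A smooth curve of bidegree $(2,2)$ in $\PP^1 \times \PP^1$ is an elliptic curve, having genus $(2-1)(2-1) = 1$ by adjunction. Consequently $S$ is birational to a surface ruled over an elliptic curve, which has irregularity $q = 1 > 0$; since a rational surface has $q = 0$, the surface $S$ is not rational.

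The main obstacle is the last step — turning a singular point of the directrix $\widetilde C$ into an extra double line of $S$ — as this is exactly what excludes the possibility that $\widetilde C$ is a rational nodal or cuspidal curve, in which case $S$ would be rational instead. One can make this explicit: after moving the singular point to $([1:0],[1:0])$, the two tangency conditions defining it force $Q_0 \in (x_1^2)$ and $Q_1 \in (x_1)$, whence $F \in (x_1, x_3)^2$ and $S$ is double along $\V(x_1, x_3)$. Everything else is routine manipulation with the normal form $F = Q_0 x_2^2 + Q_1 x_2 x_3 + Q_2 x_3^2$ together with standard facts about curves on $\PP^1 \times \PP^1$ and the birational invariance of the irregularity.
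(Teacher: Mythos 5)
Your proof is correct and is in essence the same argument as the paper's: both hinge on the B\'ezout observation that a transversal line meeting $L_1$ and $L_2$ and passing through a further point of $S$ meets $S$ in length at least $2+2+1>4$, hence lies in $S$, so that $S$ is the scroll of common transversals of the two lines; the conclusion then follows once the base of the ruling is shown to be an elliptic curve. The difference lies only in how that genus is extracted. The paper cuts with a general plane and reads off the genus of a plane quartic with double points at $H\cap L_1$ and $H\cap L_2$; you write the normal form $F=Q_0x_2^2+Q_1x_2x_3+Q_2x_3^2$ and identify the base of the ruling explicitly as the $(2,2)$ curve $\widetilde{C}\subset\PP^1\times\PP^1$, then apply adjunction. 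Your route is more explicit and yields a clean dictionary between singular points of $\widetilde{C}$ and additional double lines of $S$. Do note, however, that your smoothness step invokes the hypothesis that $L_1$ and $L_2$ are the \emph{only} double lines of $S$, which is not literally in the statement; the paper makes the same tacit assumption when it declares the plane section to have exactly two double points, so this is a shared (and classically intended) reading of the hypothesis rather than a defect of your argument. Your normal form even shows the assumption cannot be dropped: $F=x_1^2x_2^2+x_0x_1x_2x_3+\big(x_0^2+x_1^2\big)x_3^2$ defines an irreducible quartic that is double along $\V(x_2,x_3)$, $\V(x_0,x_1)$ and their common transversal $\V(x_1,x_3)$, and its directrix $\widetilde{C}$ is a nodal $(2,2)$ curve, so that surface is a rational scroll.
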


\begin{proof}
    Let $p \in \PP^3$ be a point outside of $L_1$ and $L_2$. Let $H$ be a plane that does not contain $p$. Let $L'_i$ be the projection of $L_i$ from $p$ to $H$. The lines $L'_1$ and $L'_2$ meet in a point, which corresponds to a line $L$ through $p$ that intersects both $L_1$ and $L_2$. If $p \in S$, then B{\'e}zout's theorem implies that $L \subset S$, since $L_1$ and $L_2$ are skew and double. It follows that $S$ is a scroll.
    
    Let $H$ be a plane that does not contain any of the lines in $S$. Then $H$ intersects each line in $S$ in a point. The hyperplane section $C \coloneqq H \cap S$ is a plane quartic curve with two double points. The curve $C$ has genus $1$, so $S$ has genus $1$. This proves the claim.
\end{proof}

\begin{remark}
    \label{rmk:two-double-lines}    
    For our purposes, we may therefore assume that $L_1 \coloneqq \V(x_1, x_2)$ and $L_2 \coloneqq \V(x_1, x_3)$. Since $S$ is singular along these, the terms in $F$ have either $x_1^2$, $x_1x_2x_3$ or $x_2^2x_3^2$ as a factor. It follows that $F$ satisfies the equation \eqref{eq:type1} of a tacnodal surface. However, a tacnode is defined as an isolated singularity. We may regard $S$ as a degeneration of tacnodal surfaces. In addition, $S$ is a degeneration of surfaces with one double line and a degeneration of surfaces with a double smooth conic section.
\end{remark}

\noindent
Next, we prove similar results to \cref{thm:main} for the different possible ranks:

\begin{proposition}
    Let $S \subset \PP^3$ be a generic, irreducible quartic symmetroid that is singular along two intersecting lines, $L_1$ and $L_2$, of \points{3}. Then $S$ has two isolated \nodes{2}.
\end{proposition}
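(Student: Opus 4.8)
The plan is to mirror the proofs of \cref{lem:tacnode-basepoints} and \cref{prop:general-tacnode}. Following \cref{rmk:two-double-lines}, I would take $p \coloneqq L_1 \cap L_2 = [1 : 0 : 0 : 0]$, $L_1 = \V(x_1, x_2)$ and $L_2 = \V(x_1, x_3)$, so that the plane $\Pi \coloneqq \V(x_1)$ spanned by the two lines is the reduced tangent cone of $S$ at $p$. Since $S$ is double along $L_1$ and $L_2$, the plane section $\Pi \cap S$ contains $2L_1 + 2L_2$ and, having degree~$4$, equals it. Regarding $\Pi$ as a net $N \subset W(S)$ of quadrics, the discriminant of $N$ is then the pair of double lines $2L_1 + 2L_2$; and since $L_1$ and $L_2$ consist of \points{3}, the general quadric in each of the two pencils $P_1, P_2 \subset N$ cut out by $L_1$ and $L_2$ has rank~$3$. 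From this point the proof of \cref{lem:tacnode-basepoints} applies verbatim: each $P_i$ is a pencil of cones with a common apex $q_i$, which by \cref{lem:singularity-is-a-base-point} is a base point of $W(S)$; every quadric of $P_i$ passes through the other base point $q_{3-i}$ and hence contains the line $\overline{q_1q_2}$, so $\overline{q_1q_2}$ lies in the base locus of $N$; and adjoining a quadric outside $N$ shows that $W(S)$ has exactly the two base points $q_1, q_2$. I would also note that $p$ is the common point $P_1 \cap P_2$, so the associated quadric at $p$ is singular at both $q_1$ and $q_2$ and therefore has rank at most~$2$, and that $q_1 \neq q_2$, since otherwise every quadric of $N$ would be singular at the common base point, forcing $\Pi \subset \Sing(S)$ by \cref{lem:singular-at-base-point} against the irreducibility of~$S$.

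Next I would pass to the $\PP^7$ of quadrics through $q_1$ and $q_2$, whose \locus{2} is $X_1 \cup X_2$, with $X_1$ the fourfold of degree~$4$ of unions $H_{12} \cup H$ (here $H_{12}$ is a plane through $q_1$ and $q_2$, and $H$ is an arbitrary plane) and $X_2$ the fourfold of degree~$6$ of unions $H_1 \cup H_2$ with $q_i \in H_i$ \dash the two components already met in \cref{prop:general-tacnode}. The \points{2} of $S$ then form the finite set $W(S) \cap (X_1 \cup X_2)$ \dash finite because the only curves of low rank in $W(S)$ are $L_1$ and $L_2$, which are generically of rank~$3$ \dash and this set splits into the \points{2} lying on $L_1 \cup L_2$ and the isolated ones we must count. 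By the first alternative of \cref{lem:rank3-pencil}, each pencil $P_i$ carries a length-$3$ scheme of \points{2} on $L_i$, and these two schemes share the point $p$; a short check shows that each such \point{2} is a union of two planes through $q_i$, one of which also contains $q_{3-i}$, so it lies in $X_1 \cap X_2$ and contributes to the intersection of $W(S)$ with \emph{both} $X_1$ and $X_2$.

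Finally I would run the intersection-theoretic count of \cref{prop:general-tacnode}, resolving the non-normal fourfolds $X_1$ and $X_2$ by the Segre varieties $\Sigma_{3,1} \subset \PP^7$ and $\Sigma_{2,2} \subset \PP^8$ to obtain, by B{\'e}zout's theorem, four and six intersection points of the lifted section of $W(S)$ with the respective Segre variety, and then subtracting the contributions lying over $L_1 \cup L_2$ and over $p$. The hard part is this last accounting. Because $S$ is now singular along the whole curve $L_1 \cup L_2$, rather than at an isolated tacnode, the web $W(S)$ meets $X_1 \cup X_2$ with multiplicity greater than one at each \point{2} on $L_1 \cup L_2$; and at $p$, where the two pencils, the two double lines and both components $X_1, X_2$ all come together, the local analysis is the most delicate. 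One must show that these contributions use up all but two of the available intersection points, leaving exactly two isolated \points{2}, which are nodes by the usual genericity. As a consistency check, \cref{rmk:two-double-lines} realises $S$ as a degeneration of tacnodal symmetroids: among the six additional nodes of \cref{prop:general-tacnode}, the two pairs lying on the two tangent lines at the tacnode should be absorbed into $L_1$ and $L_2$, leaving $6 - 4 = 2$ isolated nodes.
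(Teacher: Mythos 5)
Your strategy is the paper's: reduce to the tacnodal normal form via \cref{rmk:two-double-lines}, rerun \cref{lem:tacnode-basepoints} and \cref{prop:general-tacnode} to obtain two base points $q_1, q_2$ and a \locus{2} of length~$6$ outside $p \coloneqq L_1 \cap L_2$, and then account for how much of that length is absorbed by $L_1 \cup L_2$. But that accounting is precisely the step you leave undone \dash you call it ``the hard part'' and reduce it to ``one must show\ldots'' \dash and it is the entire content of the proposition; the degeneration picture at the end is only a plausibility check, not an argument. The paper completes the count with two multiplicity statements. First, $p$ appears with multiplicity~$2$ in the length-$3$ \locus{2} of each pencil $P_i$ given by \cref{lem:rank3-pencil}, so each $L_i$ carries exactly \emph{one} further \point{2}~$p_i$. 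Your phrasing (``these two schemes share the point $p$'') leaves open that $p$ has multiplicity~$1$ in each, which would give two further \points{2} per line and, combined with the next statement, make the count overflow ($4 \times 2 > 6$); so this is not a detail one may skip. Second, each $p_i$ is counted with multiplicity~$2$ in $W(S) \cap X_2$: this is the tangent-space argument from the proof of \cref{prop:general-rank3-line}, since $p_i$ lies in the locus of quadrics singular at the common apex $q_i$ of $P_i$, where the tangent space of the \locus{2} jumps in dimension, so that $W(S)$ meets it in a line. Together these give $6 - 2 - 2 = 2$. Your observation that the \points{2} on $L_i$ are unions of two planes through $q_i$, one of which contains $q_{3-i}$, and hence lie on $X_1 \cap X_2$, is correct and could be developed into an alternative derivation of the second multiplicity statement; but as written neither multiplicity is established, and the conclusion does not follow.
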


\begin{proof}
    By \cref{lem:singularity-is-a-base-point,lem:curve-in-base-locus}, the quadrics along each line $L_i$ have a common singular point. \cref{lem:rank3-pencil} states that the \locus{2} along each line has length~$3$.

    As noted in \cref{rmk:two-double-lines}, the equation for $S$ satisfies \eqref{eq:type1}. It follows that all the arguments in the proof of \cref{prop:general-tacnode} hold here as well, showing that the \locus{2} is of length~$6$ outside of $p \coloneqq L_1 \cap L_2$. Moreover, $p$ is counted with multiplicity~$2$ in each of the \loci{2} of the pencils defined by the $L_i$. Hence each $L_i$ contains only one additional \point{2}~$p_i$.
    
    The proof of \cref{prop:general-rank3-line} shows that $p_i$ is counted with multiplicity~$2$ in the \locus{2} of $S$. Thus the \locus{2} has length~$2$ outside of $L_1$ and $L_2$, which proves the claim.
\end{proof}

\begin{proposition}
    Let $S \subset \PP^3$ be a generic, irreducible quartic symmetroid with two intersecting lines, $L_1$ and $L_2$, of \points{2}. Then $S$ has four additional, isolated \points{2}.
\end{proposition}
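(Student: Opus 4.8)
The plan is to adapt the base-locus argument behind \cref{lem:rank2-line-implies-base-points,lem:base-points-imply-rank2-line}: each of $L_1,L_2$ is a pencil of \quadrics{2}, and reading off the two base loci pins down $W(S)$ closely enough that its remaining \points{2} can be counted inside the projective space of quadrics through its base points.

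First I would set up the geometry. Let $P_1,P_2\subset W(S)$ be the pencils of \quadrics{2} corresponding to $L_1$ and $L_2$, and put $[Q]\coloneqq L_1\cap L_2$, so $Q\in P_1\cap P_2$. Since a general $S$ has no \points{1}, \cref{lem:rank2-pencil} applies to each $P_i$: its base locus is $H_i\cup l_i$, where $H_i$ is a plane contained in every quadric of $P_i$ and $l_i\not\subset H_i$ is a line; equivalently, every member of $P_i$ is the union of $H_i$ with a plane through $l_i$. The \quadric{2} $Q$ is a union of two planes, each of which must contain both $H_1$ and $H_2$; hence either $H_1=H_2$ or $Q=H_1\cup H_2$. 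If $H_1=H_2$, this common plane lies in the base locus of the net $N\coloneqq\langle P_1,P_2\rangle$, so $W(S)$ has a conic in its base locus and $S$ is reducible by \cref{lem:curve-in-base-locus}, which is excluded. Therefore $H_1\neq H_2$ and $Q=H_1\cup H_2$; and since $H_2$ occurs among the planes making up $P_1$, we get $l_1\subset H_2$, and symmetrically $l_2\subset H_1$.

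The base locus of $N$ is the intersection of the base loci of $P_1$ and $P_2$, namely $M\cup l_1\cup l_2$ with $M\coloneqq H_1\cap H_2$; here $M$ meets each of $l_1,l_2$, while $l_1$ and $l_2$ are skew. Writing $W(S)=\langle N,K\rangle$, a general quadric $K$ meets each of these three lines in two points, so $W(S)$ has six base points — two on $M$, two on $l_1$, two on $l_2$ — with no three collinear. The four on $M\cup l_1$ are coplanar, as $M\cup l_1\subset H_2$, so I would pass to the $\PP^5$ of quadrics through those four points, in which $W(S)$ is a linear $\PP^3$, and invoke the description of the \locus{2} from the proof of \cref{lem:base-points-imply-rank2-line}: it equals $X\cup X_{12}\cup X_{13}\cup X_{14}$, where $X$ is the $\PP^3$ of unions of $H_2$ with an arbitrary plane, $X_{12}$ the quadratic surface of unions of a plane through $M$ with a plane through $l_1$, and $X_{13},X_{14}$ the quadratic surfaces coming from the other two pairings of the four points. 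Every \point{2} of $S$ lies in one of these four sets.

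The core of the argument is to intersect $W(S)$ with these four pieces, and the recurring device is that every quadric in $W(S)$ passes through the two base points on $l_2$, which do not lie on $H_2$. Thus a quadric in $W(S)$ containing $H_2$ splits as $H_2$ together with a plane through those two points, hence a plane through $l_2$, so it lies in $P_2$; this gives $W(S)\cap X=L_2$. The same device applied to $X_{12}$ forces $H_1$ to be one of the two planes, giving $W(S)\cap X_{12}=L_1$; applied to $X_{13}$ and $X_{14}$ — after parametrising the relevant quadrics by the two planes through the two prescribed lines — it cuts each $\PP^1\times\PP^1$ down to two points. These four quadrics contain neither $H_1$ nor $H_2$, so they are \points{2} on $S$ lying off $L_1\cup L_2$, and as \points{2} of a quartic symmetroid they are nodes. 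Since $X\cup X_{12}\cup X_{13}\cup X_{14}$ has been exhausted, there are no other \points{2}, which is the assertion. I expect the delicate step to be this last one — checking that the two points cut out on each of $X_{13},X_{14}$ are reduced, that all four are distinct and miss $L_1\cup L_2$, and that these quadrics genuinely lie in $W(S)$, equivalently that the six base points impose independent conditions on quadrics because they do not lie on a conic — all of which, as in the earlier propositions, rests on the generality of $S$ (that is, of $K$).
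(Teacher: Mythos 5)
Your proposal is correct, and its first half \dash using \cref{lem:rank2-pencil} to get base loci $H_i \cup l_i$, deducing $Q = H_1 \cup H_2$, $l_1 \subset H_2$, $l_2 \subset H_1$, and hence six base points distributed two apiece on $l_1$, $l_2$ and $M = H_1 \cap H_2$ \dash is exactly the paper's setup. Where you diverge is the counting step. The paper simply observes that $S$ is determined by its six base points, writes down explicit coordinates for one such configuration and the $(4 \times 4)$-matrix parametrising the quadrics through it, and verifies the count of four isolated \points{2} on that representative. You instead work synthetically inside the $\PP^5$ of quadrics through the four coplanar points on $M \cup l_1$, reuse the decomposition $X \cup X_{12} \cup X_{13} \cup X_{14}$ of its \locus{2} from \cref{lem:base-points-imply-rank2-line}, and let the two base points on $l_2$ (which lie off $H_2$) force $W(S) \cap X = L_2$, $W(S) \cap X_{12} = L_1$, and two reduced points on each of $X_{13}$, $X_{14}$; I checked the case analysis (e.g.\ a plane through $M$ and a point of $l_2$ is necessarily $H_1$, and no plane contains both of the skew lines $l_1$, $l_2$) and it closes correctly. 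Your route is longer but more informative: it exhibits the four extra nodes as the pairings $X_{13}$, $X_{14}$ and makes the genericity transparent, whereas the paper's computation is shorter but leaves the reader to accept that one representative suffices. One phrasing slip \dash ``a union of two planes, each of which must contain both $H_1$ and $H_2$'' should read that $Q$ itself contains both $H_1$ and $H_2$ \dash and your final criterion for independence of the six conditions is stated loosely, but the paper asserts the same independence without proof, so neither issue is a gap relative to the published argument.
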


\begin{proof}
    Let $P_i$ be the pencil of quadrics that corresponds to the line $L_i$. By \cref{lem:rank2-pencil}, the base locus of $P_i$ consists of a plane~$H_i$ and a line~$l_i \not \subset H_i$. The associated quadric at the point~$L_1 \cap L_2$ is $H_1 \cup H_2$. It follows that $l_1 \subset H_2$ and $l_2 \subset H_1$. Thus the base locus of the net~$N$ spanned by $P_1$ and $P_2$, consists of the three lines $l_1$, $l_2$ and $H_1 \cap H_2$. The web $W(S)$ is the net~$N$ extended with another quadric~$Q \notin N\mkern-2mu$. Then $Q$ intersects the lines $l_1$, $l_2$ and $H_1 \cap H_2$ in two points each. Hence the base locus of $W(S)$ consists of six points. The symmetroid~$S$ is therefore uniquely determined by the base locus of $W(S)$.

    Choose coordinates such that $H_1 \coloneqq \V(x_2)$, $H_2 \coloneqq \V(x_3)$, $l_1 \coloneqq \V(x_0, x_3)$ and $l_2 \coloneqq \V(x_1, x_2)$. We can assume that the six base points are
    \begin{align*}
    [0 : 1 : 1 : 0], [0 :  1 : -1 :  0] &\in l_1,
    \\
    [1 : 0 : 0 : 1], [1 :  0 :  0 : -1] &\in l_2,
    \\
    [1 : 1 : 0 : 0], [1 : -1 :  0 :  0] &\in H_1 \cap H_2. 
    \end{align*}
    The quadrics passing through these base points are parametrised by the matrix
    \begin{align*}
        M
        \coloneqq
        \begin{bmatrix}
            x_{00} & \phantom{-}0      & x_{02} & \phantom{-}0      \\
              0    &           -x_{00} &   0    & \phantom{-}x_{13} \\
            x_{02} & \phantom{-}0      & x_{00} & \phantom{-}x_{23} \\
              0    & \phantom{-}x_{13} & x_{23} &           -x_{00}
        \end{bmatrix}
        \mkern-7mu.
    \end{align*}
    The symmetroid defined by $M$ has four isolated \points{2} in addition to the lines $L_1$ and $L_2$.
\end{proof}

\begin{proposition}
    \label{prop:one-of-each}
    Let $S \subset \PP^3$ be a generic, irreducible quartic symmetroid that is singular along two intersecting lines, $L_1$ and $L_2$. Suppose that the points along $L_1$ are generically \points{3}, and that $L_2$ consists of \points{2}. Then $S$ has two isolated \nodes{2}.
\end{proposition}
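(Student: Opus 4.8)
The plan is to run the same machine already used in this section for two lines of \points{3} and for two lines of \points{2}: reduce $S$ to a web $W(S)$ of quadrics through a fixed finite base scheme, and then count the isolated \points{2} as an intersection of $W(S)$ with the \locus{2} of the ambient linear system of quadrics.

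First I would identify the two pencils attached to $L_1$ and $L_2$. The line $L_1$ corresponds to a pencil $P_1$ whose general member has rank~$3$; by \cref{lem:singularity-is-a-base-point,lem:curve-in-base-locus} the quadrics in $P_1$ must share a single singular point~$q_1$ \dash otherwise their singular points sweep out a line inside the base locus of $W(S)$, and then \cref{lem:curve-in-base-locus} makes $S$ reducible \dash and \cref{lem:rank3-pencil} shows that the \locus{2} along $L_1$ is a scheme of length~$3$ containing the quadric $Q_p$ at $p \coloneqq L_1 \cap L_2$. The line $L_2$ corresponds to a pencil $P_2$ of \quadrics{2}; by \cref{lem:rank2-pencil} its base locus is a plane $\Pi$ together with a line $l \not\subset \Pi$, and $Q_p$, being a rank-$2$ member of $P_2$, is the union of $\Pi$ with a plane through $l$, its singular line passing through $q_1$. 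Computing the base locus of the net $N$ spanned by $P_1$ and $P_2$ then yields two lines through $q_1$ in the plane~$\Pi$, plus finitely many points; adjoining one further quadric to pass from $N$ to $W(S)$ cuts this down to four base points, coplanar in~$\Pi$ and in general position. I would also record, via \cref{rmk:two-double-lines}, that the equation of $S$ has the tacnodal form~\eqref{eq:type1}.

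Next I would pass to the $\PP^5$ of quadrics through those four coplanar base points and use the description of its \locus{2} from the proof of \cref{lem:base-points-imply-rank2-line}: the $\PP^3$ of unions $\Pi \cup H'$, together with three quadric surfaces $X_{12}$, $X_{13}$, $X_{14}$. Inside this $\PP^5$ the web $W(S)$ meets the $\PP^3$ in the rank-$2$ line $L_2$ itself, which contains $p$, so every \emph{isolated} \point{2} must lie on $X_{12} \cup X_{13} \cup X_{14}$, a scheme that $W(S)$ meets in length~$6$. Two of these six points are the \points{2} of the length-$3$ scheme on $L_1$ other than~$p$; exactly as in the proof of \cref{prop:general-rank3-line}, the tangent space to the \locus{2} jumps at each of them, $W(S)$ meets that tangent space in a line, so each is counted with multiplicity~$2$. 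This uses up $4$ of the $6$ and leaves $6 - 4 = 2$ isolated \nodes{2}, which is the claim. As a check, this is consistent with \cref{lem:base-points-imply-rank2-line}: a generic web with four coplanar base points carries a \locus{2} made of a line and six isolated \points{2}, and here four of the six have coalesced in pairs onto the additional double line $L_1$.

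The step I expect to be the real obstacle is the multiplicity bookkeeping at $p$. One must verify that $p$ occurs with multiplicity~$1$ \dash not multiplicity~$2$, as it does when $L_2$ is also a line of \points{3} \dash in the length-$3$ scheme on $L_1$; the reason is that here $Q_p$ is merely an ordinary rank-$2$ member of $P_2$, whose singular line is general through $q_1$, rather than a quadric singular along the join of two apexes. One must also exclude any further coincidences, so that $p$ \dash which lies on the rank-$2$ line $L_2$ \dash supplies no isolated \point{2}, and the two extra \points{2} on $L_1$ absorb exactly $4$ of the $6$. If a clean conceptual argument for this turns out to be awkward, the natural fallback \dash and the one matching the preceding proposition in this section \dash is an explicit matrix computation in coordinates adapted to $L_1$ and $L_2$.
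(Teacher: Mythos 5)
Your argument is essentially the paper's own proof, unpacked: the paper simply cites \cref{prop:general-rank2-line} for the length-$6$ \locus{2} outside $L_2$ and the proof of \cref{prop:general-rank3-line} for the multiplicity-$2$ count at the two \points{2} on $L_1 \setminus L_2$, yielding $6 - 4 = 2$, exactly as you do. The genericity worry you flag at the end (that $p$ contributes only simply to the length-$3$ scheme on $L_1$, so that there really are two further \points{2} on $L_1 \setminus L_2$) is left implicit in the paper as well, and your reasoning for it is sound.
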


\begin{proof}
    By \cref{prop:general-rank2-line}, the \locus{2} of $S$ has length~$6$ outside of $L_2$. As in the proof of \cref{prop:general-rank3-line}, the two \points{2} in $L_1 \setminus L_2$ are counted with multiplicity~$2$ each in the \locus{2}. Hence there are two \points{2} outside of $L_1$ and $L_2$.
\end{proof}

\noindent
In the case of \cref{prop:one-of-each}, the rank along the lines depends on the matrix representation~\eqref{eq:matrix-representation}:

\begin{example}
    \label{ex:2-representations}
    The symmetroid~$S$ defined by the matrix
    \begin{align*}
        A_1
        \coloneqq
        \begin{bmatrix}
              0  &     x_0     & \phantom{-}{4x_1} & \phantom{-}2x_2   \\
             x_0 &    4x_3     & 2x_1 - 2x_3       & \phantom{-}0      \\
            4x_1 & 2x_1 - 2x_3 & -4x_1             & \phantom{2}{-x_2} \\
            2x_2 &      0      & \phantom{4}{-x_2} & \phantom{-2}x_3
       \end{bmatrix}        
    \end{align*}
    is singular along two lines, $L_1$ and $L_2$, and it has four isolated nodes, $p_1$, $p_2$, $p_3$ and~$p_4$. Only $L_1$, $p_1$, $p_2$ and two points on $L_2 \setminus L_1$ are contained in the \locus{2} of~$S$.

    The matrix
    \begin{align*}
        A_2
        \coloneqq
        \begin{bmatrix}
                0      & \phantom{-4}x_0 - 8x_3
                       & 4x_3                   & \phantom{-}2x_2 \\
            x_0 - 8x_3 & \phantom{-}4x_1 + 8x_3
                       & -2x_1 - 6x_3           & -2x_2           \\
              4x_3     & -2x_1 - 6x_3
                       & 4x_3                   & \phantom{-2}x_2 \\
              2x_2     & -2x_2
                       & \phantom{4}x_2         & \phantom{2}{-x_1}
        \end{bmatrix}
    \end{align*}
    has the same determinant as $A_1$. The matrices are not conjugates of each other, which can be verified by evaluating $A_1$ and $A_2$ at a point and check that they have different eigenvalues. With the representation given by $A_2$, the \locus{2} of~$S$ equals $L_2$, $p_1$, $p_3$ and two points on $L_1 \setminus L_2$.
\end{example}

\section{Examples}
\label{sec:examples}

\noindent We end with a few more examples, including demonstrations showing that Pl{\"u}cker's surface and the Steiner surface are symmetroids.

\begin{example}
    \label{ex:rank3-line}
    The matrix
    \begin{align*}
    \setlength{\arraycolsep}{6pt}
    \begin{bmatrix}
               0        & x_0 + x_1 & x_0 + x_1 & x_0 + x_2 + x_3 \\
           x_0 + x_1    &    x_3    &     0     &       x_0       \\
           x_0 + x_1    &     0     &    x_1    &       x_2       \\
        x_0 + x_2 + x_3 &    x_0    &    x_2    &       x_2
    \end{bmatrix}
    \end{align*}
    defines a generic quartic symmetroid with a double line of \points{3}. The symmetroid has three \points{2} on the double line and four outside of the line.
\end{example}

\begin{example}[Pl{\"u}cker's surface]
    \label{ex:plucker}
    The maximal number of isolated nodes on a quartic surface with a double line is eight. The surface $S$ satisfying that description is known as \emph{Pl{\"u}cker's surface} in the classical literature \cite[Article~83]{Jes16}. It is represented as a symmetroid by the matrix
    \begin{align*}
    \setlength{\arraycolsep}{6pt}
    \begin{bmatrix}
               0        & x_0 - x_1 + x_2 & x_0 - x_1 + x_3 & x_0 \\
        x_0 - x_1 + x_2 &        0        &       x_3       & x_1 \\
        x_0 - x_1 + x_3 &       x_3       &        0        & x_2 \\
              x_0       &       x_1       &       x_2       & 0
    \end{bmatrix}
    \mkern-7mu.
    \end{align*}
    The double line and six of the nodes are contained in the \locus{2}. 

    In \cite[Remark~5.4]{Ott+14}, Kummer surfaces are given the following interpretation: They are the nodal symmetroids where the associated web of quadrics contains a net that defines a twisted cubic curve. Hence the Kummer surfaces have six general base points. The base locus of $W(S)$ consists of six points, four of which are coplanar. Pl{\"u}cker's surface is therefore a degeneration of Kummer surfaces. 
\end{example}

\begin{example}
    \label{ex:conic}
    The matrix
    \begin{align*}
    \setlength{\arraycolsep}{6pt}
    \begin{bmatrix}
            0     &       x_0       &       x_1       & x_0 + x_2 \\
           x_0    &        0        & x_1 + x_2 + x_3 &    x_3    \\
           x_1    & x_1 + x_2 + x_3 &        0        &    x_0    \\
        x_0 + x_2 &       x_3       &       x_0       &     0
    \end{bmatrix}
    \end{align*}
    defines a symmetroid that is double along a smooth conic section and has four isolated \points{2}.
\end{example}

\begin{example}
    \label{ex:triple-point}
    The symmetroid defined by
    \begin{align*}
    \setlength{\arraycolsep}{6pt}
    \begin{bmatrix}
        x_0 & x_1 & x_2 &  0  \\
        x_1 & x_3 &  0  & x_2 \\
        x_2 &  0  & x_3 & x_1 \\
         0  & x_2 & x_1 & x_3
    \end{bmatrix}
    \end{align*}
    is an example of a generic symmetroid with a triple point. It has in addition six \points{2}.
\end{example}

\begin{example}
    \label{ex:tacnode}
    Let $S \subset \PP^3$ be a quartic symmetroid with a tacnode at the point $p \coloneqq [1 : 0 : 0 : 0]$. By \cref{lem:elliptic-rank2}, we may assume that the matrix defining $S$ is on the form
    \begin{align*}
        A
        \coloneqq
        \begin{bmatrix}
            a_0x_0 + l_{00} &      l_{01}     & l_{02} & l_{03} \\
            l_{01}     & a_1x_0 +  l_{11} & l_{12} & l_{13} \\
            l_{02}     &      l_{12}      & l_{22} & l_{23} \\
            l_{03}     &      l_{13}      & l_{23} & l_{33}
        \end{bmatrix}
        \mkern-7mu,
    \end{align*}
    where $a_0, a_1 \in \C \setminus \{0\}$ and the $l_{ij}$ are linear forms in $\C[x_1, x_2, x_3]$. The tangent cone of $S$ at $p$ is given by
    \begin{align}
        \label{eq:tangent-cone}
        \begin{vmatrix}
            l_{22} & l_{23} \\
            l_{23} & l_{33}
        \end{vmatrix}
        =
        l_{22}l_{33} - l_{23}^2.
    \end{align}
    Since $p$ is an elliptic double point, \eqref{eq:tangent-cone} is a square. If $l_{22}$, $l_{23}$ and $l_{33}$ are scalar multiples of each other, not all zero, then $\det(A)$ is on the form \eqref{eq:type1}. In general, $A$ does not define a tacnode for other ways of realising \eqref{eq:tangent-cone} as a square.

    Concretely, the matrix
    \begin{align*}
        \begin{bmatrix}
            x_0 + x_1 & \phantom{-}x_2 & x_3 & x_1 \\
            x_2       &     -x_0 + x_1 & x_2 & x_2 \\
            x_3       & \phantom{-}x_2 & x_1 &  0  \\
            x_1       & \phantom{-}x_2 &  0  & x_1
        \end{bmatrix}
    \end{align*}
    defines a symmetroid with a tacnode and six additional nodes.
\end{example}

\begin{example}[Steiner surface]
    The Veronese surface $V \subset \PP^5$ can be identified with the \locus{1} of the symmetroid $S$ defined by the matrix
    \begin{align*}
    \setlength{\arraycolsep}{6pt}
    \begin{bmatrix}
        x_{00} & x_{01} & x_{02} \\
        x_{01} & x_{11} & x_{12} \\
        x_{02} & x_{12} & x_{22}
    \end{bmatrix}
    \mkern-7mu.
    \end{align*}
    Moreover, $S$ is the secant variety of $V$ \cite[Section~2.1.1]{Dol12}.
    
    The general projection $R$ of $V$ to $\PP^3$ is known as the \emph{Roman surface} or the \emph{Steiner surface}. It is an irreducible surface with three concurrent double lines, $L_1$, $L_2$ and $L_3$. They meet in a triple point $p$. After a suitable choice of coordinates, we may assume that $R$ is given by \cite[Equation~(2.1)]{Dol12}, which is
    \begin{align*}
        x_0x_1x_2x_3 + x_0^2x_1^2 + x_0^2x_2^2 + x_1^2x_2^2 = 0.
    \end{align*}
    The determinant of the matrix
    \begin{align*}
    \setlength{\arraycolsep}{6pt}
    A
    \coloneqq
    \begin{bmatrix}
    0   &     x_0     &   x_1 &             x_0           \\
    x_0 &      0      & -2x_2 &         4x_0 + 2x_2       \\
    x_1 &   -2x_2     &    0  &            2x_1           \\
    x_0 & 4x_0 + 2x_2 &  2x_1 &  6x_0 + 2x_1 + 2x_2 - x_3
    \end{bmatrix}
    \end{align*}
    is
    \begin{align*}
    \det(A) = 4 \big( x_0x_1x_2x_3 + x_0^2x_1^2 + x_0^2x_2^2 + x_1^2x_2^2 \big)
    \mkern-3mu,
    \end{align*}
    so the Steiner surface is a symmetroid. The triple point $p$ is a \point{1}. There is also a single \point{2}, with multiplicity $2$, on each line $L_i$.

    The base locus of $W(R)$ is a scheme of length $6$. It consists of three points and a direction through each. As with Pl{\"u}cker's surface in \cref{ex:plucker}, the Steiner surface is a degeneration of Kummer surfaces.
\end{example}

\begin{figure}[htbp]
    \centering
    \includegraphics[height = 0.28\textheight]{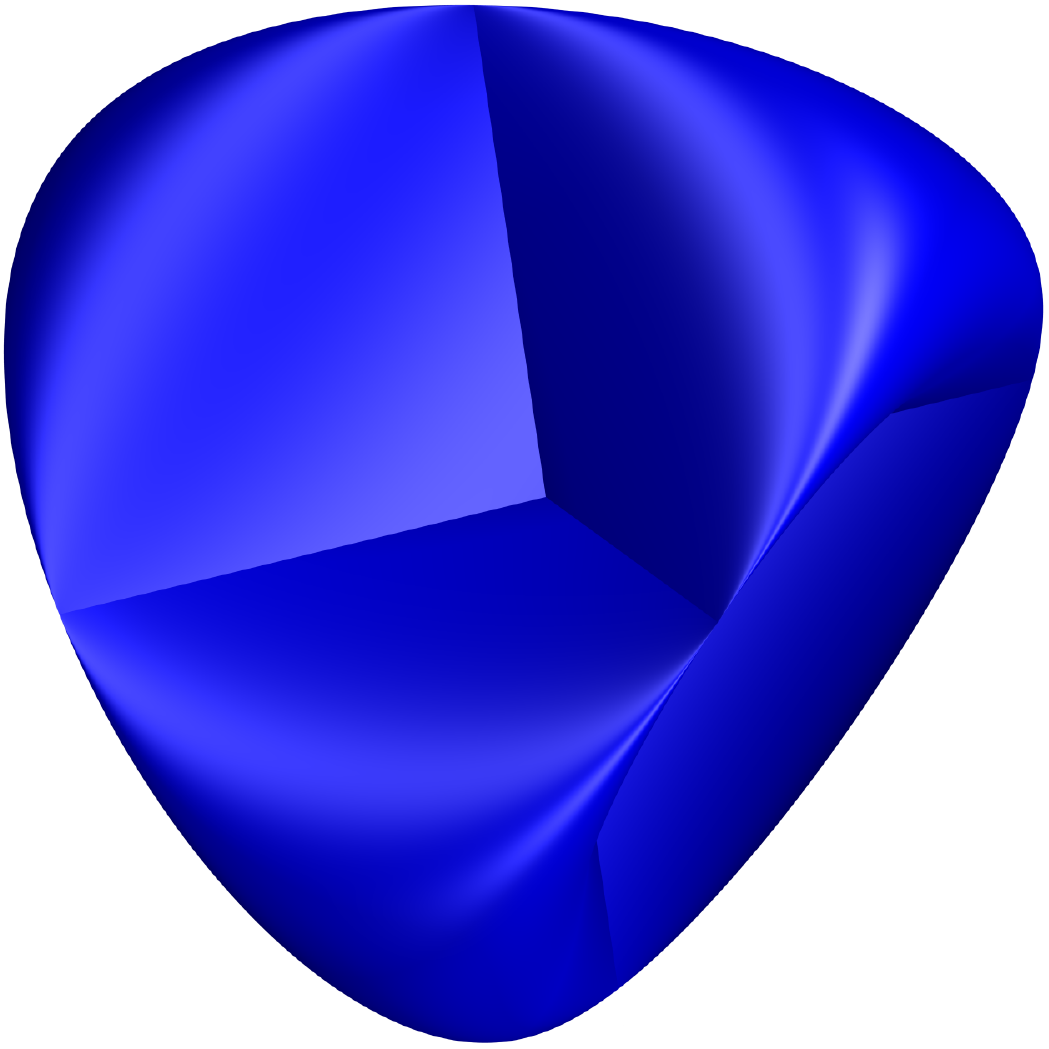}
    \caption{The Steiner surface.}
\end{figure}

\begin{example}
    The symmetroid $S$ defined by
    \begin{align*}
    \setlength{\arraycolsep}{6pt}
    \begin{bmatrix}
    0  &     x_0    &     x_1    & x_0 \\
    x_0 &      0     & 2x_2 - x_3 & x_2 \\ 
    x_1 & 2x_2 - x_3 &      0     & x_0 \\
    x_0 &     x_2    &     x_0    & x_3
    \end{bmatrix}
    \end{align*}
    is double along a smooth conic section $C$ and a line $L$. The \locus{2} consists of~$C$ and a point $p \in L \setminus C\mkern-1mu$. The base locus of $W(S)$ is a scheme of length $4$ with support in three points.
\end{example}

\begin{acknowledgements}
    I would like to thank Kristian Ranestad for insightful discussions and for supervising the master's thesis of which this paper is both an abridgement and an extension.
\end{acknowledgements}

{
    \pretolerance = 1275
    \printbibliography
}

\end{document}